\documentclass[10pt,reqno]{amsart}
\usepackage[margin=0.8in]{geometry}
\usepackage{amsthm, amsmath,amsfonts,amssymb,euscript,hyperref,graphics,color,slashed}
\usepackage{graphicx}
\usepackage{mathrsfs}
\usepackage{comment}
\usepackage{latexsym}
\usepackage[makeroom]{cancel}

\def\inte#1{
\displaystyle\mathop{#1\kern0pt}^\circ }



\let\d=\delta
\let\e=\varepsilon
\let\z=\zeta
\let\lam=\lambda

\let\f=\frac

\let\p=\psi

\let\D=\Delta
\let\Lam=\Lambda

\let\wt=\widetilde



\def\virgp{\raise 2pt\hbox{,}}
\def\cdotpv{\raise 2pt\hbox{;}}

\def\eqdefa{\buildrel\hbox{\footnotesize def}\over =}

\def\C{\mathop{\mathbb C\kern 0pt}\nolimits}
\def\DD{\mathop{\mathbb D\kern 0pt}\nolimits}
\def\EE{\mathop{{\mathbb E \kern 0pt}}\nolimits}
\def\K{\mathop{\mathbb K\kern 0pt}\nolimits}
\def\N{\mathop{\mathbb N\kern 0pt}\nolimits}
\def\Q{\mathop{\mathbb Q\kern 0pt}\nolimits}
\def\R{\mathop{\mathbb R\kern 0pt}\nolimits}
\def\SS{\mathop{\mathbb S\kern 0pt}\nolimits}
\def\ZZ{\mathop{\mathbb Z\kern 0pt}\nolimits}
\def\TT{\mathop{\mathbb T\kern 0pt}\nolimits}
\def\P{\mathop{\mathbb P\kern 0pt}\nolimits}

\newcommand{\Z}{{\ZZ}}

\def\curl{\mathop{\rm curl}\nolimits}

\def\na{\nabla}
\def\p{\partial}

\newcommand{\beq}{\begin{equation}}
\newcommand{\eeq}{\end{equation}}
\newcommand{\ben}{\begin{eqnarray}}
\newcommand{\een}{\end{eqnarray}}
\newcommand{\beno}{\begin{eqnarray*}}
\newcommand{\eeno}{\end{eqnarray*}}


\newcommand{\vv}[1]{\boldsymbol{#1}}

\def\curl{\text{curl}\,}

\newtheorem*{Main Theorem}{Main Theorem}
\newtheorem{theorem}{Theorem}[section]
\newtheorem{lemma}{Lemma}[section]
\newtheorem{proposition}{Proposition}[section]
\newtheorem{corollary}{Corollary}[section]
\newtheorem{definition}{Definition}[section]
\newtheorem{remark}{Remark}[section]

\setlength{\textwidth}{16cm} \setlength{\oddsidemargin}{0cm}
\setlength{\evensidemargin}{0cm}

\numberwithin{equation}{section}

\begin{document}
\title[Well-posedness]{ Long time existence for the Boussinesq-Full dispersion systems}

\author{Jean-Claude Saut}
\address{Laboratoire de Math\' ematiques, UMR 8628\\
Universit\' e Paris-Saclay, Paris-Sud et CNRS\\ 91405 Orsay, France}
\email{jean-claude.saut@u-psud.fr}

\author[Li XU]{Li Xu}
\address{School of Mathematics and Systems Science, Beihang University\\  100191 Beijing, China}
\email{xuliice@buaa.edu.cn}

\date{September16, 2019}
\maketitle

\vspace{1cm}
 \textit{Abstract}.

 We establish the long time existence of solutions  for the "Boussinesq-Full dispersion" systems modeling the propagation of
internal waves in a two-layer system. For the two-dimensional Hamiltonian case $b=d>0,\,a\leq0,\,c<0$, we study the global existence of small solutions of the corresponding system. \\

Keywords : Internal waves. Boussinesq-Full dispersion systems. Long time existence.

\tableofcontents

\setcounter{equation}{0}
\section{Introduction}
This paper is concerned with a class of asymptotic models of internal waves arising in the so-called two-layer system.
This idealized system,
when it is at rest, consists of a homogeneous fluid of depth $d_1$
and density $\rho_1$ lying over another homogeneous fluid of depth
$d_2$ and density $\rho_2 > \rho_1$.  The bottom on which both
fluids rest is presumed to be horizontal and featureless while the
top of fluid 1 is restricted by the rigid lid assumption, which is
to say, the top is viewed as an impenetrable, bounding surface. Both of these
require that the deviation of the interface be a graph over the flat
bottom, actually parametrized by a scalar function $\zeta$, see Figure below.

    \includegraphics[width=12cm]{./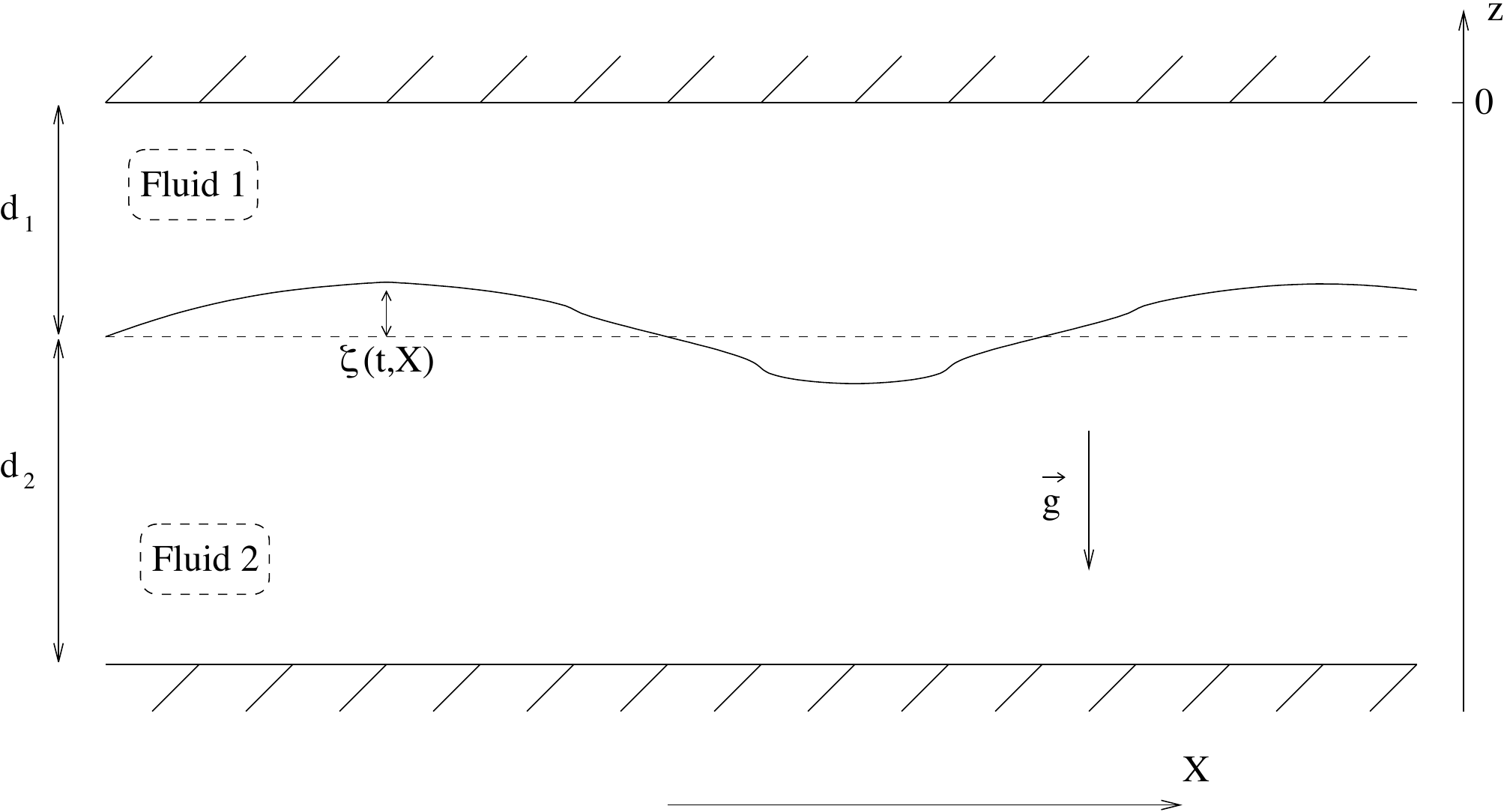}

The mathematical analysis of the full two-layer system displays tremendous difficulties due in particular to the possible appearance of Kelvin-Helmholtz instabilities.  We refer to \cite{Lannes2} for a deep analysis and far reaching results.

By expanding two non-local operators and for various ranges of parameters related to wave lengths, amplitudes, depths, densities,.., asymptotic models were rigorously (in the sense of consistency)  derived in \cite{CGK} and  \cite{BLS}. We will follow  the framework of \cite{BLS}.

More precisely, denoting $\rho_1,\rho_2$ the densities, $a$ a typical amplitude of the wave and $\lambda$ a typical wave length in the horizontal direction,  we define the dimensionless parameters

$$
    \gamma:=\frac{\rho_1}{\rho_2},\quad
    \delta:=\frac{d_1}{d_2},\quad
    \epsilon:=\frac{a}{d_1},\quad
    \mu:=\frac{d_1^2}{\lambda^2}.
$$
Though they are redundant, it is also notationally convenient to
introduce two other parameter`s $\epsilon_2$ and $\mu_2$ defined as
$$
    \epsilon_2=\frac{a}{d_2}=\epsilon\delta,\qquad
    \mu_2=\frac{d_2^2}{\lambda^2}=\frac{\mu}{\delta^2}.
$$
\begin{remark}
    The parameters $\epsilon_2$ and $\mu_2$ correspond to $\epsilon$ and
    $\mu$ with $d_2$ rather than $d_1$
    taken as the unit of length in the vertical direction.
\end{remark}

The Boussinesq-Full dispersion regime corresponds to $\mu\sim \epsilon\ll 1, \mu_2\sim 1$ so that the wave amplitude is small only with respect to the upper layer.

It is shown in \cite{BLS}    that in this Boussinesq-Full dispersion regime and in the absence of surface tension, the
two-layers system  is consistent with the \emph{three-parameter family} of Boussinesq/FD systems
\begin{equation}\label{eqB-FD}
\left\lbrace
\begin{array}{l}
(1-b\mu\Delta)\partial_t \zeta+
                  \frac{1}{\gamma}\nabla\cdot\big((1-\epsilon\zeta){\vv v}_\beta\big)\vspace{1mm}\\
     \indent-\frac{\sqrt{\mu}}{\gamma^2}|D|\coth(\sqrt{\mu_2}|D|)\nabla\cdot{\vv v}_\beta
+\frac{\mu}{\gamma}\Big(a-\frac{1}{\gamma^2}\coth^2(\sqrt{\mu_2}|D|)\Big)\Delta\nabla\cdot{\vv
v}_\beta = 0\vspace{1mm}\\
                (1-d\mu\Delta)\partial_t{\vv v}_\beta +(1-\gamma)\nabla\zeta-\frac{\epsilon}{2\gamma}\nabla(\vert{\vv v}_\beta\vert^2)
    +c\mu(1-\gamma)\Delta\nabla\zeta=0,
            \end{array}\right.
\end{equation}
where $\zeta$ is the elevation of the wave, $\gamma\in(0,1)$, ${\vv v}_\beta=(1-\beta\mu\Delta)^{-1}{\vv v}$ ($\vv v$ being the horizontal velocity) and the
constants $a$, $b$, $c$ and $d$ are defined as
    $$
    a=\frac{1}{3}(1-\alpha_1-3\beta),\qquad
    b=\frac{1}{3}\alpha_1,\qquad
    c=\beta\alpha_2,\qquad
    d=\beta(1-\alpha_2),
    $$
    with $\alpha_1\geq 0$, $\beta\geq 0$ and $\alpha_2\leq 1$.

    Note that the parameters $a,b,c,d$ are constrained by the relation $a+b+c+d=\frac{1}{3}.$


The initial condition for \eqref{eqB-FD} is imposed as follows
\beq\label{initial data}
\z|_{t=0}=\z_0,\quad{\vv v}_\beta|_{t=0}=\vv v_0,
\eeq

It is easily checked that \eqref{eqB-FD} is linearly well posed when
\beq\label{sorts of parameters}
a\leq 0, c\leq 0, b\geq 0, d\geq 0.
\eeq

The local well-posedness of the Cauchy problem for  \eqref{eqB-FD} was established in \cite{CTA2} in the following cases

\vspace{0.3cm}
 \begin{itemize}
\item[(1)]  $b>0,d>0, a\leq 0, c<0$;
\item[(2)]   $b>0 ,d>0, a\leq 0, c=0$;
\item[(3)]  $b=0, d>0, a\leq 0, c=0$;
\item[(4)] $b=0, d>0, a\leq 0, c<0$;
\item[(5)]  $b>0, d=0, a\leq 0, c=0$,
\end{itemize}

It turns out that \eqref{eqB-FD} is hamiltonian when $b=d.$ This fact has been used in \cite{AS} in the one dimensional Hamiltonian case to establish the global existence of small solutions,  by an easy extension of a similar result for the Boussinesq systems in \cite{BCS2}. We will go back to this issue for the two-dimensional Hamiltonian systems at the end of the paper.

The existence of one dimensional solitary waves for the Boussinesq -Full Dispersion systems in the Hamiltonian case was  proven in \cite{AS}. No such result  seems to be known in the non-hamiltonian case.

In the present paper we will prove the {\it long time existence} for \eqref{eqB-FD}-\eqref{initial data} that is existence on time scales of order $1/\epsilon$ for all cases stated in \eqref{sorts of parameters}. This time scale is the one on which the Boussinesq-Full Dispersion systems are "good" approximations of the two-layer system in the relevant regime.

Similar results for the "abcd" class of Boussinesq systems were established in \cite{MSZ,SX,SWX,Bu,Bu2}. As in \cite{SX, SWX} the proof of our main result is based on the derivation of a suitable symmetrizer.

 In the two-dimensional Hamiltonian case
$b=d>0,\,a\leq0,\,c<0$, we shall moreover establish the global existence of small solutions  of  \eqref{eqB-FD}-\eqref{initial data} when $\epsilon =1.$ This is as far as we know the first global existence result for this type of systems in the two-dimensional case. Similar results in the non-Hamiltonian case are not known, even in the one-dimensional case.

Before presenting the main results of this paper, we give the following definition of the functional spaces that will be used :
\begin{definition}\label{def1}
For any $s\in\R$, $k\in\N$, $\mu\in(0,1)$, the Banach space $X_{\mu^k}^{s}(\R^n)$ is defined as $H^{s+k}(\R^n)$ equipped with the norm:
\beno
\|u\|_{X_{\mu^k}^{s}}^2=\|u\|_{H^s}^2+\mu^k\|\na^ku\|_{H^{s}}^2.
\eeno
\end{definition}

The solutions to the Cauchy problem of \eqref{eqB-FD} will belong to some space $X_{\mu^k}^{s}(\R^n)\times X_{\mu^{k'}}^{s}(\R^n)$ with $k$ and $k'$ determined by $a,b,c,d$ as follows :

\begin{definition}\label{def2}
For any $a,b,c,d$ satisfying \eqref{sorts of parameters},
 we define a pair of numbers $(k,k')\eqdefa(k(a,b,c,d),k'(a,b,c,d) )$  according to the admissible sets of $(a,b,c,d)'s$ as follows:

 \vspace{0.3cm}
 \begin{itemize}
\item[(1)] $(k,k')=(3,3)$ for $b\neq d,\, b,d>0,\, a\leq 0,\,c<0$;
\item[(2)] $(k,k')=(2,2)$ for $b=d>0,\, a\leq0,\,c<0$ or $b>0,\, d=0,\, a\leq0, c=0$;
\item[(3)] $(k,k')=(4,3)$ for $b>0,\,d=0,\, a\leq 0,\,c<0$;
\item[(4)] $(k,k')=(1,2)$ for $b>0,\,d>0,\, a\leq0,\, c=0$;
\item[(5)] $(k,k')=(3,4)$ for $b=0,\, d>0,\, a\leq0,\,c<0$;
\item[(6)] $(k,k')=(1,3)$ for $b=0,\,d>0, a\leq0,\, c=0$;
\item[(7)] $(k,k')=(1,1)$ for $b=d=0,\,a\leq0,\,c<0$;
\item[(8)] $(k,k')=(0,1)$ for $b=d=0, a\leq0,c=0$.
\end{itemize}
\end{definition}

\begin{remark}
The cases (7) and (8) cannot occur for internal waves without surface tension but may occur for internal waves with a sufficiently large surface tension parameter.
\end{remark}

We now state the main results of this paper. The first theorem  concerns the long time existence for \eqref{eqB-FD}-\eqref{initial data}.
\begin{theorem}\label{Long time existence for B-FD}
Let $t_0>\f{n}{2}$, $n=1,2$, $s\geq t_0+2$  and  $a,b,c,d$ satisfy the condition \eqref{sorts of parameters}.
Assume that $\z_0\in X^s_{\mu^k}(\R^n),\vv v_0\in X^s_{\mu^{k'}}(\R^n)$ satisfy the (non-cavitation) condition
\beq\label{non-cavi condition}
1-\e\z_0\geq H>0,\quad H\in(0,1),
\eeq
where $(k,k')$ is defined in Definition \ref{def2}.
Then there exist positive constants $\wt{\epsilon}$ and $\wt{\mu}$ (maybe depending on $\|\z_0\|_{X^s_{\mu^{k}}}+\|\vv v_0\|_{X^s_{\mu^{k'}}}$ and $H$),  such that for any $\epsilon\leq\wt{\epsilon}$ and $\mu\leq\wt{\mu}$, there
exists $T>0$ independent of $\epsilon$ and $\mu$, such that \eqref{eqB-FD}-\eqref{initial data} has a unique solution $(\z,{\vv v}_\beta)$ with
$(\z,{\vv v}_\beta)\in C([0,T/\epsilon];X^s_{\mu^k}(\R^n)\times X^s_{\mu^{k'}}(\R^n))$.
Moreover,
\beq\label{long time estimate}
\max_{t\in[0,T/\epsilon]} (\|\z\|_{X^s_{\mu^k}}+\|{\vv v}_\beta\|_{X^s_{\mu^{k'}}})\leq \tilde c (\|\z_0\|_{X^s_{\mu^k}}+\|\vv v_0\|_{X^s_{\mu^{k'}}}).
\eeq
Here  $\tilde c=C(H^{-1})$ are nondecreasing functions of their argument.
\end{theorem}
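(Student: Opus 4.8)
The plan is to deduce the solution on $[0,T/\epsilon]$ from a single a priori estimate that is uniform in $(\epsilon,\mu)$, combined with the local theory of \cite{CTA2} and a continuity argument; following the strategy announced after \eqref{sorts of parameters}, the whole scheme rests on a symmetrizer adapted to \eqref{eqB-FD}. First I would rewrite the system as
\[
\begin{gathered}
(1-b\mu\Delta)\partial_t\z+\kappa(D)\,\nabla\!\cdot\vv v_\beta=\tfrac{\epsilon}{\gamma}\nabla\!\cdot(\z\,\vv v_\beta),\\
(1-d\mu\Delta)\partial_t\vv v_\beta+\lambda(D)\nabla\z=\tfrac{\epsilon}{2\gamma}\nabla(|\vv v_\beta|^2),
\end{gathered}
\]
where $\lambda(D)=(1-\gamma)(1+c\mu\Delta)$ and $\kappa(D)=\frac1\gamma-\frac{\sqrt\mu}{\gamma^2}|D|\coth(\sqrt{\mu_2}|D|)+\frac{\mu}{\gamma}\big(a-\frac1{\gamma^2}\coth^2(\sqrt{\mu_2}|D|)\big)\Delta$ collects the dispersive part of the first equation. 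The first fact to record is that under \eqref{sorts of parameters} both Fourier symbols are positive: $\lambda(\xi)=(1-\gamma)(1+|c|\mu|\xi|^2)\ge 1-\gamma$, while, writing $\theta=\sqrt{\mu_2}|\xi|$ and completing the square,
\[
\kappa(\xi)=\tfrac1\gamma\Big(1-\tfrac{\delta\,\theta\coth\theta}{2\gamma}\Big)^2+\tfrac{3\delta^2\theta^2\coth^2\theta}{4\gamma^3}-\tfrac{\mu a}{\gamma}|\xi|^2\ \ge\ \tfrac{3}{4\gamma}>0,
\]
so that $\kappa$ is bounded below by a positive constant and grows like $\mu|\xi|^2$. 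These signs, together with the elliptic factors $1-b\mu\Delta,\,1-d\mu\Delta$, are exactly what dictate the exponents $(k,k')$ of Definition \ref{def2}.

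The energy is then built from these symbols and the non-cavitation coefficient. For the Hamiltonian case $b=d>0$, where $(k,k')=(2,2)$, the model functional for $U=(\z,\vv v_\beta)$ is
\[
E^s(U)=\sum_{|\alpha|\le s}\int\Big[\lambda(D)(1-b\mu\Delta)\,|\partial^\alpha\z|^2+\big(\kappa(D)-\tfrac{\epsilon\z}{\gamma}\big)(1-d\mu\Delta)\,\partial^\alpha\vv v_\beta\cdot\partial^\alpha\vv v_\beta\Big]\,dx.
\]
Since $\lambda(\xi)(1+b\mu|\xi|^2)\sim 1+\mu^2|\xi|^4$ and $\kappa(\xi)(1+d\mu|\xi|^2)\sim 1+\mu^2|\xi|^4$, and since for $\epsilon\le\wt{\epsilon}$ small the perturbation $-\frac{\epsilon\z}{\gamma}$ keeps the second weight a positive operator (using \eqref{non-cavi condition}), $E^s(U)$ is equivalent, uniformly in $(\epsilon,\mu)$, to $\|\z\|_{X^s_{\mu^2}}^2+\|\vv v_\beta\|_{X^s_{\mu^2}}^2$. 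The reason for inserting the variable weight $\kappa(D)-\frac{\epsilon\z}{\gamma}$ rather than $\kappa(D)$ is that it symmetrizes the quasilinear coupling: the coefficient of $\nabla\!\cdot\vv v_\beta$ in the first equation is precisely $\frac{1-\epsilon\z}{\gamma}+(\kappa(D)-\frac1\gamma)$, and when $\frac{d}{dt}E^s$ is computed the top-order coupling collapses to $-\frac{2}{\gamma}\int(1-\epsilon\z)\,\nabla\!\cdot\big(\lambda(D)\partial^\alpha\z\,\partial^\alpha\vv v_\beta\big)\,dx$, i.e.\ to $+\frac{2}{\gamma}\int\nabla(1-\epsilon\z)\cdot\lambda(D)\partial^\alpha\z\,\partial^\alpha\vv v_\beta\,dx$, which carries the gain $\nabla(1-\epsilon\z)=-\epsilon\nabla\z$.

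Differentiating $E^s$ and pairing $\partial^\alpha$ of the two equations against $\lambda(D)\partial^\alpha\z$ and $(\kappa(D)-\frac{\epsilon\z}{\gamma})\partial^\alpha\vv v_\beta$ respectively, the purely dispersive (constant-coefficient) part of the coupling cancels exactly as in the linear computation, and what remains is a sum of terms each carrying a prefactor $\epsilon$: the residual coupling above; the contribution of $\partial_t(1-\epsilon\z)=-\epsilon\partial_t\z$ from differentiating the weight; the commutators $[\partial^\alpha,\z]$ and $[\kappa(D),\z]$; and the genuinely nonlinear terms from $\nabla\!\cdot(\z\vv v_\beta)$ and $\nabla(|\vv v_\beta|^2)$. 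The point of the construction is that each of these is bounded by $\epsilon\,C(E^s)\,E^s$, the extra two derivatives of room being supplied by the hypothesis $s\ge t_0+2$ and the $X^s_{\mu^k}$ weights. I expect the main obstacle to be precisely the verification of this bound uniformly in $\mu$: one needs product and commutator estimates of Kato–Ponce/Moser type in the spaces $X^s_{\mu^k}$ that lose no power of $\mu$, which is delicate both for the nonlocal operator $\coth(\sqrt{\mu_2}|D|)$ and for the $\mu$-weighted high-order multipliers, since a naive interpolation between the two pieces of the $X^s_{\mu^k}$ norm already costs $\mu^{-1/2}$. The same difficulty governs the remaining cases of Definition \ref{def2}, where the elliptic factors $1-b\mu\Delta,\,1-d\mu\Delta$ must be replaced by weights engineered so that the energy is simultaneously equivalent to $X^s_{\mu^k}\times X^s_{\mu^{k'}}$ and still closes; determining these weights case by case is the core of the work.

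Granting the estimate $\frac{d}{dt}E^s(U)\le \epsilon\,C(E^s(U))$, a standard continuity argument started from a local solution (constructed as in \cite{CTA2}, or by Friedrichs regularization of the symmetrized system, the a priori bound furnishing a $\mu$-independent existence time) keeps $E^s(U(t))$ comparable to $E^s(U(0))$ on a time interval of length $T/\epsilon$ with $T$ depending only on the data and, through \eqref{non-cavi condition}, on $H$; this is exactly \eqref{long time estimate}. Uniqueness and continuity in time with values in $X^s_{\mu^k}\times X^s_{\mu^{k'}}$ then follow by running the same energy estimate on the difference of two solutions at one order lower, as usual.
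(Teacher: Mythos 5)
Your overall strategy --- symmetrize, build an energy equivalent to $X^s_{\mu^k}\times X^s_{\mu^{k'}}$, prove $\f{d}{dt}E^s\lesssim\epsilon\,C(E^s)E^s$, and close with a continuity argument --- is exactly the paper's, and several of your observations (positivity of the dispersive symbols, the role of the weight $\kappa(D)-\f{\epsilon\z}{\gamma}$ in cancelling the $\z$--$\vv v$ quadratic coupling, the equivalence $E^s\sim\|\z\|_{X^s_{\mu^2}}^2+\|\vv v\|_{X^s_{\mu^2}}^2$ for $b=d$) are correct. But there is a genuine gap: your energy functional is only the \emph{diagonal} part of the required symmetrizer, and with it the key estimate does not close. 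The quadratic term $\f{\epsilon}{2\gamma}\na(|\vv v|^2)$ in the second equation produces, after pairing with $(\kappa(D)-\f{\epsilon\z}{\gamma})\pa^\al\vv v$, top-order contributions of the form $\epsilon\int \kappa(D)\pa^\al v^j\,v^k\pa_j\pa^\al v^k\,dx$; the part of this that is symmetric in $(j,k)$ integrates by parts into controlled terms, but the antisymmetric part (already visible at the level of the constant part of $\kappa$) leaves an uncancelled term of the type $\epsilon\int(\vv v\cdot\pa^\al\vv v)(\na\cdot\pa^\al\vv v)\,dx$, which costs a full extra derivative, i.e.\ a factor $\mu^{-1/2}$ after interpolation in $X^s_{\mu^k}$. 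That loss is fatal for the statement: it yields a lifespan $O(\sqrt\mu/\epsilon)$ rather than $O(1/\epsilon)$, and the theorem does not assume $\mu\sim\epsilon$. This is not the ``delicate commutator estimate'' you flag as the main obstacle --- the commutator lemmas used in the paper lose no powers of $\mu$ --- but a structural defect of the energy itself.

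The fix, which is the paper's construction \eqref{S for b=d>0}, is to add the off-diagonal entries $-\epsilon v^1,-\epsilon v^2$ to the symmetrizer, i.e.\ to augment your $E^s$ by the cross term $-2\epsilon\bigl((1-b\mu\D)\Lam^s\z\,|\,\vv v\cdot\Lam^s\vv v\bigr)_2$. When this term is differentiated in time, the substitution $(1-b\mu\D)\pa_t\z=-\f1\gamma\na\cdot\bigl((A(D)-\epsilon\z)\vv v\bigr)$ produces exactly $+\f{2\epsilon}{\gamma}\int A(D)\na\cdot\Lam^s\vv v\,(\vv v\cdot\Lam^s\vv v)\,dx$, which cancels the problematic term above; this is the content of the computations for $a_{22},a_{23},a_{32}$ in Section 3, where $a_{23}^*+a_{32}$ is shown to be of lower order precisely because the $(2,1)$ and $(3,1)$ entries of $S_{\vv V}(D)$ combine with the $(1,2)$ and $(1,3)$ entries of $M(\vv V,D)$. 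For $b\neq d$ the symmetrizer must further carry the factors $g(D)=(1-b\mu\D)(1-d\mu\D)^{-1}$ and an $\epsilon^2(g(D)-1)$ correction block, as in \eqref{S for b neq d}; this case-by-case construction, which you defer, is indeed the core of the proof, but without at least the off-diagonal entries your argument fails already in the Hamiltonian case you treat in detail.
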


\begin{remark}
In Theorem \ref{Long time existence for B-FD}, we only use the condition $\mu\ll1,\,\epsilon\ll1,\,\mu_2\sim1$. We do not need the restriction $\mu\sim\epsilon$.
\end{remark}

The second theorem is about the global existence for \eqref{eqB-FD}-\eqref{initial data}  in the Hamiltonian case $b=d>0$, $a\leq0,\, c<0$.

\begin{theorem}\label{Global existence for B-FD in case b=d}
Let $b=d>0$, $a\leq0,\, c<0$. Assume that $\z_0\in X^0_{\mu}(\R^2),\vv v_0\in X^0_{\mu}(\R^2)$. Then there exist a positive constant $\epsilon_0$(maybe depending on $\|\z_0\|_{X^s_{\mu^{k}}}+\|\vv v_0\|_{X^s_{\mu^{k'}}}$),  such that for any $\epsilon\leq\epsilon_0$ and $\mu\sim\epsilon$, \eqref{eqB-FD}-\eqref{initial data} has a unique solution $(\z,{\vv v}_\beta)$ with
$(\z,{\vv v}_\beta)\in C([0,\infty);X^0_{\mu}(\R^2)\times X^0_{\mu}(\R^2))$.
Moreover,
\beq\label{global estimate}
\max_{t\in[0,\infty)} (\|\z\|_{X^0_{\mu}}+\|{\vv v}_\beta\|_{X^0_{\mu}})\leq C (\|\z_0\|_{X^0_{\mu}}+\|\vv v_0\|_{X^0_{\mu}}).
\eeq
Here  $C$ is a universal constant which may change from line to line.
\end{theorem}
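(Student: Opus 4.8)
The plan is to exploit the Hamiltonian structure that is available precisely because $b=d$. Writing $U=(\zeta,\vv v_\beta)$ and $P=1-b\mu\Delta$, all linear spatial operators in \eqref{eqB-FD} are scalar Fourier multipliers, so the system takes the form
\[
P\partial_t\zeta=-\nabla\cdot\big(S(D)\vv v_\beta\big)+\tfrac{\epsilon}{\gamma}\nabla\cdot(\zeta\vv v_\beta),\qquad
P\partial_t\vv v_\beta=-\nabla\big(T(D)\zeta\big)+\tfrac{\epsilon}{2\gamma}\nabla|\vv v_\beta|^2,
\]
\[
T(\xi)=(1-\gamma)\big(1+|c|\mu|\xi|^2\big),\qquad
S(\xi)=\frac1\gamma-\frac{\sqrt\mu}{\gamma^2}|\xi|\coth(\sqrt{\mu_2}|\xi|)+\frac{\mu|a|}{\gamma}|\xi|^2+\frac{\mu}{\gamma^3}\coth^2(\sqrt{\mu_2}|\xi|)|\xi|^2.
\]
Because the \emph{same} operator $P$ multiplies both time derivatives (this is exactly where $b=d$ is used), the system is of the form $P\partial_tU=\cJ\,\mathrm{grad}\,\cH$ with the skew-adjoint $\cJ=\big(\begin{smallmatrix}0&-\nabla\cdot\\-\nabla&0\end{smallmatrix}\big)$, and since $P^{-1}$ is a scalar multiplier commuting with $\cJ$, a direct integration by parts shows that
\[
\cH(\zeta,\vv v_\beta)=\frac12\int_{\R^2}\big(\zeta\,T(D)\zeta+\vv v_\beta\cdot S(D)\vv v_\beta\big)\,dx-\frac{\epsilon}{2\gamma}\int_{\R^2}\zeta\,|\vv v_\beta|^2\,dx=:\cH_2+\cH_3
\]
is conserved along the flow. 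First I would record this conservation law and the splitting into quadratic and cubic parts.

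The second step is the coercivity $\cH_2\sim\|\zeta\|_{X^0_\mu}^2+\|\vv v_\beta\|_{X^0_\mu}^2=:\|U\|_{X^0_\mu}^2$. For the $\zeta$ part this is immediate from $c<0$ and $\gamma\in(0,1)$, since $T(\xi)\sim 1+\mu|\xi|^2$. The delicate part is the nonlocal symbol $S$: I would drop the (nonnegative) term $\frac{\mu|a|}{\gamma}|\xi|^2$ and complete the square, writing $S(\xi)\ge\frac1\gamma\big(1-z+z^2\big)$ with $z:=\frac{\sqrt\mu}{\gamma}|\xi|\coth(\sqrt{\mu_2}|\xi|)$. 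Then $1-z+z^2\ge\tfrac34$ gives a uniform lower bound, while $1-z+z^2\ge\tfrac12 z^2$ together with $z^2=\frac{\mu}{\gamma^2}|\xi|^2\coth^2(\sqrt{\mu_2}|\xi|)\ge\frac{\mu}{\gamma^2}|\xi|^2$ gives the quadratic growth; hence $S(\xi)\gtrsim 1+\mu|\xi|^2$. The matching upper bound follows from $\sqrt{\mu_2}|\xi|\coth(\sqrt{\mu_2}|\xi|)\lesssim 1+\sqrt{\mu_2}|\xi|$ and $\mu/\mu_2=\delta^2\lesssim 1$. This is the place where the regime $\mu_2\sim1$ (equivalently $\delta\sim\sqrt\mu$) is essential.

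The third step controls the cubic term in the energy-critical two-dimensional setting. Here $H^1\not\hookrightarrow L^\infty$, so I would not estimate $\|\zeta\|_{L^\infty}$; instead, by Hölder and the two-dimensional Gagliardo–Nirenberg inequality $\|\vv v_\beta\|_{L^4}^2\lesssim\|\vv v_\beta\|_{L^2}\|\nabla\vv v_\beta\|_{L^2}$,
\[
|\cH_3|\lesssim\epsilon\,\|\zeta\|_{L^2}\|\vv v_\beta\|_{L^4}^2\lesssim\epsilon\,\|\zeta\|_{L^2}\|\vv v_\beta\|_{L^2}\|\nabla\vv v_\beta\|_{L^2}\lesssim\epsilon\mu^{-1/2}\|U\|_{X^0_\mu}^3\sim\epsilon^{1/2}\|U\|_{X^0_\mu}^3,
\]
where the gradient costs the factor $\mu^{-1/2}$ and the balance $\mu\sim\epsilon$ converts it into the small gain $\epsilon^{1/2}$. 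This yields the a priori bound by a continuity argument: assuming $\|U(t)\|_{X^0_\mu}\le 2C_0\|U_0\|_{X^0_\mu}$ on a maximal interval, conservation of $\cH$ and coercivity give $c_1\|U(t)\|_{X^0_\mu}^2\le\cH_2(U(t))=\cH(U_0)-\cH_3(U(t))\lesssim\|U_0\|_{X^0_\mu}^2+\epsilon^{1/2}\|U_0\|_{X^0_\mu}\,\|U(t)\|_{X^0_\mu}^2$, and for $\epsilon^{1/2}\|U_0\|_{X^0_\mu}$ small (hence $\epsilon_0$ depending on the data norm, as stated) the last term is absorbed, improving the bootstrap constant to $C_0$ and giving \eqref{global estimate}.

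Finally, I would combine this uniform bound with local well-posedness in $X^0_\mu(\R^2)$: for fixed $\mu>0$ the operator $P^{-1}$ gains two derivatives, so after inverting $P$ the system is semilinear with a quadratic nonlinearity that maps $X^0_\mu\times X^0_\mu$ continuously into itself (again via $H^1\hookrightarrow L^4$ in two dimensions), whence Picard iteration gives a unique local solution on a time interval depending only on $\mu$ and $\|U_0\|_{X^0_\mu}$. Since the conserved energy keeps $\|U(t)\|_{X^0_\mu}$ uniformly bounded, the local existence time stays bounded below and a standard continuation argument extends the solution to $[0,\infty)$. I expect the main obstacle to be the coercivity step: establishing positivity and the sharp two-sided bound $S(\xi)\sim 1+\mu|\xi|^2$ for the nonlocal $\coth$-symbol, uniformly as $\mu\to0$ with $\mu_2\sim1$, and then closing the bootstrap in the two-dimensional critical case, which succeeds only because of the precise scaling $\mu\sim\epsilon$ producing the gain $\epsilon^{1/2}$.
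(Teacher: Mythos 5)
Your proposal is correct and follows essentially the same route as the paper's proof: conservation of the Hamiltonian, coercivity of its quadratic part, control of the cubic term by Ladyzhenskaya's inequality with the gain $\epsilon\mu^{-1/2}\sim\epsilon^{1/2}$ under $\mu\sim\epsilon$, a bootstrap to keep the Hamiltonian coercive, and local well-posedness in $X^0_\mu(\R^2)$ plus a continuation/blow-up argument. The only differences are implementational: the paper performs the local theory by diagonalizing into the variables $W,Z_\pm$ and applying Duhamel rather than inverting $1-b\mu\Delta$ directly, and it writes the dispersive multiplier in a form in which the quadratic part of the Hamiltonian is manifestly a sum of nonnegative terms, so your completion of the square (which correctly handles the sign appearing in \eqref{eqB-FD}) is not needed there.
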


\begin{corollary}
Theorem \ref{Global existence for B-FD in case b=d} is in fact a global existence result for small solutions of  \eqref{eqB-FD} with $\epsilon\sim \mu\sim 1$ when $b=d>0$, $a\leq0,\, c<0$.
Actually one reduces to this modified system with $\epsilon=\mu =1$ by the change of variables

$$\zeta (t,X)=\epsilon^{-1}\tilde\zeta({\sqrt\mu}^{-1}t,\sqrt{\mu}^{-1}X),\quad {\vv v}_\beta(t,X)
=\epsilon^{-1}\tilde{{\vv v}}_\beta({\sqrt\mu}^{-1}t,\sqrt{\mu}^{-1}X),\; X=(x,y).$$

\end{corollary}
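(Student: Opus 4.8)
The statement is a pure scaling reduction, so the plan is to check that the displayed change of variables sets up a bijection between solutions of \eqref{eqB-FD} with amplitude $\epsilon$ and shallowness $\mu$ and solutions of the \emph{same} system with the normalized values $\tilde\epsilon=\tilde\mu=1$, all structural constants $a,b=d,c,\gamma$ being left untouched. Writing $s=\mu^{-1/2}t$ and $Y=\mu^{-1/2}X$, one substitutes $\zeta(t,X)=\epsilon^{-1}\tilde\zeta(s,Y)$, ${\vv v}_\beta(t,X)=\epsilon^{-1}\tilde{\vv v}_\beta(s,Y)$ and records the elementary rules $\partial_t=\mu^{-1/2}\partial_s$, $\nabla_X=\mu^{-1/2}\nabla_Y$, $\Delta_X=\mu^{-1}\Delta_Y$ and $|D_X|=\mu^{-1/2}|D_Y|$.

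The one step that is not purely mechanical — and the point on which the whole corollary hinges — is the behaviour of the full-dispersion operators $|D|\coth(\sqrt{\mu_2}|D|)$ and $\coth^2(\sqrt{\mu_2}|D|)$. The amplitude rescaling $\zeta\mapsto\epsilon^{-1}\tilde\zeta$ does not touch $\mu_2$, while the spatial rescaling gives $\sqrt{\mu_2}\,|D_X|=\sqrt{\mu_2/\mu}\,|D_Y|=\delta^{-1}|D_Y|$, using $\mu_2=\mu/\delta^2$. Thus $\sqrt{\mu_2}|D_X|$ is scale invariant: it equals $\sqrt{\tilde\mu_2}|D_Y|$ with $\tilde\mu_2=\mu_2/\mu=1/\delta^2=\tilde\mu/\delta^2$, a fixed constant of the geometry. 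Consequently $\coth(\sqrt{\mu_2}|D_X|)$ and its square reproduce themselves as $\coth(\sqrt{\tilde\mu_2}|D_Y|)$ and $\coth^2(\sqrt{\tilde\mu_2}|D_Y|)$, now read at $\tilde\mu=1$. Feeding the derivative rules into \eqref{eqB-FD} term by term, every term of each equation acquires the common prefactor $\epsilon^{-1}\mu^{-1/2}$: the Boussinesq operators $(1-b\mu\Delta_X)$, $(1-d\mu\Delta_X)$ collapse to $(1-b\Delta_Y)$, $(1-d\Delta_Y)$, and the quadratic terms $\nabla_X\cdot((1-\epsilon\zeta){\vv v}_\beta)$ and $\tfrac{\epsilon}{2\gamma}\nabla_X(|{\vv v}_\beta|^2)$ become $\nabla_Y\cdot((1-\tilde\zeta)\tilde{\vv v}_\beta)$ and $\tfrac{1}{2\gamma}\nabla_Y(|\tilde{\vv v}_\beta|^2)$. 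Dividing out the common factor, $(\tilde\zeta,\tilde{\vv v}_\beta)$ solves \eqref{eqB-FD} at $\tilde\epsilon=\tilde\mu=1$.

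Next I would do the norm bookkeeping. For $u\in\{\zeta,{\vv v}_\beta\}$ the change of variable $Y=\mu^{-1/2}X$ in $n=2$ gives $\|u\|_{L^2}^2=\epsilon^{-2}\mu\,\|\tilde u\|_{L^2}^2$ and $\|\nabla_X u\|_{L^2}^2=\epsilon^{-2}\|\nabla_Y\tilde u\|_{L^2}^2$, so that $\|u\|_{X^0_\mu}^2=\|u\|_{L^2}^2+\mu\|\nabla_X u\|_{L^2}^2=\epsilon^{-2}\mu\,\|\tilde u\|_{X^0_1}^2$, that is $\|u\|_{X^0_\mu}=\epsilon^{-1}\sqrt\mu\,\|\tilde u\|_{X^0_1}$, at every time (the interval being preserved, $t\in[0,\infty)\Leftrightarrow s\in[0,\infty)$). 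In particular $\|\tilde\zeta_0\|_{X^0_1}+\|\tilde{\vv v}_0\|_{X^0_1}=\epsilon\mu^{-1/2}(\|\zeta_0\|_{X^0_\mu}+\|\vv v_0\|_{X^0_\mu})$, which for $\mu\sim\epsilon$ is of order $\epsilon^{1/2}$: bounded $X^0_\mu$-data for the original problem correspond to \emph{small} $X^0_1$-data for the normalized one.

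Finally I would transfer the conclusion. Because the common factor $\epsilon^{-1}\sqrt\mu$ is the same for the data and for the solution at every time, the global existence and the a priori bound \eqref{global estimate} supplied by Theorem \ref{Global existence for B-FD in case b=d} carry over verbatim — with the identical universal constant $C$, the factors cancelling — to give a unique global solution $(\tilde\zeta,\tilde{\vv v}_\beta)\in C([0,\infty);X^0_1\times X^0_1)$ of the $\tilde\epsilon=\tilde\mu=1$ system obeying $\max_{s\ge0}(\|\tilde\zeta\|_{X^0_1}+\|\tilde{\vv v}_\beta\|_{X^0_1})\le C(\|\tilde\zeta_0\|_{X^0_1}+\|\tilde{\vv v}_0\|_{X^0_1})$; uniqueness transfers through the same bijection. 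This is exactly a small-data global existence statement for \eqref{eqB-FD} at $\epsilon=\mu=1$, as claimed. The only genuinely delicate point in the argument is the scale invariance of the $\coth(\sqrt{\mu_2}|D|)$ factors established above; everything else is routine change-of-variable bookkeeping.
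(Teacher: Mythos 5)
Your proposal is correct and follows exactly the route the paper intends: the paper offers no proof beyond displaying the change of variables, and your term-by-term verification (including the observation that $\sqrt{\mu_2}\,|D|$ is scale invariant, so the full-dispersion symbols survive with $\tilde\mu_2=\mu_2/\mu=1/\delta^2$, consistent with $\mu_2=\mu/\delta^2$ at $\tilde\mu=1$) together with the norm bookkeeping $\|u\|_{X^0_\mu}=\epsilon^{-1}\sqrt{\mu}\,\|\tilde u\|_{X^0_1}$ is precisely the computation the authors leave implicit. Nothing to add.
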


\setcounter{equation}{0}
\section{Preliminary}
\subsection{Notations}
Vectors will be denoted in bold letters, {\it e.g.}  $\vv v$ . When $\mathcal{B}$ is a Banach space, $\vv v\in\mathcal{B}$ means that each component of $\vv v$ belongs to $\mathcal{B}$. For any $s\in\R$, we denote by $H^s(\R^n)$ the classical  $L^2$ based Sobolev spaces with the norm $\|\cdot\|_{H^s}$.
The notation $\|\cdot\|_{L^p}$ stands for the $L^p(\R^n)$ norm, $1\leq p \leq \infty$. The $L^2(\R^n)$ inner product is denoted by
$(\vv u\,|\,\vv v)_2\eqdefa\int_{\R^n}\vv u\cdot\vv vdx$.
For any $k\in\N$,  we denote
\beno
\|f\|_{W^{k,\infty}}=\sum_{j=0}^k\|\na^jf\|_{L^\infty}.
\eeno
The notation $f\sim g$ means that there exists a constant $C$ such that $\f{1}{C}f\leq g\leq Cf$.  $f\lesssim g$ and $g\gtrsim f$ means that there exists a constant $C$ such that $f\leq Cg$. The condensed notation
$
A_s=B_s+\langle C_s\rangle_{s>\underline{s}},
$
is to say that $A_s=B_s$ if $s\leq\underline{s}$ and $A_s=B_s+C_s$ if $s>\underline{s}$.

The Fourier transform of a tempered distribution $u\in\mathcal{S}'$ is denoted by $\widehat{u}$, which is defined as follows
\beno
\widehat{u}(\xi)\eqdefa\mathcal{F}(u)(\xi)=\int_{\R^n}e^{ix\cdot\xi}u(x)dx.
\eeno
We use $\mathcal{F}^{-1}(f)$ to denote the inverse Fourier  transform of $f(\xi)$.

If $f$ and $u$ are two functions defined on $\R^n$, the  Fourier multiplier  $f(D)u$  is defined in term of Fourier transforms, i.e.,
\beno
\widehat{f(D)u}(\xi)=f(\xi)\hat{u}(\xi).
\eeno

We shall use notations
\beno
\langle\xi\rangle\eqdefa(1+|\xi|^2)^{\f12},\quad\Lambda\eqdefa\bigl(1-\D\bigr)^{\f12}.
\eeno

If  $A, B$ are two operators, $[A,B]=AB-BA$ denotes their commutator.

\vspace{0.3cm}

Throughout the paper, $a,b,c,d\in\R$, $\gamma\in(0,1)$, $\mu_2\sim1$ are given parameters. We shall not show the dependence on such given parameters. $C$ will always denote a universal constant which may be different from line to line but is independent of the parameters involved(say, $\mu$ and $\epsilon$). Otherwise, one uses the notation $C(\lam_1,\lam_2,\cdots)$ to denote a nondecreasing function of the parameters $\lam_1,\lam_2,\cdots$.

\vspace{0.3cm}

The paper is organized as follows. In the remaining part of this Section we prove technical lemmas that will be used in the proofs of the main theorems. Section 3 is devoted to the proof of Theorem \ref{Long time existence for B-FD} which involves the  symmetrization techniques used in our previous work \cite {SX} (see also \cite{SWX} on the Boussinesq $(abcd)$ systems). In  Section 4, we prove Theorem \ref{Global existence for B-FD in case b=d} by adapting the proof of a similar result for the Hamiltonian Boussinesq systems (see \cite{BCS2, Hu}). Finally an Appendix is devoted to the proof of the equivalence of norms \eqref{equivalent 1}, \eqref{equivalent 2} and \eqref{equivalent 3}.

\subsection{Symmetrizer of \eqref{eqB-FD}}
Here and in  the following sections, we shall only deal with the two-dimensional case, since the one-dimensional case is very similar  and actually  much simpler. For simplicity, we shall use $\vv v$ instead of $\vv v_\beta$ and use the following notation
\beq\label{notation}
\sigma(D)\eqdefa\sqrt{\mu_2}|D|\coth(\sqrt{\mu_2}|D|),\quad A(D)\eqdefa1+a\mu\D+\f{1}{\gamma}\sqrt{\f{\mu}{\mu_2}}\sigma(D)
+\f{1}{\gamma^2}\f{\mu}{\mu_2}\sigma(D)^2.
\eeq
With such notations, we rewrite \eqref{eqB-FD} as
\beq\label{B-FD}\left\{\begin{aligned}
&(1-b\mu\D)\p_t\z+\f{1}{\gamma}\na\cdot\bigl((A(D)-\epsilon\z)\vv v\bigr)=0,\\
&(1-d\mu\Delta)\p_t\vv v+(1-\gamma)(1+c\mu\D)\na\z-\f{\epsilon}{2\gamma}\na(|\vv v|^2)=\vv 0.
\end{aligned}\right.\eeq

If $b>0, d\geq0$ or $b=d=0$, letting $g(D)=(1-b\mu\Delta)(1-d\mu\Delta)^{-1}$, setting $\vv V=(\z,\vv v)^T=(\z,v^1,v^2)^T$, then \eqref{B-FD} is equivalent after applying $g(D)$ to the second equation to the condensed system
\beq\label{B-FD 1}
(1-b\mu\Delta)\p_t\vv V+M(\vv V,D)\vv V=\vv 0,
\eeq
where
\beq\label{M 1}\begin{aligned}
M(\vv V,D)=\left(
\begin{array}{ccc}
-\f{\epsilon}{\gamma}\vv v\cdot\na &\f{1}{\gamma}(A(D)-\epsilon\z)\p_1&\f{1}{\gamma}(A(D)-\epsilon\z)\p_2\\
(1-\gamma)g(D)(1+c\mu\Delta)\p_1&-\f{\epsilon}{\gamma}g(D)(v^1\p_1)&-\f{\epsilon}{\gamma}g(D)(v^2\p_1)\\
(1-\gamma)g(D)(1+c\mu\Delta)\p_2&-\f{\epsilon}{\gamma}g(D)(v^1\p_2)&-\f{\epsilon}{\gamma}g(D)(v^2\p_2)
\end{array}\right).\end{aligned}\eeq

When $a,b,c,d$ satisfies the condition \eqref{sorts of parameters}, the system \eqref{B-FD 1} could be treated similarly to  a symmetrizable hyperbolic system under some smallness assumption on $\epsilon$ and $\mu$. The key point to solve \eqref{B-FD 1} is to search a symmetrizer $S_{\vv V}(D)$ of $M(\vv V,D)$ such that the principal part of $iS_{\vv V}(\xi)M(\vv V,\xi)$ is self-adjoint, and that of $S_{\vv V}(\xi)$ is positive and self-adjoint under a smallness assumption on $\epsilon$ and $\mu$. It is not difficult to find that:
\begin{itemize}
\item [(i)] if $b=d$, $g(D)=1$, $S_{\vv V}(D)$ is defined by
\beq\label{S for b=d>0}\begin{aligned}
\left(
\begin{array}{ccc}
\gamma(1-\gamma)(1+c\mu\Delta)&-\epsilon v^1&-\epsilon v^2\\
-\epsilon v^1&A(D)-\epsilon\z&0\\
-\epsilon v^2&0&A(D)-\epsilon\z
\end{array}\right);\end{aligned}\eeq

\item [(ii)] if $b\neq d$, $S_{\vv V}(D)$ is defined by
\beq\label{S for b neq d}\begin{aligned}
\gamma(1-\gamma)\left(
\begin{array}{ccc}
\gamma(1-\gamma)(1+c\mu\Delta)^2g(D)&-\epsilon g(D)\bigl(v^1(1+c\mu\Delta)\bigr)&-\epsilon g(D)\bigl(v^2(1+c\mu\Delta)\bigr)\\
-\epsilon g(D)\bigl(v^1(1+c\mu\Delta)\bigr)&(A(D)-\epsilon\z)(1+c\mu\Delta)&0\\
-\epsilon g(D)\bigl(v^2(1+c\mu\Delta)\bigr)&0&(A(D)-\epsilon\z)(1+c\mu\Delta)
\end{array}\right)\\
+\epsilon^2
\left(
\begin{array}{ccc}
0&0&0\\
0&v^1v^1&v^1v^2\\
0&v^1v^2&v^2v^2
\end{array}\right)(g(D)-1).
\end{aligned}\eeq
\end{itemize}

Note that $S_{\vv V}(D)$ is not self-adjoint since at least its diagonal part is not.

\vspace{0.3cm}
Next we define the energy functional associated to \eqref{B-FD 1} as
\beq\label{E 1}
E_s(\vv V)=\bigl((1-b\mu\Delta)\Lam^s\vv V\,|\,S_{\vv V}(D)\Lam^s\vv V\bigr)_2.
\eeq
We shall show (see Appendix) that $E_s(\vv V)$ defined in \eqref{E 1} is actually a true energy functional equivalent to some $X_{\mu^k}^s(\R^2)$ norm.

\begin{remark}
When $b=0, d>0$, \eqref{B-FD} is equivalent after applying $(1-d\mu\Delta)$ to the first equation to the condensed system
\beq\label{B-FD 2}
(1-d\mu\Delta)\p_t\vv V+M(\vv V,D)\vv V=\vv 0,
\eeq
with $M(\vv V,D)$ defined by
\beq\label{M 2}\begin{aligned}
\left(
\begin{array}{ccc}
-\f{\epsilon}{\gamma}(1-d\mu\Delta)(\vv v\cdot\na)&\f{1}{\gamma}(1-d\mu\Delta)\bigl((A(D)-\epsilon\z)\p_1\bigr)&\f{1}{\gamma}(1-d\mu\Delta)\bigl((A(D)-\epsilon\z)\p_2\bigr)\\
(1-\gamma)(1+c\mu\Delta)\p_1&-\f{\epsilon}{\gamma}v^1\p_1&-\f{\epsilon}{\gamma}v^2\p_1\\
(1-\gamma)(1+c\mu\Delta)\p_2&-\f{\epsilon}{\gamma}v^1\p_2&-\f{\epsilon}{\gamma}v^2\p_2
\end{array}\right).\end{aligned}\eeq
The symmetrizer $S_{\vv V}(D)$ of $M(\vv V,D)$  is defined by
\beq\label{S for b=0}\begin{aligned}
\gamma(1-\gamma)\left(
\begin{array}{ccc}
\gamma(1-\gamma)(1+c\mu\Delta)^2&-\epsilon v^1(1+c\mu\Delta)&-\epsilon v^2(1+c\mu\Delta)\\
-\epsilon v^1(1+c\mu\Delta)&(1+c\mu\Delta)[(A(D)-\epsilon\z)(1-d\mu\Delta)]&0\\
-\epsilon v^2(1+c\mu\Delta)&0&(1+c\mu\Delta)[(A(D)-\epsilon\z)(1-d\mu\Delta)]
\end{array}\right)\\
+d\epsilon^2\mu
\left(
\begin{array}{ccc}
0&0&0\\
0&v^1v^1&v^1v^2\\
0&v^1v^2&v^2v^2
\end{array}\right)\D.
\end{aligned}\eeq

We could also have defined the energy functional associated to \eqref{B-FD 2} as
\beq\label{E 2}
E_s(\vv V)=\bigl((1-d\mu\Delta)\Lam^s\vv V\,|\,S_{\vv V}(D)\Lam^s\vv V\bigr)_2.
\eeq
As for the previous choice, we shall show (see Appendix) that $E_s(\vv V)$ defined in \eqref{E 2} is actually a true energy functional equivalent to some $X_{\mu^k}^s(\R^2)$ norm.
\end{remark}

\subsection{Technical lemmas}
We complete this section by recalling some useful well-known results. Firstly, we recall the tame
product estimates in Sobolev spaces: if $t_0>\f{n}{2}\ (n=1,2)$ and $s\geq0$, one has (see \cite{Taylor} Section 3.5)
\beq\label{product}
\|fg\|_{H^s}\lesssim\|f\|_{H^{t_0}}\|g\|_{H^s}+\langle\|f\|_{H^s}\|g\|_{H^{t_0}}\rangle_{s>t_0},
\quad\forall f,g\in H^s\cap H^{t_0}(\R^n).
\eeq

The following interpolation inequality will be also used frequently
\beq\label{interpolation}
\mu^{\f{\theta}{2}}\|f\|_{H^{s+\theta}}\lesssim\|f\|_{H^s}^{1-\f{\theta}{k}}\bigl(\mu^{\f{k}{2}}\|f\|_{H^{s+k}}\bigr)^{\f{\theta}{k}}
\lesssim\|f\|_{X^s_{\mu^k}},
\eeq
where $0<\theta<k$ and $s\geq 0$.

We now present commutator estimates (see Theorems 3 and 6 in \cite{Lannes1}).
\begin{lemma}\label{Commutator lemma 1}
Let $t_0>\f{n}{2}$, $-t_0<r\leq t_0+1$. Then for all $s\geq 0$, $f\in H^{t_0+1}\cap H^{s+r}(\R^n)$ and $u\in H^{s+r-1}(\R^n)$, there holds:
\beq\label{commutator 1}
\|[\Lam^s, f]u\|_{H^r}\lesssim\|\na f\|_{H^{t_0}}\|u\|_{H^{s+r-1}}+\langle\|\na f\|_{H^{s+r-1}}\|u\|_{H^{t_0}}\rangle_{s>t_0+1-r}.
\eeq
\end{lemma}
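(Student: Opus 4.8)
The bound \eqref{commutator 1} is a \emph{tame} commutator estimate for the Fourier multiplier $\Lam^s=(1-\D)^{s/2}$, whose symbol $\langle\xi\rangle^s$ is smooth away from the origin and of order $s$; the content of the inequality is that the commutator $[\Lam^s,f]$ gains one derivative, so that only $\na f$ enters on the right. The plan is to run a Littlewood--Paley/Bony paraproduct analysis. Writing $\Delta_j$ for the dyadic blocks and $S_j$ for the associated low-frequency truncations, I would first record the Fourier-side identity
\beq\label{plan-symbol}
\widehat{[\Lam^s,g]w}(\xi)=\int_{\R^n}\bigl(\langle\xi\rangle^s-\langle\eta\rangle^s\bigr)\,\widehat g(\xi-\eta)\,\widehat w(\eta)\,d\eta,
\eeq
and then split $[\Lam^s,f]u$ according to the Bony trichotomy, i.e. according to whether the frequency of $f$ is much smaller than, comparable to, or much larger than that of $u$. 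This produces a main ``low-high'' piece $\sum_j[\Lam^s,S_{j-1}f]\Delta_j u$ together with ``high-low'' and ``high-high'' contributions in which $\Lam^s$ effectively falls on $f$.

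For the main low-high piece I would extract the gain of one derivative directly from \eqref{plan-symbol}. In the regime $|\xi-\eta|\ll|\eta|\sim 2^j$ one has the first-order Taylor expansion
\beq\label{plan-taylor}
\langle\xi\rangle^s-\langle\eta\rangle^s=(\xi-\eta)\cdot\int_0^1\na\langle\eta+\tau(\xi-\eta)\rangle^s\,d\tau,
\eeq
and since $|\na\langle\eta\rangle^s|\lesssim\langle\eta\rangle^{s-1}$ on the support of the integrand, the factor $(\xi-\eta)$ transfers one derivative onto $f$ while the remaining multiplier is of order $s-1$. Hence $[\Lam^s,S_{j-1}f]\Delta_j u$ behaves like $(\na S_{j-1}f)$ times an operator of order $s-1$ applied to $\Delta_j u$, which after summing the almost-orthogonal blocks yields a contribution bounded by $\|\na f\|_{L^\infty}\|u\|_{H^{s+r-1}}$. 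Using the Sobolev embedding $\|\na f\|_{L^\infty}\lesssim\|\na f\|_{H^{t_0}}$, valid because $t_0>\f n2$, this reproduces the first term on the right-hand side of \eqref{commutator 1}.

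The remaining high-low and high-high interactions are handled by standard paraproduct and product estimates: there $\Lam^s$ lands on the high-frequency factor $f$, producing a factor $\|f\|_{H^{s+r}}\sim\|\na f\|_{H^{s+r-1}}$ paired with a low-frequency factor of $u$ estimated by $\|u\|_{H^{t_0}}$. A bookkeeping of the dyadic weights shows that these terms are genuinely present only when the indices force $s$ to exceed the threshold $t_0+1-r$, which is exactly the content of the conditional bracket $\langle\cdot\rangle_{s>t_0+1-r}$; below the threshold they are absorbed into the first term. The hypotheses $-t_0<r\le t_0+1$ are precisely what guarantee convergence of the dyadic sums: $r>-t_0$ controls the low-frequency summation, while $r\le t_0+1$ is the ceiling compatible with the one-derivative gain \eqref{plan-taylor}.

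The main obstacle is the careful index bookkeeping in the summation over dyadic blocks, both to isolate the exact threshold $s>t_0+1-r$ and to close the borderline endpoint $r=t_0+1$; in particular one must verify that the Taylor expansion \eqref{plan-taylor} is uniformly valid across blocks (which it is, since in the low-high regime $|\eta|\sim 2^j$ stays away from the origin, so all $\xi$-derivatives of $\langle\cdot\rangle^s$ obey the order-$s$ symbol bounds) and that the derivative gain survives the almost-orthogonal summation without loss. Alternatively, one could deduce \eqref{commutator 1} from a general Coifman--Meyer/Kato--Ponce commutator theorem applied to the symbol $\langle\xi\rangle^s$, as is done in \cite{Lannes1}.
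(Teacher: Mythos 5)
The paper does not prove this lemma at all: it is quoted verbatim from the literature, with the one-line justification ``see Theorems 3 and 6 in \cite{Lannes1}''. So there is no in-paper argument to compare against; the relevant comparison is with the proof in \cite{Lannes1}, and your sketch is essentially a reconstruction of that proof rather than a different route. The mechanism you identify is the correct one: Bony's decomposition of $[\Lam^s,f]u$, a first-order Taylor expansion of the symbol $\langle\xi\rangle^s-\langle\eta\rangle^s$ in the low-high regime to transfer one derivative onto $f$ and leave an operator of order $s-1$ acting on $u$ (giving the tame term $\|\na f\|_{H^{t_0}}\|u\|_{H^{s+r-1}}$ via $H^{t_0}\hookrightarrow L^\infty$), and tame product/paraproduct estimates for the high-low and high-high pieces (giving the conditional term $\|\na f\|_{H^{s+r-1}}\|u\|_{H^{t_0}}$). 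You also correctly locate where the hypotheses $-t_0<r\leq t_0+1$ and the threshold $s>t_0+1-r$ must enter. The caveat is that, as written, everything quantitative — the exact threshold, the endpoint $r=t_0+1$, the negative range of $r$, and the replacement of $\|f\|_{H^{s+r}}$ by $\|\na f\|_{H^{s+r-1}}$ (which requires handling the low-frequency block of $f$ separately) — is deferred to ``bookkeeping of the dyadic weights''; that bookkeeping is precisely the content of the cited theorems, so your text is a correct and well-aimed proof plan rather than a self-contained proof. Given that the paper itself delegates the entire statement to \cite{Lannes1}, your closing sentence (deduce the lemma from the general commutator theorem for symbols in $\Gamma^s_\infty$ applied to $\langle\xi\rangle^s$) is in fact exactly what the paper does, and is the cleanest way to finish.
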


\medskip

Concerning the Fourier multiplier $g(D)$ for $b,d>0$, $b\neq d$, we have the following lemma (see Lemma 2.3 in \cite{SX}).
\begin{lemma}\label{lemma for g}
Let $b, d>0$ and $b\neq d$, $s\in\R$, $\theta\geq0$. Then
\begin{itemize}
\item[(i)]for all $f\in H^{s}(\R^n)$, there hold
\beq\label{g 1}
\min\{1,(\f{b}{d})^\theta\}\|f\|_{H^s}\leq\|g(D)^\theta f\|_{H^s}\leq\max\{1,(\f{b}{d})^\theta\}\|f\|_{H^s},
\eeq
\beq\label{g 2}
\|(g(D)-1)f\|_{H^s}\leq\f{|b-d|}{d}\|f\|_{H^s};
\eeq
\item[(ii)] let $t_0>\f{n}{2}$, $-t_0<r\leq t_0+1$, for all $f\in H^{t_0+1}(\R^n)$ and $u\in H^{r-1}(\R^n)$, there holds
\beq\label{commutator of g}
\|[g(D)^\theta, f]u\|_{H^r}\leq C\|f\|_{H^{t_0+1}}\|u\|_{H^{r-1}},
\eeq
\end{itemize}
where $C$ is a constant independent of $\mu$.
\end{lemma}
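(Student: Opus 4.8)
The plan is to handle the two parts by different means: part (i) is a pointwise statement about the Fourier symbol and is essentially immediate from Plancherel's theorem, while part (ii) is the substantive one, the difficulty being to produce a commutator bound with a constant \emph{independent of $\mu$}.

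For part (i), I would write the symbol of $g(D)$ as $g(\xi)=G(\mu|\xi|^2)$ with $G(t)=\frac{1+bt}{1+dt}=\frac{b}{d}+\frac{(d-b)/d}{1+dt}$. Since $G'(t)=\frac{b-d}{(1+dt)^2}$ has constant sign, $G$ is monotone on $[0,\infty)$ with $G(0)=1$ and $G(t)\to b/d$, so $G(t)$ — and hence $g(\xi)^\theta$ for $\theta\geq0$ — takes values in the closed interval with endpoints $1$ and $(b/d)^\theta$. Thus $\min\{1,(b/d)^\theta\}\leq g(\xi)^\theta\leq\max\{1,(b/d)^\theta\}$ pointwise, and \eqref{g 1} follows at once from $\|g(D)^\theta f\|_{H^s}^2=\int_{\R^n}\langle\xi\rangle^{2s}g(\xi)^{2\theta}|\widehat f(\xi)|^2\,d\xi$. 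For \eqref{g 2} I would simply note $g(\xi)-1=\frac{(b-d)\mu|\xi|^2}{1+d\mu|\xi|^2}$, whence $|g(\xi)-1|\leq\frac{|b-d|}{d}$ uniformly in $\xi$ and $\mu$, and conclude again by Plancherel.

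For part (ii), the useful first reduction is $[g(D)^\theta,f]=[g(D)^\theta-1,f]$, so it suffices to commute with an operator whose symbol vanishes at the origin. The heart of the matter is to show that $g^\theta$ lies in the H\"ormander class $S^0$ \emph{with seminorms uniform in $\mu\in(0,1)$}, that is, $|\partial_\xi^\alpha g(\xi)^\theta|\leq C_\alpha\langle\xi\rangle^{-|\alpha|}$ with $C_\alpha$ independent of $\mu$. Granting this, \eqref{commutator of g} follows from the general zeroth-order commutator estimate of the type underlying Lemma \ref{Commutator lemma 1} (see \cite{Lannes1}): since $g(D)^\theta$ has order $0$, the commutator $[g(D)^\theta,f]$ has order $-1$ and maps $H^{r-1}\to H^r$ with norm controlled by $\|f\|_{H^{t_0+1}}$ times finitely many symbol seminorms of $g^\theta$, all of which are uniform in $\mu$.

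The remaining task, and the step I expect to be the main obstacle, is to verify the uniform symbol bounds. Writing $g(\xi)^\theta=\Phi(\mu|\xi|^2)$ with $\Phi=G^\theta$, the explicit form of $G$ gives $|\Phi^{(j)}(t)|\leq C_j(1+dt)^{-(j+1)}$ for $j\geq1$ (each differentiation produces an extra factor $(1+dt)^{-1}$), while $\Phi$ itself is bounded. By the Fa\`a di Bruno formula a typical term of $\partial_\xi^\alpha\Phi(\mu|\xi|^2)$ has the form $\Phi^{(p+q)}(\mu|\xi|^2)\,\mu^{p+q}\,\xi^{\otimes p}$ with $p+2q=|\alpha|$. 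Substituting $s=\mu|\xi|^2$ and using $\mu<1$, one checks that $|\Phi^{(p+q)}(\mu|\xi|^2)|\,\mu^{p+q}|\xi|^p\langle\xi\rangle^{|\alpha|}$ reduces to an expression of the shape $(1+ds)^{-(p+q+1)}s^{p/2}(\mu+s)^{p/2+q}$, which is bounded for $s\in[0,\infty)$ and $\mu\in(0,1)$ (it decays like $s^{-1}$ as $s\to\infty$ and stays bounded near $s=0$, using $(\mu+s)^{p/2+q}\leq(1+s)^{p/2+q}$). This is precisely where the scaling of the operator conspires with $\mu<1$ to keep every constant independent of $\mu$, and it is the computation I would carry out most carefully. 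As a consistency check, for $\theta=1$ one can bypass the symbol calculus entirely, using $g(D)-1=(d-b)\mu\Delta(1-d\mu\Delta)^{-1}$ and the resolvent identity to express $[g(D),f]$ through the $\mu$-uniformly bounded operators $\sqrt\mu\,\nabla(1-d\mu\Delta)^{-1}$ and $(1-d\mu\Delta)^{-1}$ together with multiplication by $\nabla f$ and $\Delta f$.
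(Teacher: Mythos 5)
Your proof is correct, and for the substantive part (ii) it follows exactly the route this paper takes for the analogous multiplier $\sigma(D)^\theta$ in its Lemma~\ref{lemma for sigma}: verify that the symbol lies in a zero-order class with seminorms uniform in $\mu$ (your Fa\`a di Bruno computation giving $(1+ds)^{-(p+q+1)}s^{p/2}(\mu+s)^{p/2+q}$ bounded is the right check) and then invoke Theorems 3 and 6 of \cite{Lannes1}. The paper itself does not reprove Lemma~\ref{lemma for g} but cites Lemma 2.3 of \cite{SX}, which is established by the same symbol-class argument, so your proposal matches the intended proof; part (i) via the explicit monotone symbol $G(t)=\frac{1+bt}{1+dt}$ and Plancherel is likewise the standard (and correct) argument.
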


 We now state a useful lemma for the Fourier multiplier $\sigma(D)$.
\begin{lemma}\label{lemma for sigma}
Let $\theta\geq0$. We have
\begin{itemize}
\item[(i)]for all $f\in H^{s}(\R^n)$, there holds
\beq\label{sigma 1}
\mu_2^\theta\||D|^\theta f\|_{L^2}^2\leq\|\sigma(D)^\theta f\|_{L^2}^2\lesssim\|f\|_{H^\theta}^2,
\eeq
\item[(ii)] let $t_0>\f{n}{2}$, $-t_0<r+\theta\leq t_0+1$, for all $f\in H^{t_0+1}(\R^n)$ and $u\in H^{r-1}(\R^n)$, there holds
\beq\label{commutator of sigma}
\|[\sigma(D)^\theta, f]u\|_{H^r}\lesssim\|f\|_{H^{t_0+1}}\|u\|_{H^{r+\theta-1}}.
\eeq
\item[(iii)] let $-t_0<r\leq t_0$, $\theta=\f12,\,1$, we have
\beq\label{commutator of sigma and g}
\|[\sigma(D)^\theta g(D)^{\f12}, f]u\|_{H^r}\lesssim\|f\|_{H^{t_0+1}}\|u\|_{H^{r+\theta-1}}.
\eeq
\end{itemize}
\end{lemma}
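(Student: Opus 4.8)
My plan is to treat $\sigma(D)^\theta$ as a smooth Fourier multiplier of order $\theta$: I would first record pointwise symbol bounds for $\sigma(\xi)^\theta$, then read off (i) by Plancherel, obtain (ii) from the general commutator estimate for Fourier multipliers once those bounds are available, and finally deduce (iii) from (i), (ii) and Lemma~\ref{lemma for g} via the Leibniz rule for commutators. The preliminary observation driving everything is that $r\mapsto r\coth r$ extends to an even, real-analytic function of $r$ (its expansion $1+\tfrac{r^2}{3}-\tfrac{r^4}{45}+\cdots$ contains only even powers), hence is a smooth function of $r^2$. Thus $\sigma(\xi)=\Phi(\mu_2|\xi|^2)$ with $\Phi(s)=\sqrt s\coth\sqrt s$ smooth on $[0,\infty)$, $\Phi(0)=1$, $\Phi(s)=\sqrt s+O(\sqrt s\,e^{-2\sqrt s})$ as $s\to\infty$; in particular $\sigma$ has no singularity at $\xi=0$. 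From $\max\{1,r\}\le r\coth r\le 1+r$ I get the two-sided bound $\max\{1,\sqrt{\mu_2}|\xi|\}\le\sigma(\xi)\le 1+\sqrt{\mu_2}|\xi|\lesssim\langle\xi\rangle$ (using $\mu_2\sim1$), and differentiating $\sigma(\xi)=\Phi(\mu_2|\xi|^2)$ with $|\Phi^{(k)}(s)|\lesssim\langle s\rangle^{\frac12-k}$ yields $|\partial_\xi^\alpha\sigma(\xi)|\le C_\alpha\langle\xi\rangle^{1-|\alpha|}$. Since $\sigma\ge1$, composing with $x\mapsto x^\theta$ gives, for every $\theta\ge0$, the order-$\theta$ symbol estimates $|\partial_\xi^\alpha(\sigma(\xi)^\theta)|\le C_{\alpha,\theta}\langle\xi\rangle^{\theta-|\alpha|}$, with all constants uniform in $\mu_2\sim1$. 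With these bounds, (i) is immediate from Plancherel: the lower bound in \eqref{sigma 1} comes from $\sigma(\xi)\ge\sqrt{\mu_2}|\xi|$, the upper bound from $\sigma(\xi)\lesssim\langle\xi\rangle$.

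The heart of the lemma is (ii). Having verified that $\sigma(D)^\theta$ is a smooth Fourier multiplier of order $\theta$, I would invoke the general commutator estimate for such multipliers --- the same circle of results used for Lemma~\ref{Commutator lemma 1}, cf. Theorems~3 and 6 in \cite{Lannes1} --- which says that the commutator with multiplication by $f$ gains exactly one derivative, giving input $u\in H^{r+\theta-1}$, output $H^r$, and, precisely under $-t_0<r+\theta\le t_0+1$, only the low norm $\|f\|_{H^{t_0+1}}$ on the right (the tame remainder dropping out). To make the mechanism explicit in the principal range $r>-t_0$, one can factor $\sigma(D)^\theta=m(D)\Lam^\theta$ with $m(\xi)=\sigma(\xi)^\theta\langle\xi\rangle^{-\theta}$ a bounded order-zero symbol and split
\[
[\sigma(D)^\theta,f]=m(D)[\Lam^\theta,f]+[m(D),f]\Lam^\theta ,
\]
bounding the first term by the $H^r$-boundedness of $m(D)$ together with \eqref{commutator 1} (whose remainder term is absent exactly because $r+\theta\le t_0+1$) and the second by an order-zero commutator bound; alternatively one estimates the kernel directly using $\sigma(\xi)^\theta-\sigma(\eta)^\theta=\int_0^1\nabla(\sigma^\theta)\big(\eta+\tau(\xi-\eta)\big)\cdot(\xi-\eta)\,d\tau$, where the factor $\xi-\eta$ produces the gained derivative of $f$ and $|\nabla(\sigma^\theta)|\lesssim\langle\cdot\rangle^{\theta-1}$ produces the order $\theta-1$. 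The hard part here is genuinely (a) the smoothness of $\sigma^\theta$ at $\xi=0$, which is exactly what the evenness of $r\coth r$ in $r$ supplies, and (b) tracking indices so the gain holds with only $\|f\|_{H^{t_0+1}}$, which is where $r+\theta\le t_0+1$ is used.

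Finally, for (iii), note $g(\xi)=\frac{1+b\mu|\xi|^2}{1+d\mu|\xi|^2}$ is a bounded order-zero symbol, so $g(D)^{1/2}$ is bounded on every $H^s$ by \eqref{g 1} (when $b=d$ one has $g\equiv1$ and the statement is trivial). For $\theta\in\{\tfrac12,1\}$ and $-t_0<r\le t_0$ one has $-t_0<r+\theta\le t_0+1$, so I would write
\[
[\sigma(D)^\theta g(D)^{\frac12},f]=[\sigma(D)^\theta,f]\,g(D)^{\frac12}+\sigma(D)^\theta\,[g(D)^{\frac12},f],
\]
bound the first summand by (ii) and \eqref{g 1} through $\|[\sigma(D)^\theta,f]g(D)^{\frac12}u\|_{H^r}\lesssim\|f\|_{H^{t_0+1}}\|g(D)^{\frac12}u\|_{H^{r+\theta-1}}\lesssim\|f\|_{H^{t_0+1}}\|u\|_{H^{r+\theta-1}}$, and bound the second by the order-$\theta$ estimate $\|\sigma(D)^\theta w\|_{H^r}\lesssim\|w\|_{H^{r+\theta}}$ from the symbol bounds established above together with \eqref{commutator of g} applied at regularity $r+\theta$ (legitimate since $-t_0<r+\theta\le t_0+1$), giving $\|\sigma(D)^\theta[g(D)^{\frac12},f]u\|_{H^r}\lesssim\|[g(D)^{\frac12},f]u\|_{H^{r+\theta}}\lesssim\|f\|_{H^{t_0+1}}\|u\|_{H^{r+\theta-1}}$. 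Summing the two yields \eqref{commutator of sigma and g}.
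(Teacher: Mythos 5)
Your proposal is correct and follows essentially the same route as the paper: Plancherel together with $r\coth r\geq\max\{1,r\}$ for part (i), verification that $\sigma(\xi)^\theta$ is an order-$\theta$ symbol (in the class of Theorems 3 and 6 of \cite{Lannes1}) for part (ii), and the Leibniz splitting $[\sigma(D)^\theta g(D)^{\f12},f]=\sigma(D)^\theta[g(D)^{\f12},f]+[\sigma(D)^\theta,f]g(D)^{\f12}$ combined with \eqref{g 1}, \eqref{commutator of g}, \eqref{sigma 1} and \eqref{commutator of sigma} for part (iii). The only difference is cosmetic: you justify smoothness of the symbol at $\xi=0$ via the evenness of $r\coth r$ and sketch an alternative factorization $\sigma(D)^\theta=m(D)\Lam^\theta$, whereas the paper checks the symbol bounds \eqref{sigma 1 b}--\eqref{sigma 1 c} directly.
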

\begin{proof}
(i). Recalling the definition of $\sigma(D)$ in \eqref{notation}, by Plancherel theorem, using the fact that $\coth (s)\geq1$, we have
\beno\begin{aligned}
&\|\sigma(D)^\theta f\|_{L^2}^2=(2\pi)^{-n}\int_{\R^n}\bigl(\sqrt{\mu_2}|\xi|\coth(\sqrt{\mu_2}|\xi|)\bigr)^{2\theta}|\hat{f}(\xi)|^2 d\xi\geq(2\pi)^{-n}\int_{\R^n}\bigl(\sqrt{\mu_2}|\xi|\bigr)^{2\theta}|\hat{f}(\xi)|^2 d\xi,
\end{aligned}\eeno
which implies
\beq\label{sigma 1 a}\begin{aligned}
&\|\sigma(D)^\theta f\|_{L^2}^2\geq\mu_2^\theta\||D|^\theta f\|_{L^2}^2.
\end{aligned}\eeq
This is the first part of \eqref{sigma 1}.

Since $\lim_{s\rightarrow 0} s\coth(s)=1$ and $\lim_{s\rightarrow +\infty} \coth(s)=1$, it is easy to get the second part of \eqref{sigma 1}.

\medskip

(ii). Recalling the Definition 9 of \cite{Lannes1}, one could check that $\sigma(\xi)^{\theta}$ is a pseudo-differential operator of order $\theta$.
Indeed, on one hand, for $|\xi|\leq1$, we have
\beno
\sigma(\xi)^{\theta}=\Bigl(\sqrt{\mu_2}|\xi|+\f{2\sqrt{\mu_2}|\xi|}{e^{2\sqrt{\mu_2}|\xi|}-1}\Bigr)^\theta\leq(1+\sqrt{\mu_2})^\theta,
\eeno
which gives rise to
\beq\label{sigma 1 b}
\sup_{|\xi|\leq1}|\sigma(\xi)^\theta|\leq(1+\sqrt{\mu_2})^\theta.
\eeq
On the other hand, for $|\xi|\geq\f14$, $\mu_2\sim1$ and $\beta\in\Z_{\geq0}^n$, it is easy to check
\beq\label{sigma 1 c}
\sup_{|\beta|\leq 2+[\f{n}{2}]+n}\sup_{|\xi|\geq\f14}\langle\xi\rangle^{|\beta|-\theta}|\p_\xi^\beta\sigma(\xi)^\theta|\lesssim 1.
\eeq
Due to \eqref{sigma 1 b} and \eqref{sigma 1 c}, we have $\sigma(D)^\theta\in\Gamma^\theta_\infty$(see Definition 9 of \cite{Lannes1}). Then Theorems 3 and 6 yield \eqref{commutator of sigma}.

\medskip

(iii) Since
\beno
[\sigma(D)^\theta g(D)^{\f12}, f]u=\sigma(D)^\theta \bigl([g(D)^{\f12}, f]u\bigr)+[\sigma(D)^\theta, f]g(D)^{\f12}u,
\eeno
using \eqref{g 1}, \eqref{commutator of g}, \eqref{sigma 1} and \eqref{commutator of sigma}, we have
\beno\begin{aligned}
&\|[\sigma(D)^\theta g(D)^{\f12}, f]u\|_{H^r}\lesssim\|[g(D)^{\f12}, f]u\|_{H^{r+\theta}}+\|f\|_{H^{t_0+1}}\|g(D)^{\f12}u\|_{H^{r+\theta-1}}\lesssim\|f\|_{H^{t_0+1}}\|u\|_{H^{r+\theta-1}}.
\end{aligned}\eeno
This is \eqref{commutator of sigma and g}. We complete the proof of Lemma.
\end{proof}

\setcounter{equation}{0}
\section{Long time existence for \eqref{eqB-FD}-\eqref{initial data}}
The goal of this section is to prove Theorem \ref{Long time existence for B-FD} that is the long time existence of solutions of  \eqref{eqB-FD}-\eqref{initial data}.  The proof follows the same approach used in \cite{SX} that is to derive  energy estimates on suitable symmetrizable linearized system and then use an iterative scheme.

\subsection{Proof of Theorem \ref{Long time existence for B-FD}}
The proof of Theorem \ref{Long time existence for B-FD} relies heavily on the {\it a priori} energy estimates for \eqref{eqB-FD}. To do so,
 we assume
\beq\label{ansatz for amplitude}
1-\epsilon\|\z(t)\|_{L^\infty}\geq\f{H}{2},\quad \sqrt{\epsilon}\|(\z(t),\,\vv v(t))\|_{W^{1,\infty}}\leq 1\quad\hbox{for any}\quad t\in(0,t^*),
\eeq
where $t^*$  will be taken at the end of the proof.
Then we have the following {\it a priori} energy estimates.
\begin{proposition}\label{prop for energy estimate}
Let $s\geq t_0+2$ and $t_0>\f{n}{2}$ with $n=1,2$. Assume that $(\z,\,\vv v)$ are smooth solutions to \eqref{eqB-FD}.
Then under the assumption \eqref{ansatz for amplitude}, there exist small constants $\wt\epsilon_1>0$ and  $\wt\mu>0$ such that for all $\epsilon\leq\wt\epsilon_1$, $\mu\leq\wt\mu$,
\beq\label{priori energy estimate}
\f{d}{dt}E_s(\vv V)\lesssim\epsilon\bigl(1+\epsilon^2 E_s(\vv V)\bigr)\bigl(E_s(\vv V)\bigr)^{\f32},
\eeq
where $E_s(\vv V)$ is defined in \eqref{E 1} or \eqref{E 2}.
\end{proposition}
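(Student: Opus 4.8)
The plan is to run a symmetrized $L^2$ energy estimate on the condensed system \eqref{B-FD 1}, exploiting exactly the two structural properties for which $S_{\vv V}(D)$ was designed. Write $P=1-b\mu\D$, a scalar, self-adjoint, $\mu$-uniformly positive Fourier multiplier, and set $\vv V_s=\Lam^s\vv V$. Since $P$ and $\Lam^s$ are Fourier multipliers they commute, so applying $\Lam^s$ to \eqref{B-FD 1} and using that the constant-coefficient part $M_0$ of $M$ commutes with $\Lam^s$ gives
\[
P\p_t\vv V_s+M(\vv V,D)\vv V_s=\vv R_s,\qquad \vv R_s\eqdefa[M(\vv V,D),\Lam^s]\vv V=\epsilon[M_1,\Lam^s]\vv V,
\]
where $M=M_0+\epsilon M_1$ isolates the variable ($\epsilon\z,\epsilon\vv v$) coefficients in $M_1$. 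First I would differentiate $E_s(\vv V)=\bigl(P\vv V_s\,|\,S_{\vv V}(D)\vv V_s\bigr)_2$ in time; since all quantities are real this yields three groups of terms: (i) the two terms in which $\p_t$ falls on a copy of $\vv V_s$, which after inserting the equation become quadratic forms in $\vv V_s$ built from $M$ and $S_{\vv V}$; (ii) the term in which $\p_t$ falls on the coefficients of $S_{\vv V}(D)$; and (iii) the contributions of the remainder $\vv R_s$.

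For group (i), inserting the equation and using that $P$ is a scalar self-adjoint multiplier, the two terms combine — up to the commutator $[P,S_{\vv V}]$ — into $-\bigl((S_{\vv V}+S_{\vv V}^{*})M\,\vv V_s\,|\,\vv V_s\bigr)_2$, i.e. the self-adjoint part of $S_{\vv V}$ paired against $M$. Writing $S_{\vv V}=S_0+\epsilon S_1+\dots$ with $S_0$ the constant-coefficient part, the $\epsilon$-independent contribution is $-2(S_0M_0\vv V_s\,|\,\vv V_s)_2$. Now $S_0$ is self-adjoint and, by the design of the symmetrizer, $S_0M_0$ is skew-adjoint (equivalently $iS_0M_0$ is self-adjoint); hence $(S_0M_0\vv V_s\,|\,\vv V_s)_2=0$ in the real pairing, and no $\mu$-uniform, top-order term survives. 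Every surviving term of group (i), as well as the commutator $[P,S_{\vv V}]=-b\mu[\D,S_{\vv V}]$, then carries an explicit factor $\epsilon$, being built from $\epsilon\z$ and $\epsilon\vv v$.

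For group (ii), since $S_{\vv V}(D)$ depends on $\vv V$ only through $\epsilon\z$ and $\epsilon\vv v$, the operator $\p_tS_{\vv V}(D)$ carries an explicit $\epsilon$ and involves only $\p_t\z,\p_t\vv v$; solving \eqref{B-FD} for the time derivatives and invoking \eqref{ansatz for amplitude} bounds these, so group (ii) is $O(\epsilon)$. For group (iii), $\vv R_s=\epsilon[M_1,\Lam^s]\vv V$ is estimated, together with the $[P,S_{\vv V}]$ term, by the commutator bounds of Lemma \ref{Commutator lemma 1}, Lemma \ref{lemma for g} and Lemma \ref{lemma for sigma}, which are uniform in $\mu$ and tailored to $A(D)$, $g(D)$, $\sigma(D)$. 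The ansatz \eqref{ansatz for amplitude} guarantees both the coercivity $S_{\vv V}(D)>0$ — whence $E_s\sim\|\z\|_{X^s_{\mu^k}}^2+\|\vv v\|_{X^s_{\mu^{k'}}}^2$ (Appendix) — and the smallness of the $W^{1,\infty}$-norms of the coefficients. Using this equivalence with the interpolation inequality \eqref{interpolation} and the tame product estimate \eqref{product} to convert Sobolev norms of $\vv V$ into powers of $E_s$ (for $s\geq t_0+2$ one has $\|\vv V\|_{W^{1,\infty}}\lesssim\sqrt{E_s}$), all terms assemble into $\epsilon\bigl(1+\epsilon^2E_s(\vv V)\bigr)\bigl(E_s(\vv V)\bigr)^{\f32}$, the factor $1+\epsilon^2E_s$ collecting the genuinely higher-order contributions such as the $\epsilon^2$ block of \eqref{S for b neq d} and the quadratic-coefficient terms.

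The main obstacle I expect is the uniform-in-$\mu$ bookkeeping of the commutators: the entries of $M$ and $S_{\vv V}$ mix the multiplications by $\z,\vv v$ with the nonlocal, $\mu$-dependent multipliers $A(D)$, $(1\pm c\mu\D)$, $g(D)$ and $\sigma(D)$, so the commutators $[\Lam^s,\cdot]$ and $[P,S_{\vv V}]$ must be expanded so as to simultaneously (a) extract the factor $\epsilon$, (b) keep all constants independent of $\mu\in(0,1)$, and (c) lose no more than the single derivative compensated by the $X^s_{\mu^k}$ weights. The cases $b\neq d$ and $b=0,d>0$, where $g(D)\neq1$ and the extra blocks appear, are the most delicate and rely precisely on the sharper estimates \eqref{commutator of g}, \eqref{commutator of sigma} and \eqref{commutator of sigma and g}.
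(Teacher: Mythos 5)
Your overall architecture coincides with the paper's: the same energy $E_s(\vv V)=\bigl((1-b\mu\D)\Lam^s\vv V\,|\,S_{\vv V}(D)\Lam^s\vv V\bigr)_2$, the same three-way split into the equation-insertion terms, the commutator $-b\mu([S_{\vv V}(D)^*,\D]\Lam^s\vv V\,|\,\Lam^s\p_t\vv V)_2$ and the $\p_tS_{\vv V}(D)$ term (the paper's $I$, $II$, $III$ in \eqref{est 3}), the same treatment of the remainder $[\Lam^s,M(\vv V,D)]\vv V$ via Lemmas \ref{Commutator lemma 1}, \ref{lemma for g} and \ref{lemma for sigma}, and the same use of the norm equivalences \eqref{equivalent 1}--\eqref{equivalent 3} together with \eqref{interpolation}. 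Groups (ii) and (iii) of your plan are handled exactly as in the paper's Steps 2--3 and Step 1.1.

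The one step that would fail as written is the claim that, once $(S_0M_0\vv V_s\,|\,\vv V_s)_2=0$ is secured by skew-adjointness of $S_0M_0$, ``every surviving term of group (i) carries an explicit factor $\epsilon$'' and can therefore be assembled into $\epsilon(1+\epsilon^2E_s)E_s^{3/2}$. The factor $\epsilon$ alone is not enough: the cross terms $\epsilon(S_0M_1+S_1M_0)$ still contain unbounded operators of high order acting on $\Lam^s\vv V$. For instance, the $-\epsilon\z$ part of $a_{121}$ taken by itself produces $\epsilon\bigl((1+c\mu\D)^2g(D)(\z\,\p_1\Lam^s v^1)\,|\,\Lam^s\z\bigr)_2$, which involves $\mu^2\na^4\Lam^s\z$ together with $\na\Lam^s v^1$ and is not controlled by $\epsilon\|\z\|_{X^s_{\mu^3}}^2\|\vv v\|_{X^s_{\mu^3}}$. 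What closes the estimate is a second layer of cancellation \emph{inside} the $O(\epsilon)$ terms: the paper computes the symmetrized blocks $a_{ij}^*+a_{ji}$ and the diagonal blocks explicitly (see \eqref{est 10a}, \eqref{est 22a}, \eqref{est 23a} and the integration by parts leading from \eqref{est 8}) and shows that, modulo commutators estimated by \eqref{commutator of g} and \eqref{commutator of sigma}, they reduce to multiplication by $\na\z$, $\na\vv v$ composed with multipliers that can be split evenly between the two slots of the inner product. This cancellation is the actual content of the symmetrizer beyond ``$iS_0M_0$ is self-adjoint'' and is what occupies the whole of Step 1.2 in the paper; your final paragraph gestures at the derivative count but does not identify it, so you should make it explicit before asserting the assembly into \eqref{priori energy estimate}.
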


\begin{remark}
Under the assumption \ref{ansatz for amplitude}, taking $\wt\epsilon>0$ and  $\wt\mu>0$ sufficiently small, there will hold for any $\epsilon\leq\wt\epsilon$, $\mu\leq\wt\mu$
\beq\label{equivalent energy}
E_s(\vv V)\sim\mathcal{E}_s{(t)}\eqdefa\|\z(t)\|_{X^s_{\mu^k}}^2+\|\vv v(t)\|_{X^s_{\mu^{k'}}}^2,
\eeq
where $(k,k')$ is defined in Definition \ref{def2}. We shall use \eqref{equivalent energy} to derive \eqref{priori energy estimate}. The proof of \eqref{equivalent energy} will be postponed to the Appendix for only three typical cases in two dimensional space.
\end{remark}

\begin{proof}[Proof of Theorem \ref{Long time existence for B-FD}]
Assume that
\beq\label{ansatz for energy a}
E_s(\vv V)\leq 16E_s(\vv V_0),\quad\hbox{for any } t\in[0,t^*],
\eeq
where $t^*=\f{T}{\epsilon}$ will be determined later on.
Taking $\wt{\epsilon}_2=\f{1}{4(\mathcal{E}_s(0))^{\f12}}$,
due to \eqref{priori energy estimate}, \eqref{equivalent energy} and \eqref{ansatz for energy a}, for any $\epsilon\leq\wt{\epsilon_2}$,  there exists a constant $C_1>0$ such that
\beno
\f{d}{dt}\bigl(E_s(\vv V)\bigr)^{\f12}\leq C_1\epsilon E_s(\vv V),
\eeno
which gives rise to
\beq\label{est 1}
\bigl(E_s(\vv V)\bigr)^{\f12}\leq\f{\bigl(E_s(\vv V_0)\bigr)^{\f12}}{1-C_1t\epsilon \bigl(E_s(\vv V_0)\bigr)^{\f12}}\leq 2\bigl(E_s(\vv V_0)\bigr)^{\f12},
\eeq
for any $t\leq\f{\wt{T}}{\epsilon}$ with $\wt{T}=\f{1}{2C_1(E_s(\vv V_0))^{\f12}}$. On the other hand, \eqref{equivalent energy} implies there exists a constant $C_2>0$ such that $(E_s(\vv V_0))^{\f12}\leq C_2(\mathcal{E}_s(0))^{\f12}$. Taking
\beno
T=\f{1}{2C_1C_2(\mathcal{E}_s(0))^{\f12}}\leq\wt{T},\quad t^*=T/\epsilon,
\eeno
we have that \eqref{est 1} holds for any $t\leq T/\epsilon$ which improves the ansatz \eqref{ansatz for energy a}. Moreover, using \eqref{equivalent energy} again, we deduce from \eqref{est 1} that for some $C_3>0$,
\beq\label{energy estimate}
\sup_{(0,T/\epsilon)}\mathcal{E}_s(t)\leq C_3\mathcal{E}_s(0).
\eeq

By virtue of Sobolev inequality and \eqref{energy estimate}, noticing that $s\geq t_0+2>3$, there exists a constant $C_4>0$ such that
\beno
\|(\z(t),\,\vv v(t))\|_{W^{1,\infty}}\leq C_4\|(\z(t),\,\vv v(t))\|_{H^s}\leq C_4C_3^{\f12}\bigl(\mathcal{E}_s(0)\bigr)^{\f12}.
\eeno
Taking $\wt{\epsilon}_3=\min\{\f{1-H}{C_4C_3^{\f12}\bigl(\mathcal{E}_s(0)\bigr)^{\f12}},\,\f{1}{4C_4^2C_3\mathcal{E}_s(0)}\}$, we have for any $\epsilon\leq\min\{\wt\epsilon_2,\wt\epsilon_3\}$,
\beq\label{est 2}
1-\epsilon\|\z\|_{L^\infty}\geq H>\f{H}{2},\quad\sqrt\epsilon\|(\z(t),\,\vv v(t))\|_{W^{1,\infty}}\leq\f12,
\eeq
which improves the ansatz \eqref{ansatz for amplitude}. Then taking $\wt\epsilon=\min\{\wt\epsilon_1,\wt\epsilon_2,\wt\epsilon_3\}$, we have for any $\epsilon\leq\wt\epsilon$ and $\mu\leq\wt\mu$, energy estimate \eqref{energy estimate} holds for any $t\in[0,T/\epsilon]$. Thus, \eqref{long time estimate} is proved.

The {\it existence} and {\it uniqueness} of the solution can be verified by standard mollification method and the Cauchy-Lipschitz theorem. One could refer to \cite{SX}. Now, we complete the proof of Theorem \ref{Long time existence for B-FD}.
\end{proof}

The rest of this section is devoted to prove Proposition \ref{prop for energy estimate}. We only sketch the proof of three typical cases in two dimensional space, since the others could be treated in a similar way.

\subsection{A priori estimates for the  "general case": $b\neq d,\, b>0,\, d>0, \, a\leq 0,\, c<0$.}
In this case, one could check that
\beq\label{equivalent 1}
E_s(\vv V)\sim\mathcal{E}_s{(t)}\eqdefa\|\z(t)\|_{X^s_{\mu^3}}^2+\|\vv v(t)\|_{X^s_{\mu^{3}}}^2
\eeq
for any $\epsilon\leq\wt\epsilon_1$ and $\mu\leq\wt\mu$ with $\wt\epsilon_1$ and $\wt\mu$ being sufficiently small. We postpone the proof of \eqref{equivalent 1} to Appendix.

A direct energy estimate shows that
\beq\label{est 3}\begin{aligned}
&\f{d}{dt}E_s(\vv V)=\bigl((1-b\mu\Delta)\Lam^s\p_t\vv V\,|\,(S_{\vv V}(D)+S_{\vv V}(D)^*)\Lam^s\vv V\bigr)_2\\
&\quad-b\mu([S_{\vv V}(D)^*, \Delta]\Lam^s\vv V\,|\,\Lam^s\p_t\vv V)_2
+\bigl((1-b\mu\Delta)\Lam^s\vv V\,|\,\p_tS_{\vv V}(D)\Lam^s\vv V\bigr)_2\\
&\eqdefa I+II+III,
\end{aligned}\eeq
where $S_{\vv V}(D)^*$ is the adjoint operator of $S_{\vv V}(D)$.

{\bf Step 1. Estimate on $I$. } Using \eqref{B-FD 1}, we have
\beq\label{est 4}\begin{aligned}
&I=-\bigl([\Lam^s,M(\vv V,D)]\vv V\,|\,(S_{\vv V}(D)+S_{\vv V}(D)^*)\Lam^s\vv V\bigr)_2\\
&\quad
-\bigl((S_{\vv V}(D)+S_{\vv V}(D)^*)\bigl(M(\vv V,D)\Lam^s\vv V\bigr)\,|\,\Lam^s\vv V\bigr)_2\eqdefa I_1+I_2.
\end{aligned}\eeq

{\it Step 1.1. Estimate on $I_1$.} Using \eqref{M 1} and \eqref{S for b neq d}, a direct calculation yields
\beq\label{est a}\begin{aligned}
&\quad\bigl([\Lam^s,M(\vv V,D)]\vv V\,|\,S_{\vv V}(D)\Lam^s\vv V\bigr)_2\\
&=-\epsilon(1-\gamma)^2\gamma\bigl(g(D)\bigl([\Lam^s,\vv v]\cdot\na\z+[\Lam^s,\z]\na\cdot\vv v\bigr)\,|\,(1+c\mu\D)^2\Lam^s\z\bigr)_2\\
&\quad+\epsilon^2(1-\gamma)\bigl(g(D)\bigl([\Lam^s,\vv v]\cdot\na\z+[\Lam^s,\z]\na\cdot\vv v\bigr)\,|\,\vv v\cdot(1+c\mu\D)\Lam^s\vv v\bigr)_2\\
&\quad+\epsilon^2(1-\gamma)\sum_{j=1,2}\bigl(g(D)\bigl([\Lam^s,\vv v]\cdot\p_j\vv v\bigr)\,|\, g(D)\bigl(v^j(1+c\mu\D)\Lam^s\z\bigr)+\z(1+c\mu\D)\Lam^sv^j\bigr)_2\\
&\quad
-\epsilon(1-\gamma)\sum_{j=1,2}\bigl(g(D)^{\f12}(1+c\mu\D)\bigl([\Lam^s,\vv v]\cdot\p_j\vv v\bigr)\,|\,g(D)^{\f12}A(D)\Lam^sv^j\bigr)_2\\
&\quad-\f{\epsilon^3}{\gamma}\sum_{j=1,2}\bigl(g(D)\bigl([\Lam^s,\vv v]\cdot\p_j\vv v\bigr)\,|\,v^j\vv v\cdot(g(D)-1)\Lam^s\vv v\bigr)_2\eqdefa I_{11}+I_{12}+I_{13}+I_{14}+I_{15}.
\end{aligned}\eeq

For $I_{11}$, integration by parts yields
\beno\begin{aligned}
&|I_{11}|\lesssim\epsilon\|g(D)\bigl([\Lam^s,\vv v]\cdot\na\z+[\Lam^s,\z]\na\cdot\vv v\bigr)\|_{L^2}\|(1+c\mu\D)\Lam^s\z\|_{L^2}\\
&\quad
+|c|\epsilon\mu\|g(D)\na\bigl([\Lam^s,\vv v]\cdot\na\z+[\Lam^s,\z]\na\cdot\vv v\bigr)\|_{L^2}\|(1+c\mu\D)\na\Lam^s\z\|_{L^2},
\end{aligned}\eeno
By virtue of \eqref{g 1} and \eqref{commutator 1}, noticing that $s\geq t_0+2>3$, we have
\beno\begin{aligned}
&\|g(D)\bigl([\Lam^s,\vv v]\cdot\na\z\bigr)\|_{L^2}\lesssim\|[\Lam^s,\vv v]\cdot\na\z\|_{L^2}\lesssim\|\vv v\|_{H^{t_0+1}}\|\z\|_{H^s}+\|\vv v\|_{H^s}\|\z\|_{H^{t_0+1}}\lesssim\|\z\|_{H^s}\|\vv v\|_{H^s},\\
&\|g(D)\na([\Lam^s,\vv v]\cdot\na\z)\|_{L^2}\lesssim\|\vv v\|_{H^{t_0+1}}\|\z\|_{H^{s+1}}+\|\vv v\|_{H^{s+1}}\|\z\|_{H^{t_0+1}}\lesssim\|\vv v\|_{H^{s}}\|\z\|_{H^{s+1}}+\|\vv v\|_{H^{s+1}}\|\z\|_{H^{s}}.
\end{aligned}\eeno
Similar estimates hold for $\|g(D)\bigl([\Lam^s,\z]\na\cdot\vv v\bigr)\|_{L^2}$ and $\|g(D)\na\bigl([\Lam^s,\z]\na\cdot\vv v\bigr)\|_{L^2}$. Since
\beno\begin{aligned}
&\|(1+c\mu\D)\Lam^s\z\|_{L^2}\lesssim\|\z\|_{H^{s}}+\mu\|\z\|_{H^{s+2}},\quad\|\na(1+c\mu\D)\Lam^s\z\|_{L^2}\lesssim\|\z\|_{H^{s+1}}+\mu\|\z\|_{H^{s+3}},
\end{aligned}\eeno
we have
\beno\begin{aligned}
&|I_{11}|\lesssim\epsilon\|\z\|_{H^s}\|\vv v\|_{H^s}\bigl(\|\z\|_{H^s}+\mu\|\z\|_{H^{s+2}}\bigr)\\
&\quad
+\epsilon\bigl(\|\vv v\|_{H^{s}}\cdot\mu^{\f12}\|\z\|_{H^{s+1}}+\mu^{\f12}\|\vv v\|_{H^{s+1}}\cdot\|\z\|_{H^{s}}\bigr)
\bigl(\mu^{\f12}\|\z\|_{H^{s+1}}+\mu^{\f32}\|\z\|_{H^{s+3}}\bigr)
\end{aligned}\eeno
which along with \eqref{interpolation} implies
\beq\label{est 5}
|I_{11}|\lesssim\epsilon\|\vv v\|_{X^s_{\mu^3}}\|\z\|_{X^s_{\mu^3}}^2.
\eeq

For $I_{14}$,  we first have
\beno\begin{aligned}
&|I_{14}|\lesssim\epsilon\sum_{j=1,2}\bigl(\|g(D)^{\f12}(1+c\mu\D)\bigl([\Lam^s,\vv v]\cdot\p_j\vv v\bigr)\|_{L^2}\|g(D)^{\f12}A(D)\Lam^sv^j\|_{L^2}.
\end{aligned}\eeno
Recalling that
$
A(D)=1+a\mu\D+\f{1}{\gamma}\sqrt{\f{\mu}{\mu_2}}\sigma(D)
+\f{1}{\gamma^2}\f{\mu}{\mu_2}\sigma(D)^2,
$
using \eqref{sigma 1} and \eqref{interpolation}, we have
\beq\label{est 6}
\|A(D)f\|_{L^2}\lesssim\|f\|_{L^2}+\mu^{\f12}\|\na f\|_{L^2}+\mu\|\na^2 f\|_{L^2}\lesssim\|f\|_{X^s_{\mu^2}}.
\eeq

Following a similar derivation as \eqref{est 5}, using \eqref{commutator 1}, \eqref{g 1},  \eqref{est 6} and \eqref{interpolation}, we arrive at
\beq\label{est 7}
|I_{14}|\lesssim\epsilon\|\vv v\|_{X^s_{\mu^3}}^3.
\eeq

Similar estimates as \eqref{est 5} and \eqref{est 7} hold for $I_{12}$, $I_{13}$ and $I_{15}$. Then we get
\beno
|\bigl([\Lam^s,M(\vv V,D)]\vv V\,|\,S_{\vv V}(D)\Lam^s\vv V\bigr)_2|\lesssim\epsilon(1+\epsilon\|\vv v\|_{L^\infty})^2\bigl(\|\z\|_{X^s_{\mu^3}}+\|\vv v\|_{X^s_{\mu^3}}\bigr)^3.
\eeno
The same estimate holds for $\bigl([\Lam^s,M(\vv V,D)]\vv V\,|\,S_{\vv V}(D)^*\Lam^s\vv V\bigr)_2$. Using \eqref{equivalent 1}, we obtain
\beq\label{estimate for I1}
|I_1|\lesssim\epsilon\bigl(1+\epsilon^2 E_s(\vv V)\bigr)\bigl(E_s(\vv V)\bigr)^{\f32}.
\eeq

\smallskip

{\it Step 1.2. Estimate on $I_2$.} In order to estimate $I_2$, we first calculate $S_{\vv V}(D)M(\vv V,D)\eqdefa\mathcal{A}_{\vv V}(D)\eqdefa(a_{ij})_{i,j=1,2,3}$ as follows:
\beq\label{expression of A}\begin{aligned}
&a_{11}=-\epsilon\gamma(1-\gamma)^2[(1+c\mu\D)^2g(D)(\vv v\cdot\na)+g(D)\bigl(\vv v\cdot\na(1+c\mu\D)^2g(D)\bigr)]\\
&\qquad\eqdefa
-\epsilon\gamma(1-\gamma)^2(a_{111}+a_{112}),\\
&a_{12}=\gamma(1-\gamma)^2(1+c\mu\D)^2g(D)\bigl((A(D)-\epsilon\z)\p_1\bigr)+\epsilon^2(1-\gamma)g(D)\bigl(\vv v\cdot(1+c\mu\D)g(D)(v^1\na)\bigr)\\
&\qquad
\eqdefa a_{121}+a_{122},\\
&a_{13}=\gamma(1-\gamma)^2(1+c\mu\D)^2g(D)\bigl((A(D)-\epsilon\z)\p_2\bigr)+\epsilon^2(1-\gamma)g(D)\bigl(\vv v\cdot(1+c\mu\D)g(D)(v^2\na)\bigr)\\
&a_{21}=\gamma(1-\gamma)^2(A(D)-\epsilon\z)(1+c\mu\D)^2g(D)\p_1+\epsilon^2(1-\gamma)g(D)\bigl(v^1(1+c\mu\D)(\vv v\cdot\na)\bigr)\\
&\qquad+\epsilon^2(1-\gamma)v^1\vv v\cdot\na(g(D)-1)g(D)(1+c\mu\D)\eqdefa a_{211}+a_{212}+a_{213},\\
&a_{22}=-\epsilon(1-\gamma)g(D)\bigl[v^1(1+c\mu\D)\bigl((A(D)-\epsilon\z)\p_1\bigr)\bigr]
-\epsilon(1-\gamma)(A(D)-\epsilon\z)(1+c\mu\D)g(D)(v^1\p_1)\\
&\qquad-\f{\epsilon^3}{\gamma}v^1\vv v\cdot(g(D)-1)g(D)(v^1\na)
\eqdefa a_{221}+a_{222}+a_{223},\\
&a_{23}=-\epsilon(1-\gamma)g(D)\bigl[v^1(1+c\mu\D)\bigl((A(D)-\epsilon\z)\p_2\bigr)\bigr]
-\epsilon(1-\gamma)(A(D)-\epsilon\z)(1+c\mu\D)g(D)(v^2\p_1)\\
&\qquad-\f{\epsilon^3}{\gamma}v^1\vv v\cdot(g(D)-1)g(D)(v^2\na)
\eqdefa a_{231}+a_{232}+a_{233}\\
&a_{31}=\gamma(1-\gamma)^2(A(D)-\epsilon\z)(1+c\mu\D)^2g(D)\p_2+\epsilon^2(1-\gamma)g(D)\bigl(v^2(1+c\mu\D)(\vv v\cdot\na)\bigr)\\
&\qquad+\epsilon^2(1-\gamma)v^2\vv v\cdot\na(g(D)-1)g(D)(1+c\mu\D),\\
&a_{32}=
-\epsilon(1-\gamma)(A(D)-\epsilon\z)(1+c\mu\D)g(D)(v^1\p_2)-\epsilon(1-\gamma)g(D)\bigl[v^2(1+c\mu\D)\bigl((A(D)-\epsilon\z)\p_1\bigr)\bigr]\\
&\qquad-\f{\epsilon^3}{\gamma}v^2\vv v\cdot(g(D)-1)g(D)(v^1\na)
\eqdefa a_{321}+a_{322}+a_{323},\\
&a_{33}=-\epsilon(1-\gamma)g(D)\bigl[v^2(1+c\mu\D)\bigl((A(D)-\epsilon\z)\p_2\bigr)\bigr]
-\epsilon(1-\gamma)(A(D)-\epsilon\z)(1+c\mu\D)g(D)(v^2\p_2)\\
&\qquad-\f{\epsilon^3}{\gamma}v^2\vv v\cdot(g(D)-1)g(D)(v^2\na).
\end{aligned}\eeq

The expression of $\mathcal{A}_{\vv V}(D)$ shows that the principal part of $i\mathcal{A}_{\vv V}(D)$ is symmetric.
Now, we estimate $\bigl(S_{\vv V}(D)M(\vv V,D)\Lam^s\vv V\,|\,\Lam^s\vv V\bigr)_2=\bigl(\mathcal{A}_{\vv V}(D)\Lam^s\vv V\,|\,\Lam^s\vv V\bigr)_2$ term by term.

\smallskip

{\it For $a_{11}$,} we have
\beno
\bigl(a_{11}\Lam^s\z\,|\,\Lam^s\z\bigr)_2
=-\epsilon\gamma(1-\gamma)^2\{\bigl(a_{111}\Lam^s\z\,|\,\Lam^s\z\bigr)_2+\bigl(a_{112}\Lam^s\z\,|\,\Lam^s\z\bigr)_2\}.
\eeno

Using the expression of $a_{111}$, integrating by parts, we have
\beq\label{est 8}\begin{aligned}
&\bigl(a_{111}\Lam^s\z\,|\,\Lam^s\z\bigr)_2=\bigl(g(D)^{\f12}\bigl([c\mu\D,\vv v]\cdot\na\Lam^s\z\bigr)\,|\,g(D)^{\f12}(1+c\mu\D)\Lam^s\z\bigr)_2\\
&\quad+
\bigl([g(D)^{\f12},\vv v]\cdot\na(1+c\mu\D)\Lam^s\z\,|\,g(D)^{\f12}(1+c\mu\D)\Lam^s\z\bigr)_2\\
&\quad+\bigl(\vv v\cdot\na g(D)^{\f12}(1+c\mu\D)\Lam^s\z\,|\,g(D)^{\f12}(1+c\mu\D)\Lam^s\z\bigr)_2.
\end{aligned}\eeq

Integration by parts yields that the last term in \eqref{est 8} equals
\beno\begin{aligned}
&-\f12\bigl(\na\cdot\vv v g(D)^{\f12}(1+c\mu\D)\Lam^s\z\,|\,g(D)^{\f12}(1+c\mu\D)\Lam^s\z\bigr)_2
\end{aligned}\eeno
which along with \eqref{est 8} and \eqref{g 1} implies
\beno\begin{aligned}
&|\bigl(a_{111}\Lam^s\z\,|\,\Lam^s\z\bigr)_2|\lesssim\mu\|[\D,\vv v]\cdot\na\Lam^s\z\|_{L^2}\|\z\|_{X^s_{\mu^2}}
+\|[g(D)^{\f12},\vv v]\cdot\na(1+c\mu\D)\Lam^s\z\|_{L^2}\|\z\|_{X^{s}_{\mu^2}}
+\|\na\vv v\|_{L^\infty}\|\z\|_{X^{s}_{\mu^2}}^2.
\end{aligned}\eeno
Thanks to \eqref{commutator of g}, we have
\beno\begin{aligned}
&\|[g(D)^{\f12},\vv v]\cdot\na(1+c\mu\D)\Lam^s\z\|_{L^2}\lesssim\|\vv v\|_{H^{t_0+1}}\|\z\|_{X^s_{\mu^2}}.
\end{aligned}\eeno

Since
\beno
\mu\|[\D,\vv v]\cdot\na\Lam^s\z\|_{L^2}\lesssim\mu\|\vv v\|_{H^{t_0+2}}\|\z\|_{H^{s+1}}+\mu\|\vv v\|_{H^{t_0+1}}\|\z\|_{H^{s+2}}
\eeno
using \eqref{interpolation}, we get
\beno
|\bigl(a_{111}\Lam^s\z\,|\,\Lam^s\z\bigr)_2|\lesssim\|\vv v\|_{H^{t_0+2}}\|\z\|_{X^s_{\mu^3}}^2
\lesssim\|\vv v\|_{H^s}\|\z\|_{X^s_{\mu^3}}^2.
\eeno
The same estimate holds for $\bigl(a_{112}\Lam^s\z\,|\,\Lam^s\z\bigr)_2$. Then we obtain
\beq\label{est 9}
|\bigl(a_{11}\Lam^s\z\,|\,\Lam^s\z\bigr)_2|\lesssim\epsilon\|\vv v\|_{H^{t_0+2}}\|\z\|_{X^s_{\mu^3}}^2
\lesssim\epsilon\|\vv v\|_{H^s}\|\z\|_{X^s_{\mu^3}}^2.
\eeq

\smallskip

{\it For $a_{12}$ and $a_{21}$,} we have
\beno
\bigl(a_{12}\Lam^sv^1\,|\,\Lam^s\z\bigr)_2+\bigl(a_{21}\Lam^s\z\,|\,\Lam^sv^1\bigr)_2
=\bigl((a_{12}^*+a_{21})\Lam^s\z\,|\,\Lam^s v^1\bigr)_2,
\eeno
where $a_{12}^*$ is  the adjoint operator of $a_{12}$. By the expression of $a_{12}$, we first have
\beno\begin{aligned}
&a_{121}^*=-\gamma(1-\gamma)^2\p_1\bigl[(A(D)-\epsilon\z)(1+c\mu\D)^2g(D)\bigr]\\
&a_{122}^*=-\epsilon^2(1-\gamma)
\na\cdot\bigl[v^1(1+c\mu\D)g(D)(\vv v g(D))\bigr].
\end{aligned}\eeno
Due to the expression of  $a_{21}$, we have
\beno\begin{aligned}
&a_{121}^*+a_{211}=\epsilon\gamma(1-\gamma)^2\p_1\z(1+c\mu\D)^2g(D),
\end{aligned}\eeno
which implies
\beq\label{est 10a}\begin{aligned}
&\bigl((a_{121}^*+a_{211})\Lam^s\z\,|\,\Lam^s v^1\bigr)_2=\epsilon\gamma(1-\gamma)^2\bigl((1+c\mu\D)g(D)\Lam^s\z\,|\,(1+c\mu\D)(\p_1\z\Lam^s v^1)\bigr)_2.
\end{aligned}\eeq
Noticing that $s\geq t_0+2>3$, using \eqref{g 1} and \eqref{interpolation}, we have
\beq\label{est 10}
|\bigl((a_{121}^*+a_{211})\Lam^s\z\,|\,\Lam^s v^1\bigr)_2|\lesssim\epsilon\|\z\|_{X^{t_0+1}_{\mu^2}}\|\z\|_{X^s_{\mu^2}}\|\vv v\|_{X^s_{\mu^2}}\lesssim\epsilon\|\z\|_{X^s_{\mu^3}}^2\|\vv v\|_{X^s_{\mu^3}}.
\eeq

Since
\beno\begin{aligned}
a_{122}^*&=-
\epsilon^2(1-\gamma)\na\cdot\bigl[v^1(1+c\mu\D)g(D)(\vv v )\bigr]-
\epsilon^2(1-\gamma)\na\cdot\bigl[v^1(1+c\mu\D)g(D)\bigl(\vv v(g(D)-1)\bigr)\bigr]\\
&\eqdefa a_{122,1}^*+a_{122,2}^*,
\end{aligned}\eeno
we have
\beq\label{est 22a}\begin{aligned}
&\f{1}{\epsilon^2(1-\gamma)}(a_{122,1}^*+a_{212})=-
\na v^1\cdot(1+c\mu\D)g(D)(\vv v\cdot )-
v^1(1+c\mu\D)g(D)(\na\cdot\vv v \cdot)\\
&\quad+[g(D),v^1]\bigl((1+c\mu\D)(\vv v\cdot\na)\bigr),
\end{aligned}\eeq
and
\beq\label{est 22b}\begin{aligned}
&\f{1}{\epsilon^2(1-\gamma)}(a_{122,2}^*+a_{213})
=-\na v^1\cdot(1+c\mu\D)g(D)\bigl(\vv v(g(D)-1)\bigr)\\
&\quad-v^1(1+c\mu\D)g(D)\bigl(\na\cdot\vv v(g(D)-1)\bigr)
-v^1[(1+c\mu\D)g(D),\vv v]\cdot\na(g(D)-1),
\end{aligned}\eeq
which long with \eqref{g 1}, \eqref{g 2}, \eqref{commutator of g} and \eqref{interpolation} implies
\beno\begin{aligned}
&\|(a_{122,1}^*+a_{212})\Lam^s\z\|_{L^2}
+\|(a_{122,2}^*+a_{213})\Lam^s\z\|_{L^2}\lesssim\epsilon^2\|v^1\|_{H^{t_0+1}}\|\vv v\|_{X^{t_0+1}_{\mu^2}}\|\z\|_{X^s_{\mu^2}},
\end{aligned}\eeno
where we used the formula
\beno
[(1+c\mu\D)g(D),\vv v]=(1+c\mu\D)\bigl([g(D),\vv v]\bigr)+c\mu[\D,\vv v]g(D).
\eeno
Since $s\geq t_0+2>3$, using \eqref{interpolation} again, we have
\beq\label{est 11}
|\bigl((a_{122}^*+a_{212}+a_{213})\Lam^s\z\,|\,\Lam^s v^1\bigr)_2|\lesssim\epsilon^2\|\vv v\|_{X^{t_0+1}_{\mu^2}}^2\|\z\|_{X^s_{\mu^2}}\|v^1\|_{H^s}\lesssim\epsilon^2\|\z\|_{X^s_{\mu^3}}\|\vv v\|_{X^s_{\mu^3}}^3.
\eeq

Thanks to \eqref{est 10} and \eqref{est 11}, we have
\beq\label{est 12}
|\bigl(a_{12}\Lam^sv^1\,|\,\Lam^s\z\bigr)_2+\bigl(a_{21}\Lam^s\z\,|\,\Lam^sv^1\bigr)_2|
\lesssim\epsilon\bigl(1+\epsilon\|\vv v\|_{X^s_{\mu^3}}\bigr)\|\vv v\|_{X^s_{\mu^3}}\bigl(\|\z\|_{X^s_{\mu^3}}^2+\|\vv v\|_{X^s_{\mu^3}}^2\bigr).
\eeq
The same estimate holds for $\bigl(a_{13}\Lam^sv^2\,|\,\Lam^s\z\bigr)_2+\bigl(a_{31}\Lam^s\z\,|\,\Lam^sv^2\bigr)_2$.

\smallskip

{\it For $a_{22}$,} we first estimate $\bigl(a_{221}\Lam^sv^1\,|\,\Lam^sv^1\bigr)_2$. Using the expression of $a_{221}$, we have
\beno\begin{aligned}
&-\f{1}{1-\gamma}\bigl(a_{221}\Lam^sv^1\,|\,\Lam^sv^1\bigr)_2
=\epsilon\bigl(g(D)\bigl(v^1A(D)\p_1\Lam^sv^1\bigr)\,|\,\Lam^sv^1\bigr)_2
+c\epsilon\mu\bigl(g(D)\bigl(v^1\D A(D)\p_1\Lam^sv^1\bigr)\,|\,\Lam^sv^1\bigr)_2\\
&\quad
-\epsilon^2\bigl(g(D)\bigl(v^1\z\p_1\Lam^sv^1\bigr)\,|\,\Lam^sv^1\bigr)_2
-c\epsilon^2\mu\bigl(g(D)\bigl(v^1\D(\z\p_1\Lam^sv^1)\bigr)\bigr]\,|\,\Lam^sv^1\bigr)_2\eqdefa B_{11}+B_{12}+B_{13}+B_{14}.
\end{aligned}\eeno

For $B_{11}$, using the expression of $A(D)$ in \eqref{notation}, we have
\beno\begin{aligned}
&B_{11}=\epsilon\bigl(g(D)^{\f12}\bigl(v^1\p_1\Lam^sv^1\bigr)\,|\,g(D)^{\f12}\Lam^sv^1\bigr)_2
+a\epsilon\mu\bigl(g(D)^{\f12}\bigl(v^1\D\p_1\Lam^sv^1\bigr)\,|\,g(D)^{\f12}\Lam^sv^1\bigr)_2\\
&\quad
+\f{\epsilon}{\gamma}\sqrt{\f{\mu}{\mu_2}}\bigl(g(D)^{\f12}\bigl(v^1\sigma(D)\p_1\Lam^sv^1\bigr)\,|\,g(D)^{\f12}\Lam^sv^1\bigr)_2
+\f{\epsilon}{\gamma^2}\f{\mu}{\mu_2}\bigl(g(D)^{\f12}\bigl(v^1\sigma(D)^2\p_1\Lam^sv^1\bigr)\,|\,g(D)^{\f12}\Lam^sv^1\bigr)_2\\
&\quad\eqdefa B_{11,1}+B_{11,2}+B_{11,3}+B_{11,4}.
\end{aligned}\eeno

A direct calculation shows that
\beno\begin{aligned}
&\gamma^2\f{\mu_2}{\epsilon\mu}B_{11,4}=\bigl([g(D)^{\f12},v^1]\sigma(D)^2\p_1\Lam^sv^1\,|\,g(D)^{\f12}\Lam^sv^1\bigr)_2\\
&\quad-\bigl([\sigma(D),v^1]\sigma(D)\p_1g(D)^{\f12}\Lam^sv^1\,|\,g(D)^{\f12}\Lam^sv^1\bigr)_2
+\bigl(v^1\p_1\sigma(D)g(D)^{\f12}\Lam^sv^1\,|\,\sigma(D)g(D)^{\f12}\Lam^sv^1\bigr)_2.
\end{aligned}\eeno
Integrating by parts for the last term of $\gamma^2\f{\mu_2}{\epsilon\mu}B_{11,4}$, we see that it equals
\beno
-\f12\bigl(\p_1v^1\sigma(D)g(D)^{\f12}\Lam^sv^1\,|\,\sigma(D)g(D)^{\f12}\Lam^sv^1\bigr)_2.
\eeno
Using \eqref{g 1} and \eqref{sigma 1}, we have
\beq\label{est 13}\begin{aligned}
&|B_{11,4}|\lesssim\epsilon\mu\bigl(\|[g(D)^{\f12},v^1]\sigma(D)^2\p_1\Lam^sv^1\|_{L^2}
+\|[\sigma(D),v^1]\sigma(D)\p_1g(D)^{\f12}\Lam^sv^1\|_{L^2}\bigr)\|v^1\|_{H^s}\\
&\qquad+\epsilon\mu\|\p_1v^1\|_{L^\infty}\|v^1\|_{H^{s+1}}^2.
\end{aligned}\eeq

Thanks to \eqref{commutator of g}, \eqref{commutator of sigma}, \eqref{g 1} and \eqref{sigma 1}, we have
\beno\begin{aligned}
&\|[g(D)^{\f12},v^1]\sigma(D)^2\p_1\Lam^sv^1\|_{L^2}\lesssim\|v^1\|_{H^{t_0+1}}\|\sigma(D)^2\p_1\Lam^sv^1\|_{H^{-1}}
\lesssim\|v^1\|_{H^s}\|v^1\|_{H^{s+2}},\\
&\|[\sigma(D),v^1]\sigma(D)\p_1g(D)^{\f12}\Lam^sv^1\|_{L^2}\lesssim\|v^1\|_{H^{t_0+1}}\|\sigma(D)\p_1g(D)^{\f12}\Lam^sv^1\|_{L^2}
\lesssim\|v^1\|_{H^s}\|v^1\|_{H^{s+2}}
\end{aligned}\eeno
which along with \eqref{interpolation} and \eqref{est 13} imply
\beno\begin{aligned}
&|B_{11,4}|\lesssim\epsilon\mu\|v^1\|_{H^{s+2}}\cdot\|v^1\|_{H^s}^2+\epsilon\mu\|v^1\|_{H^{s+1}}^2\cdot\|v^1\|_{H^s}
\lesssim\epsilon\|v^1\|_{X^{s}_{\mu^3}}^3.
\end{aligned}\eeno
Similar estimates hold for $B_{11,1}$, $B_{11,2}$ and $B_{11,3}$. Then we obtain
\beq\label{est 14}
\begin{aligned}
&|B_{11}|\lesssim\epsilon\|v^1\|_{H^s}\|v^1\|_{X^{s}_{\mu^2}}^2\lesssim\epsilon\|v^1\|_{X^{s}_{\mu^3}}^3.
\end{aligned}
\eeq

Following similar derivation as \eqref{est 14}, we have
\beno
\begin{aligned}
&|B_{12}|\lesssim\epsilon\|v^1\|_{X^{t_0+1}_{\mu^2}}\|v^1\|_{X^{s}_{\mu^3}}^2\lesssim\epsilon\|v^1\|_{X^{s}_{\mu^3}}^3,\\
&|B_{13}|+|B_{14}|\lesssim\epsilon^2\|v^1\|_{H^{t_0+1}}\|\z\|_{H^{t_0+2}}\|v^1\|_{X^{s}_{\mu^2}}^2
\lesssim\epsilon^2\|\z\|_{H^s}\|v^1\|_{H^s}\|v^1\|_{X^{s}_{\mu^3}}^2,
\end{aligned}
\eeno
which along with \eqref{est 14} imply
\beq\label{est 15 a}
|\bigl(a_{221}\Lam^sv^1\,|\,\Lam^s v^1\bigr)_2|
\lesssim\epsilon\bigl(1+\epsilon\|\z\|_{X^s_{\mu^3}}\bigr)\|\vv v\|_{X^s_{\mu^3}}^3.
\eeq
Similarly, we have
\beno\begin{aligned}
&|\bigl(a_{222}\Lam^sv^1\,|\,\Lam^s v^1\bigr)_2|
\lesssim\epsilon\bigl(1+\epsilon\|\z\|_{X^s_{\mu^3}}\bigr)\|\vv v\|_{X^s_{\mu^3}}^3,\\
&|\bigl(a_{223}\Lam^sv^1\,|\,\Lam^s v^1\bigr)_2|
\lesssim\epsilon\bigl(1+\epsilon^2\|\z\|_{X^s_{\mu^3}}^2\bigr)\|\vv v\|_{X^s_{\mu^3}}^3,
\end{aligned}\eeno
which along with \eqref{est 15 a} implies
\beq\label{est 15}
|\bigl(a_{22}\Lam^sv^1\,|\,\Lam^s v^1\bigr)_2|
\lesssim\epsilon\bigl(1+\epsilon\|\z\|_{X^s_{\mu^3}}\bigr)\|\vv v\|_{X^s_{\mu^3}}^3.
\eeq
The same estimate holds for $\bigl(a_{33}\Lam^sv^2\,|\,\Lam^sv^2\bigr)_2$.

\smallskip

{\it For $a_{23}$ and $a_{32}$,} we have
\beno
\bigl(a_{23}\Lam^sv^2\,|\,\Lam^sv^1\bigr)_2+\bigl(a_{32}\Lam^sv^1\,|\,\Lam^sv^2\bigr)_2
=\bigl((a_{23}^*+a_{32})\Lam^sv^1\,|\,\Lam^s v^2\bigr)_2,
\eeno
where $a_{23}^*$ is  the adjoint operator of $a_{23}$. By the expression of $a_{23}=a_{231}+a_{232}+a_{233}$, we first have
\beno
\begin{aligned}
&a_{231}^*=\epsilon(1-\gamma)\p_2\bigl[\bigl(A(D)-\epsilon\z\bigr)(1+c\mu\D)\bigl(v^1g(D)\bigr)\bigr],\\
&a_{232}^*=\epsilon(1-\gamma)\p_1\bigl[v^2g(D)(1+c\mu\D)\bigl((A(D)-\epsilon\z)\cdot\bigr)\bigr],\\
&a_{233}^*=\f{\epsilon^3}{\gamma}\na\cdot\bigl[v^2g(D)(g(D)-1)(\vv vv^1\cdot)\bigr],
\end{aligned}
\eeno
which along with the expression of $a_{32}$ imply
\beq\label{est 23a}\begin{aligned}
&a_{231}^*+a_{321}=-\epsilon^2(1-\gamma)\p_2\z(1+c\mu\D)\bigl(v^1g(D)\bigr)
+\epsilon(1-\gamma)\bigl(A(D)-\epsilon\z\bigr)(1+c\mu\D)\bigl(\p_2v^1g(D)\bigr)\\
&\qquad
-\epsilon(1-\gamma)(A(D)-\epsilon\z)(1+c\mu\D)\bigl([g(D),\,v^2]\p_2\bigr),\\
&a_{232}^*+a_{322}=\epsilon(1-\gamma)\p_1v^2g(D)(1+c\mu\D)\bigl((A(D)-\epsilon\z)\cdot\bigr)
-\epsilon^2(1-\gamma)v^2(1+c\mu\D)g(D)\bigl(\p_1\z\cdot\bigr)\\
&\qquad
-\epsilon(1-\gamma)[g(D),\,v^2](1+c\mu\D)\bigl((A(D)-\epsilon\z)\p_1\bigr),\\
&a_{233}^*+a_{323}=\f{\epsilon^3}{\gamma}\{\na v^2\cdot g(D)(g(D)-1)(\vv vv^1\cdot)+ v^2g(D)(g(D)-1)\bigl(\na\cdot(\vv vv^1)\cdot\bigr)\\
&\qquad
+v^2[g(D)(g(D)-1),\vv v]\cdot (v^1\na)\}.
\end{aligned}\eeq

Thanks to \eqref{notation}, \eqref{g 1}, \eqref{est 6}, \eqref{commutator of g} and \eqref{interpolation}, we have
\beno\begin{aligned}
&|\bigl((a_{232}^*+a_{322})\Lam^s v^1\,|\,\Lam^s v^2\bigr)_2|\lesssim\epsilon\|(A(D)-\epsilon\z)\Lam^s v^1\|_{L^2}\|(1+c\mu\D)(\p_1v^2\Lam^s v^2)\|_{L^2}\\
&\qquad
+\epsilon^2\|v^2(1+c\mu\D)g(D)\bigl(\p_1\z\Lam^s v^1\bigr)\|_{L^2}\|\Lam^s v^2\|_{L^2}\\
&\qquad
+\epsilon\|((A(D)-\epsilon\z)\p_1\Lam^s v^1\|_{H^{-1}}\|(1+c\mu\D)\bigl([g(D),\,v^2]\Lam^s v^2\bigr)\|_{H^1}\\
&\lesssim\epsilon\bigl(1+\epsilon\|\z\|_{H^s}+\epsilon\|v^2\|_{H^s}\bigr)\|v^1\|_{X^s_{\mu^2}}\|v^2\|_{X^s_{\mu^2}}^2,
\end{aligned}\eeno
where we also used the fact $s\geq t_0+2>3$. Similar estimates hold for $\bigl((a_{231}^*+a_{321})\Lam^s v^1\,|\,\Lam^s v^2\bigr)_2$
and $\bigl((a_{233}^*+a_{323})\Lam^s v^1\,|\,\Lam^s v^2\bigr)_2$. Using \eqref{interpolation}, we have
\beq\label{est 16}
|\bigl(a_{23}\Lam^sv^2\,|\,\Lam^sv^1\bigr)_2+\bigl(a_{32}\Lam^sv^1\,|\,\Lam^sv^2\bigr)_2|
\lesssim\epsilon\bigl(1+\epsilon\|\z\|_{X^s_{\mu^3}}+\epsilon\|\vv v\|_{X^s_{\mu^3}}+\epsilon^2\|\vv v\|_{X^s_{\mu^3}}^2\bigr)\|\vv v\|_{X^s_{\mu^3}}^3.
\eeq

\smallskip

Thanks to \eqref{est 9}, \eqref{est 12} and \eqref{est 15}, we could obtain the estimate for $\bigl(S_{\vv V}(D)M(\vv V,D)\Lam^s\vv V\,|\,\Lam^s\vv V\bigr)_2=\bigl(\mathcal{A}_{\vv V}(D)\Lam^s\vv V\,|\,\Lam^s\vv V\bigr)_2$. Since the same estimate holds for
$\bigl(S_{\vv V}(D)^*M(\vv V,D)\Lam^s\vv V\,|\,\Lam^s\vv V\bigr)_2$, using \eqref{equivalent 1}, we arrive at
\beq\label{estimate for I2}
|I_2|\lesssim\epsilon\bigl(1+\epsilon^2 E_s(\vv V)\bigr)\bigl(E_s(\vv V)\bigr)^{\f32}.
\eeq

\smallskip

{\it Step 1.3. Estimate on $I$.}
Due to \eqref{estimate for I1} and \eqref{estimate for I2}, we obtain
 \beq\label{estimate for I}
|I|\lesssim\epsilon\bigl(1+\epsilon^2 E_s(\vv V)\bigr)\bigl(E_s(\vv V)\bigr)^{\f32}.
\eeq

\medskip

{\bf Step 2. Estimate on $II$.} Thanks to the expression of $S_{\vv V}(D)$, we have
\beno\begin{aligned}
&|II|\lesssim\mu\epsilon\|[\D,\vv v]g(D)\Lam^s\vv V\|_{H^1}\|(1+c\mu\D)\Lam^s\p_t\vv V\|_{H^{-1}}
+\mu\epsilon\|[\D,\z]\Lam^s\vv V\|_{H^1}\|(1+c\mu\D)\Lam^s\p_t\vv V\|_{H^{-1}}\\
&\qquad
+\mu\epsilon^2\sum_{i,j=1,2}\|[\D,v^iv^j]\Lam^s\vv V\|_{H^1}\|(g(D)-1)\Lam^s\p_t\vv V\|_{H^{-1}},
\end{aligned}\eeno
which along with \eqref{g 1},  \eqref{interpolation} and \eqref{equivalent 1}, noticing that $s\geq t_0+2>3$, we have
 \beq\label{estimate for II}\begin{aligned}
&|II|\lesssim\epsilon\bigl(1+\epsilon\|\vv V\|_{H^s}\bigr)\|\vv V\|_{X^s_{\mu^2}}^2\|\p_t\vv V\|_{X^{s-1}_{\mu^2}}\\
&\lesssim\epsilon\bigl(1+\epsilon (E_s(\vv V))^{\f12}\bigr)E_s(\vv V)\|\p_t\vv V\|_{X^{s-1}_{\mu^2}}.
\end{aligned}\eeq

\medskip

{\bf Step 3. Estimate on $III$.} Thanks to the expression of $S_{\vv V}(D)$,  using \eqref{g 1} and \eqref{g 2}, we have
\beno
|III|\lesssim\|(1-b\mu\D)\Lam^s\vv V\|_{L^2}\|\p_tS_{\vv V}(D)\Lam^s\vv V\|_{L^2}
\lesssim\epsilon(1+\|\vv v\|_{H^{t_0}})\|\p_t\vv V\|_{H^{t_0}}\|\vv V\|_{X^s_{\mu^2}}^2.
\eeno
Due to \eqref{interpolation} and \eqref{equivalent 1}, noticing that $s\geq t_0+2>3$, we have
 \beq\label{estimate for III}
|III|\lesssim\epsilon\bigl(1+\epsilon (E_s(\vv V))^{\f12}\bigr)E_s(\vv V)\|\p_t\vv V\|_{H^{s-2}}.
\eeq

\medskip

{\bf Step 4. The {\it a priori} energy estimate.}  Thanks to \eqref{estimate for I}, \eqref{estimate for II} and \eqref{estimate for III}, we deduce from \eqref{est 3} that
\beq\label{est 17}
\f{d}{dt} E_s(\vv V)\lesssim\epsilon\bigl(1+\epsilon^2 E_s(\vv V)\bigr)\bigl(E_s(\vv V)\bigr)^{\f32}
+\epsilon\bigl(1+\epsilon (E_s(\vv V))^{\f12}\bigr)E_s(\vv V)\|\p_t\vv V\|_{X^{s-1}_{\mu^2}}.
\eeq

Going back to the equation \eqref{B-FD 1}, using \eqref{g 1}, \eqref{product}, \eqref{est 6}, \eqref{interpolation} and \eqref{equivalent 1}, we have
\beno\begin{aligned}
&\|\p_t\vv V\|_{X^{s-1}_{\mu^2}}\sim\|(1-b\mu\D)\p_t\vv V\|_{H^{s-1}}\lesssim\|M(\vv V, D)\vv V\|_{H^{s-1}}\\
&\lesssim\bigl(1+\epsilon\|\vv V\|_{H^{s}}\bigr)\|\vv V\|_{X^{s}_{\mu^2}}\lesssim\bigl(1+\epsilon (E_s(\vv V))^{\f12}\bigr)(E_s(\vv V))^{\f12},
\end{aligned}\eeno
which along with \eqref{est 17} implies
\beq\label{priori 1}
\f{d}{dt} E_s(\vv V)\lesssim\epsilon\bigl(1+\epsilon^2 E_s(\vv V)\bigr)\bigl(E_s(\vv V)\bigr)^{\f32}.
\eeq
This is exactly \eqref{priori energy estimate}.

\subsection{A priori estimates for the case: $ b>0,\, d=0, \, a\leq 0,\, c<0$.}
In this case, one could check that
\beq\label{equivalent 2}
E_s(\vv V)\sim\mathcal{E}_s{(t)}\eqdefa\|\z(t)\|_{X^s_{\mu^4}}^2+\|\vv v(t)\|_{X^s_{\mu^{3}}}^2
\eeq
for any $\epsilon\leq\wt\epsilon_1$ and $\mu\leq\wt\mu$ with $\wt\epsilon_1$ and $\wt\mu$ being sufficiently small. We postpone the proof of \eqref{equivalent 2} to the Appendix.

Since the proof of \eqref{priori energy estimate} of this case is similar to that of case $b\neq d,\, b>0,\, d>0, \, a\leq 0,\, c<0$, we only sketch it.

A direct energy estimate shows that
\beq\label{est 18}\begin{aligned}
&\f{d}{dt}E_s(\vv V)=\bigl((1-b\mu\Delta)\Lam^s\p_t\vv V\,|\,(S_{\vv V}(D)+S_{\vv V}(D)^*)\Lam^s\vv V\bigr)_2\\
&\quad-b\mu([S_{\vv V}(D)^*, \Delta]\Lam^s\vv V\,|\,\Lam^s\p_t\vv V)_2
+\bigl((1-b\mu\Delta)\Lam^s\vv V\,|\,\p_tS_{\vv V}(D)\Lam^s\vv V\bigr)_2\\
&\eqdefa I+II+III,
\end{aligned}\eeq
where $S_{\vv V}(D)^*$ is the adjoint operator of $S_{\vv V}(D)$.

{\bf Step 1. Estimate on $I$. } Using \eqref{B-FD 1}, we have
\beq\label{est 19}\begin{aligned}
&I=-\bigl([\Lam^s,M(\vv V,D)]\vv V\,|\,(S_{\vv V}(D)+S_{\vv V}(D)^*)\Lam^s\vv V\bigr)_2\\
&\quad
-\bigl((S_{\vv V}(D)+S_{\vv V}(D)^*)\bigl(M(\vv V,D)\Lam^s\vv V\Bigr)\,|\,\Lam^s\vv V\bigr)_2\eqdefa I_1+I_2.
\end{aligned}\eeq

{\it Step 1.1. Estimate on $I_1$.} Firstly, notice that \eqref{est a} also holds for the present case with $g(D)=1-b\mu\D$.
Similar derivation as \eqref{estimate for I1}, we have
\beq\label{estimate for I1 d=0}\begin{aligned}
|I_1|&\lesssim\epsilon\bigl(1+\epsilon\|\z\|_{X^s_{\mu^4}}+\epsilon\|\vv v\|_{X^s_{\mu^3}}\bigr)^2\|\vv v\|_{X^s_{\mu^3}}
\bigl(\|\z\|_{X^s_{\mu^4}}^2+\|\vv v\|_{X^s_{\mu^3}}^2\bigr)\lesssim\epsilon\bigl(1+\epsilon^2 E_s(\vv V)\bigr)\bigl(E_s(\vv V)\bigr)^{\f32}.
\end{aligned}\eeq

\smallskip

{\it Step 1.2. Estimate on $I_2$.} In order to estimate $I_2$, we first calculate $S_{\vv V}(D)M(\vv V,D)\eqdefa\mathcal{A}_{\vv V}(D)\eqdefa(a_{ij})_{i,j=1,2,3}$. We point out that $a_{ij}$ has the same expression as that in \eqref{expression of A} with $g(D)=1-b\mu\D$.

Now, we estimate $\bigl(S_{\vv V}(D)M(\vv V,D)\Lam^s\vv V\,|\,\Lam^s\vv V\bigr)_2=\bigl(\mathcal{A}_{\vv V}(D)\Lam^s\vv V\,|\,\Lam^s\vv V\bigr)_2$ term by term.

Following similar derivation as that of case $b\neq d,\,b>0,\,d>0,\,a\leq0,\,c<0$,  integrating by parts, we first have
\beq\label{est 20}\begin{aligned}
&|\bigl(a_{11}\Lam^s\z\,|\,\Lam^s\z\bigr)_2|\lesssim\epsilon\|\vv v\|_{X^s_{\mu^3}}\|\z\|_{X^s_{\mu^4}}^2,\\
&|\bigl(a_{22}\Lam^sv^1\,|\,\Lam^s v^1\bigr)_2|+|\bigl(a_{33}\Lam^sv^2\,|\,\Lam^s v^2\bigr)_2|
\lesssim\epsilon\bigl(1+\epsilon\|\z\|_{X^s_{\mu^4}}+\epsilon\|\vv v\|_{X^s_{\mu^3}}\bigr)^2\|\vv v\|_{X^s_{\mu^3}}^3.
\end{aligned}\eeq

{\it For $a_{12}$ and $a_{21}$}, firstly, noticing that \eqref{est 10a}  holds for $a_{121}^*+a_{212}$ with $g(D)=1-b\mu\D$, we have similar estimate as \eqref{est 10} as follows
\beq\label{est 21}
|\bigl((a_{121}^*+a_{211})\Lam^s\z\,|\,\Lam^s v^1\bigr)_2|\lesssim\epsilon\|\z\|_{X^{t_0+1}_{\mu^2}}\|\z\|_{X^s_{\mu^4}}\|\vv v\|_{X^s_{\mu^2}}\lesssim\epsilon\|\z\|_{X^s_{\mu^4}}^2\|\vv v\|_{X^s_{\mu^3}}.
\eeq

For $a_{122}^*+a_{212}+a_{213}$, there hold \eqref{est 22a} and \eqref{est 22b} with  $g(D)=1-b\mu\D$. Similarly as \eqref{est 11}, we have
\beno
|\bigl((a_{122}^*+a_{212}+a_{213})\Lam^s\z\,|\,\Lam^s v^1\bigr)_2|\lesssim\epsilon^2\|v^1\|_{H^{t_0+1}}\|\vv v\|_{X^{t_0+2}_{\mu^2}}\|\z\|_{X^s_{\mu^4}}\|v^1\|_{X^s_{\mu^2}}\lesssim\epsilon^2\|\z\|_{X^s_{\mu^4}}\|\vv v\|_{X^s_{\mu^3}}^3,
\eeno
which along with \eqref{est 21} implies
\beq\label{est 22}
|\bigl(a_{12}\Lam^sv^1\,|\,\Lam^s\z\bigr)_2+\bigl(a_{21}\Lam^s\z\,|\,\Lam^s v^1\bigr)_2|\lesssim\epsilon\bigl(1+\epsilon\|\vv v\|_{X^s_{\mu^3}}\bigr)\|\vv v\|_{X^s_{\mu^3}}\bigl(\|\z\|_{X^s_{\mu^4}}^2+\|\vv v\|_{X^s_{\mu^3}}^2\bigr).
\eeq
The same estimate holds for $\bigl(a_{13}\Lam^sv^2\,|\,\Lam^s\z\bigr)_2+\bigl(a_{31}\Lam^s\z\,|\,\Lam^s v^2\bigr)_2$.

{\it For $a_{23}$ and $a_{32}$}, there also holds \eqref{est 23a} with $g(D)=1-b\mu\D$. Then we get
 \beq\label{est 23}
|\bigl(a_{23}\Lam^sv^2\,|\,\Lam^sv^1\bigr)_2+\bigl(a_{32}\Lam^sv^1\,|\,\Lam^s v^2\bigr)_2|\lesssim\epsilon\bigl(1+\epsilon\|\z\|_{X^s_{\mu^4}}+\epsilon\|\vv v\|_{X^s_{\mu^3}}\bigr)^2\|\vv v\|_{X^s_{\mu^3}}\bigl(\|\z\|_{X^s_{\mu^4}}^2+\|\vv v\|_{X^s_{\mu^3}}^2\bigr).
\eeq

\smallskip

Thanks to \eqref{est 21}, \eqref{est 22} and \eqref{est 23}, we obtain the bound of $\bigl(S_{\vv V}(D)M(\vv V,D)\Lam^s\vv V\,|\,\Lam^s\vv V\bigr)_2$. The same estimate holds for $\bigl(S_{\vv V}(D)^*M(\vv V,D)\Lam^s\vv V\,|\,\Lam^s\vv V\bigr)_2$. Using \eqref{equivalent 2}, we arrive at
\beq\label{estimate for I2 d=0}\begin{aligned}
|I_2|\lesssim\epsilon\bigl(1+\epsilon^2 E_s(\vv V)\bigr)\bigl(E_s(\vv V)\bigr)^{\f32}.
\end{aligned}\eeq

Due to \eqref{estimate for I1 d=0} and \eqref{estimate for I2 d=0}, we obtain
\beq\label{estimate for I d=0}\begin{aligned}
|I|\lesssim\epsilon\bigl(1+\epsilon^2 E_s(\vv V)\bigr)\bigl(E_s(\vv V)\bigr)^{\f32}.
\end{aligned}\eeq

\medskip

{\bf Step 2. Estimate on $II$.} Thanks to the expression of $S_{\vv V}(D)$, we have
\beno\begin{aligned}
&II=b\epsilon(1-\gamma)\gamma\mu\bigl([\D,\,\vv v]\cdot(1-b\mu\D)\Lam^s\vv v\,|\,(1+c\mu\D)\Lam^s\p_t\z\bigr)_2
+b\epsilon(1-\gamma)\gamma\mu\bigl([\D,\,\vv v](1-b\mu\D)\Lam^s\z\,|\,(1+c\mu\D)\Lam^s\p_t\vv v\bigr)_2\\
&\qquad+b\epsilon(1-\gamma)\gamma\mu\bigl([\D,\,\z]\Lam^s\vv v\,|\,(1+c\mu\D)\Lam^s\p_t\vv v\bigr)_2
+b^2\epsilon^2\mu^2\sum_{i,j=1,2}\bigl([\D,\,v^iv^j]\Lam^sv^j\,|\,\D\Lam^s\p_tv^i\bigr)_2,
\end{aligned}\eeno
which along with \eqref{interpolation} implies
\beno\begin{aligned}
&|II|\lesssim\epsilon\|\vv v\|_{X^{t_0+1}_{\mu}}\|\vv v\|_{X^s_{\mu^3}}\|\p_t\z\|_{X^{s-1}_{\mu^3}}
+\epsilon\|\vv v\|_{X^{t_0+1}_{\mu}}\|\z\|_{X^s_{\mu^4}}\|\p_t\vv v\|_{X^{s-1}_{\mu^2}}\\
&\qquad+\epsilon\|\z\|_{X^{t_0+1}_{\mu}}\|\vv v\|_{X^s_{\mu^2}}\|\p_t\vv v\|_{X^{s-1}_{\mu^2}}
+\epsilon^2\|\vv v\|_{H^{t_0+1}}\|\vv v\|_{X^{t_0+1}_{\mu^2}}\|\vv v\|_{X^s_{\mu^2}}\|\p_t\vv v\|_{X^{s-1}_{\mu^2}}.
\end{aligned}\eeno
Since $s\geq t_0+2>3$, using \eqref{interpolation} and \eqref{equivalent 2}, we obtain
\beq\label{estimate for II d=0}\begin{aligned}
|II|\lesssim\epsilon\bigl(1+\epsilon\bigl(E_s(\vv V)\bigr)^{\f12}\bigr)E_s(\vv V)\bigl(\|\p_t\z\|_{X^{s-1}_{\mu^3}}+\|\p_t\vv v\|_{X^{s-1}_{\mu^2}}\bigr).
\end{aligned}\eeq

\medskip

{\bf Step 3. Estimate on $III$.} Thanks to the expression of $S_{\vv V}(D)$, we have
\beno\begin{aligned}
&III=-\epsilon(1-\gamma)\bigl((1-b\mu\D)^2\Lam^s\z\,|\,\p_t\vv v\cdot(1+c\mu\D)\Lam^s\vv v\bigr)_2
-\epsilon(1-\gamma)\bigl((1-b\mu\D)\Lam^s\vv v\,|\,(1-b\mu\D)\bigl(\p_t\vv v(1+c\mu\D)\Lam^s\z\bigr)\bigr)_2\\
&\qquad-\epsilon(1-\gamma)\gamma\bigl((1-b\mu\D)\Lam^s\vv v\,|\,\p_t\z(1+c\mu\D)\Lam^s\vv v\bigr)_2
-b\mu\epsilon\sum_{i,j=1,2}\bigl((1-b\mu\D)\Lam^sv^i\,|\,\p_t(v^iv^j)(1+c\mu\D)\Lam^sv^j\bigr)_2,
\end{aligned}\eeno
which along with \eqref{interpolation} implies
\beno\begin{aligned}
&|III|\lesssim\epsilon\|\z\|_{X^s_{\mu^4}}\|\vv v\|_{X^s_{\mu^3}}\|\p_t\z\|_{H^{t_0+1}}+\epsilon\bigl(1+\epsilon\|\vv v\|_{H^{t_0}}\bigr)\|\vv v\|_{X^s_{\mu^3}}^2\bigl(\|\p_t\z\|_{H^{t_0}}+\|\p_t\vv v\|_{H^{t_0}}\bigr).
\end{aligned}\eeno
Since $s\geq t_0+2>3$, using \eqref{interpolation} and \eqref{equivalent 2}, we obtain
\beq\label{estimate for III d=0}\begin{aligned}
|III|\lesssim\epsilon\bigl(1+\epsilon\bigl(E_s(\vv V)\bigr)^{\f12}\bigr)E_s(\vv V)\bigl(\|\p_t\z\|_{H^{s-1}}+\|\p_t\vv v\|_{H^{s-1}}\bigr).
\end{aligned}\eeq

\medskip

{\bf Step 4. The {\it a priori} energy estimate.}  Thanks to \eqref{estimate for I d=0}, \eqref{estimate for II d=0} and \eqref{estimate for III d=0}, we deduce from \eqref{est 18} that
\beq\label{est 24}
\f{d}{dt} E_s(\vv V)\lesssim\epsilon\bigl(1+\epsilon^2 E_s(\vv V)\bigr)\bigl(E_s(\vv V)\bigr)^{\f32}
+\epsilon\bigl(1+\epsilon (E_s(\vv V))^{\f12}\bigr)E_s(\vv V)\bigl(\|\p_t\z\|_{X^{s-1}_{\mu^3}}+\|\p_t\vv v\|_{X^{s-1}_{\mu^2}}\bigr).
\eeq

Going back to the equation \eqref{B-FD}, using \eqref{g 1}, \eqref{product}, \eqref{est 6} and \eqref{interpolation}, we have
\beno\begin{aligned}
&\|\p_t\z\|_{X^{s-1}_{\mu^3}}\lesssim\|\bigl(A(D)-\epsilon\z\bigr)\vv v\|_{X^{s-1}_{\mu}}
\lesssim\bigl(1+\epsilon\|\z\|_{X^s_\mu}\bigr)\|\vv v\|_{X^s_{\mu^3}},\\
&\|\p_t\vv v\|_{X^{s-1}_{\mu^2}}\lesssim\|(1+c\mu\D)\z\|_{X^{s}_{\mu^2}}+\epsilon\|\vv v\cdot\vv v\|_{X^{s}_{\mu^2}}
\lesssim\|\z\|_{X^s_{\mu^4}}+\epsilon\|\vv v\|_{X^s_{\mu^3}}^2,
\end{aligned}\eeno
which along with  \eqref{interpolation} and \eqref{equivalent 2} implies
\beq\label{est 25}
\|\p_t\z\|_{X^{s-1}_{\mu^3}}+\|\p_t\vv v\|_{X^{s-1}_{\mu^2}}\lesssim\bigl(1+\epsilon (E_s(\vv V))^{\f12}\bigr)(E_s(\vv V))^{\f12}.
\eeq

Due to \eqref{est 24} and \eqref{est 25}, we get
\beq\label{priori 2}
\f{d}{dt} E_s(\vv V)\lesssim\epsilon\bigl(1+\epsilon^2 E_s(\vv V)\bigr)\bigl(E_s(\vv V)\bigr)^{\f32}.
\eeq
This is exactly \eqref{priori energy estimate}.

\subsection{A priori estimates for the case: $ b=d=0, \, a\leq 0,\, c<0$.}
In this case, the equation \eqref{B-FD} is equivalent to the following condensed system
\beq\label{B-FD 3}
\p_t\vv V+M(\vv V,D)\vv V=\vv 0,
\eeq
where $M(\vv V,D)$ is defined in \eqref{M 1} with $g(D)=1$. The symmetrizer $S_{\vv V}(D)$ of $M(\vv V,D)$ is defined by \eqref{S for b=d>0}.

Defining the associated energy functional as
\beq\label{E 3}
E_s(\vv V)\eqdefa\bigl(\Lam^s\vv V\,|\,S_{\vv V}(D)\Lam^s\vv V\bigr)_2,
\eeq
one could check that
\beq\label{equivalent 3}
E_s(\vv V)\sim\mathcal{E}_s{(t)}\eqdefa\|\z(t)\|_{X^s_{\mu}}^2+\|\vv v(t)\|_{X^s_{\mu}}^2,
\eeq
for any $\epsilon\leq\wt\epsilon_1$ and $\mu\leq\wt\mu$ with $\wt\epsilon_1$ and $\wt\mu$ being sufficiently small. We postpone the proof of \eqref{equivalent 3} to the Appendix.

A direct energy estimate shows that
\beq\label{est 26}\begin{aligned}
&\f{d}{dt}E_s(\vv V)=\bigl(\Lam^s\p_t\vv V\,|\,(S_{\vv V}(D)+S_{\vv V}(D)^*)\Lam^s\vv V\bigr)_2
+\bigl(\Lam^s\vv V\,|\,\p_tS_{\vv V}(D)\Lam^s\vv V\bigr)_2\eqdefa I+II.
\end{aligned}\eeq

\medskip

{\bf Step 1. Estimate on $I$.} Thanks to \eqref{B-FD 3}, we have
\beq\label{est 27}\begin{aligned}
&I=-\bigl([\Lam^s,M(\vv V,D)]\vv V\,|\,(S_{\vv V}(D)+S_{\vv V}(D)^*)\Lam^s\vv V\bigr)_2\\
&\quad
-\bigl((S_{\vv V}(D)+S_{\vv V}(D)^*)\bigl(M(\vv V,D)\Lam^s\vv V\bigr)\,|\,\Lam^s\vv V\bigr)_2\eqdefa I_1+I_2.
\end{aligned}\eeq

\smallskip

{\it Step 1.1. Estimate on $I_1$.} By the expressions of $M(\vv V,D)$ and $S_{\vv V}(D)$ in \eqref{M 1} and \eqref{S for b=d>0} with $g(D)=1$, we first have
\beno\begin{aligned}
&\quad\bigl([\Lam^s,M(\vv V,D)]\vv V\,|\,S_{\vv V}(D)\Lam^s\vv V\bigr)_2\\
&=-\epsilon(1-\gamma)\bigl([\Lam^s,\vv v]\cdot\na\z\,|\,(1+c\mu\D)\Lam^s\z\bigr)_2-\epsilon(1-\gamma)\bigl([\Lam^s,\z]\na\cdot\vv v\,|\,(1+c\mu\D)\Lam^s\z\bigr)_2\\
&\qquad+\f{\epsilon^2}{\gamma}\bigl([\Lam^s,\vv v]\cdot\na\z+[\Lam^s,\z]\na\cdot\vv v\,|\,\vv v\cdot\Lam^s\vv v\bigr)_2+\f{\epsilon^2}{\gamma}\sum_{j=1,2}\bigl([\Lam^s,\vv v]\cdot\p_j\vv v\,|\,v^j\Lam^s\z\bigr)_2\\
&\qquad
-\f{\epsilon}{\gamma}\sum_{j=1,2}\bigl([\Lam^s,\vv v]\cdot\p_j\vv v\,|\,(A(D)-\epsilon\z)\Lam^sv^j\bigr)_2
\eqdefa I_{11}+I_{12}+I_{13}+I_{14}+I_{15}.
\end{aligned}\eeno

For $I_{11}$, integration by parts gives rise to
\beno\begin{aligned}
&|I_{11}|\lesssim\epsilon\|[\Lam^s,\vv v]\cdot\na\z\|_{L^2}\|\Lam^s\z\|_{L^2}+\epsilon\mu\|\na([\Lam^s,\vv v]\cdot\na\z)\|_{L^2}\|\na\Lam^s\z\|_{L^2},
\end{aligned}\eeno
which along with \eqref{commutator 1} implies
\beq\label{est 28}
|I_{11}|\lesssim\epsilon\|\vv v\|_{X^s_\mu}\|\z\|_{X^s_\mu}^2.
\eeq
Similar estimates hold for $I_{12}$, $I_{13}$, $I_{14}$ and $I_{15}$. Then using \eqref{equivalent 3}, we obtain
\beq\label{estimate for I1 b=d=0}
|I_1|\lesssim\epsilon\bigl(1+\epsilon\|\vv v\|_{X^s_\mu}\bigr)\|\vv v\|_{X^s_\mu}\bigl(\|\z\|_{X^s_\mu}^2+\|\vv v\|_{X^s_\mu}^2\bigr)
\lesssim\epsilon\bigl(1+\epsilon (E_s(\vv V))^{\f12}\bigr)(E_s(\vv V))^{\f32}.
\eeq

\smallskip

{\it Step 1.2. Estimate on $I_2$.} By the expressions of $M(\vv V,D)$ and $S_{\vv V}(D)$ in \eqref{M 1} and \eqref{S for b=d>0} with $g(D)=1$, we calculate $S_{\vv V}(D)M(\vv V,D)=(a_{ij})_{i,j=1,2}$ as follows:
\beno\begin{aligned}
&a_{11}=-\epsilon(1-\gamma)[(1+c\mu\D)(\vv v\cdot\na)+\vv v\cdot\na(1+c\mu\D)],\\
&a_{12}=(1-\gamma)(1+c\mu\D)\bigl((A(D)-\epsilon\z)\p_1\bigr)+\f{\epsilon^2}{\gamma}v^1\vv v\cdot\na,\\
&a_{13}=(1-\gamma)(1+c\mu\D)\bigl((A(D)-\epsilon\z)\p_2\bigr)+\f{\epsilon^2}{\gamma}v^2\vv v\cdot\na,\\
&a_{21}=(1-\gamma)(A(D)-\epsilon\z)(1+c\mu\D)\p_1+\f{\epsilon^2}{\gamma}v^1\vv v\cdot\na,\\
&a_{22}=-\f{\epsilon}{\gamma}\bigl[v^1(A(D)-\epsilon\z)\p_1+(A(D)-\epsilon\z)(v^1\p_1)\bigr],\\
&a_{23}=-\f{\epsilon}{\gamma}\bigl[v^1(A(D)-\epsilon\z)\p_2+(A(D)-\epsilon\z)(v^2\p_1)\bigr],\\
&a_{31}=(1-\gamma)(A(D)-\epsilon\z)(1+c\mu\D)\p_2+\f{\epsilon^2}{\gamma}v^2\vv v\cdot\na\\
&a_{32}=-\f{\epsilon}{\gamma}\bigl[v^2(A(D)-\epsilon\z)\p_1+(A(D)-\epsilon\z)(v^1\p_2)\bigr],\\
&a_{33}=-\f{\epsilon}{\gamma}\bigl[v^2(A(D)-\epsilon\z)\p_2+(A(D)-\epsilon\z)(v^2\p_2)\bigr].
\end{aligned}\eeno

Now, we calculate $\bigl(S_{\vv V}(D)\bigl(M(\vv V,D)\Lam^s\vv V\bigr)\,|\,\Lam^s\vv V\bigr)_2$.

{\it For $a_{11}$,} integration by parts gives rise to
\beno\begin{aligned}
&\bigl(a_{11}\Lam^s\z\,|\,\Lam^s\z\bigr)_2=\epsilon(1-\gamma)\bigl(\na\cdot\vv v\Lam^s\z\,|\,\Lam^s\z\bigr)_2
-c\mu\epsilon(1-\gamma)\bigl(\na\cdot\vv v\na\Lam^s\z\,|\,\na\Lam^s\z\bigr)_2\\
&\quad
+c\mu\epsilon(1-\gamma)\sum_{j=1,2}\{\bigl(\p_j\vv v\cdot\na\Lam^s\z\,|\,\p_j\Lam^s\z\bigr)_2-\bigl(\na\cdot(\p_j\vv v\Lam^s\z)\,|\,\p_j\Lam^s\z\bigr)_2\},
\end{aligned}\eeno
which implies
\beq\label{est 29}
|\bigl(a_{11}\Lam^s\z\,|\,\Lam^s\z\bigr)_2|\lesssim\epsilon\|\vv v\|_{X^s_\mu}\|\z\|_{X^s_\mu}^2.
\eeq

{\it For $a_{22}$,} we first deal with one term involving $A(D)$ as follows
\beno\begin{aligned}
&\quad-\f{\epsilon}{\gamma^3}\f{\mu}{\mu_2}\bigl(v^1\sigma(D)^2\p_1\Lam^sv^1\,|\,\Lam^sv^1\bigr)_2\\
&
=\f{\epsilon}{\gamma^3}\f{\mu}{\mu_2}\{\bigl(\sigma(D)\Lam^sv^1\,|\,\p_1\bigl([\sigma(D),v^1]\Lam^sv^1\bigr)\bigr)_2
+\f12\bigl(\p_1v^1\sigma(D)\Lam^sv^1\,|\,\sigma(D)\Lam^sv^1\bigr)\bigr)_2\},
\end{aligned}\eeno
which along with \eqref{sigma 1} and \eqref{commutator of sigma} implies
\beno
\f{\epsilon}{\gamma^3}\f{\mu}{\mu_2}|\bigl(v^1\sigma(D)^2\p_1\Lam^sv^1\,|\,\Lam^sv^1\bigr)_2|\lesssim\epsilon\|\vv v\|_{X^s_\mu}^3.
\eeno
Similar estimate holds for the other terms in $\bigl(a_{22}\Lam^sv^1\,|\,\Lam^sv^1\bigr)_2$. Then we obtain
\beq\label{est 30}
|\bigl(a_{22}\Lam^sv^1\,|\,\Lam^sv^1\bigr)_2|\lesssim\epsilon(1+\epsilon\|\z\|_{X^s_\mu})\|\vv v\|_{X^s_\mu}^3.
\eeq
The same estimate holds for $\bigl(a_{33}\Lam^sv^2\,|\,\Lam^sv^2\bigr)_2$.

{\it For $a_{12}$ and $a_{21}$,}  it is easy to check that
\beno\begin{aligned}
&a_{12}^*+a_{21}=\epsilon(1-\gamma)\p_1\z(1+c\mu\D)-\f{\epsilon^2}{\gamma}\na\cdot(v^1\vv v),
\end{aligned}\eeno
which implies
\beq\label{est 31}\begin{aligned}
&|\bigl(a_{12}\Lam^sv^1\,|\,\Lam^s\z\bigr)_2+\bigl(a_{21}\Lam^s\z\,|\,\Lam^sv^1\bigr)_2|
\lesssim\epsilon\|\z\|_{X^{t_0+1}_\mu}\|\vv v\|_{X^s_\mu}\|\z\|_{X^s_\mu}+\epsilon^2\|\vv v\|_{H^{t_0+1}}^2\|\vv v\|_{H^s}\|\z\|_{H^s}\\
&\lesssim\epsilon(1+\epsilon\|\vv v\|_{X^s_\mu})\|\z\|_{X^s_\mu}\bigl(\|\z\|_{X^s_\mu}^2+\|\vv v\|_{X^s_\mu}^2\bigr).
\end{aligned}\eeq
The same estimate holds for $\bigl(a_{13}\Lam^sv^2\,|\,\Lam^s\z\bigr)_2+\bigl(a_{31}\Lam^s\z\,|\,\Lam^sv^2\bigr)_2$.

{\it For $a_{23}$ and $a_{32}$,}  it is easy to check that
\beno\begin{aligned}
&a_{23}^*+a_{32}=\f{\epsilon}{\gamma}\p_1v^2A(D)-\f{\epsilon^2}{\gamma}\p_1(v^2\z)\cdot
+\f{\epsilon}{\gamma}A(D)(\p_2v^1\cdot)-\f{\epsilon^2}{\gamma}\p_2(v^1\z)\cdot.
\end{aligned}\eeno
Thanks to the expression of $A(D)$ in \eqref{notation}, using \eqref{sigma 1} and \eqref{interpolation}, we get
\beq\label{est 32}\begin{aligned}
&|\bigl(a_{23}\Lam^sv^2\,|\,\Lam^sv^1\bigr)_2+\bigl(a_{32}\Lam^sv^1\,|\,\Lam^sv^2\bigr)_2|
\lesssim\epsilon(1+\epsilon\|\z\|_{X^s_\mu})\|\vv v\|_{X^s_\mu}^3.
\end{aligned}\eeq

\smallskip

Combining \eqref{est 29}, \eqref{est 30}, \eqref{est 31} and \eqref{est 32}, we obtain the estimate for $\bigl(S_{\vv V}(D)\bigl(M(\vv V,D)\Lam^s\vv V\bigr)\,|\,\Lam^s\vv V\bigr)_2$. The same estimate holds for $\bigl(S_{\vv V}(D)^*\bigl(M(\vv V,D)\Lam^s\vv V\bigr)\,|\,\Lam^s\vv V\bigr)_2$. Then using \eqref{equivalent 3}, we obtain
\beq\label{estimate for I2 b=d=0}
|I_2|\lesssim\epsilon\bigl(1+\epsilon (E_s(\vv V))^{\f12}\bigr)(E_s(\vv V))^{\f32},
\eeq
which along with \eqref{estimate for I1 b=d=0} implies
\beq\label{estimate for I b=d=0}
|I|\lesssim\epsilon\bigl(1+\epsilon (E_s(\vv V))^{\f12}\bigr)(E_s(\vv V))^{\f32}.
\eeq

\medskip

{\bf Step 2. Estimate on $II$.} Thanks to the expression of $S_{\vv V}(D)$ in \eqref{S for b=d>0}, we have
\beno
II=-\epsilon\bigl(\Lam^s\z\,|\,\p_t\vv v\cdot\Lam^s\vv v\bigr)_2-\epsilon\bigl(\Lam^s\vv v\,|\,\p_t\vv v\Lam^s\z\bigr)_2,
\eeno
which along with \eqref{equivalent 3} implies
\beq\label{estimate for II b=d=0}
|II|\lesssim\epsilon\bigl(\|\z\|_{H^s}^2+\|\vv v\|_{H^s}^2\bigr)\bigl(\|\p_t\z\|_{H^{s-2}}+\|\p_t\vv v\|_{H^{s-2}}\bigr)
\lesssim\epsilon E_s(\vv V)\|\p_t\vv V\|_{H^{s-2}},
\eeq
where we used the fact that $s\geq t_0+2$.

\medskip

{\bf Step 3. The {\it a priori} energy estimate.} Thanks to \eqref{B-FD 3}, we have
\beno
\|\p_t\vv V\|_{H^{s-2}}\lesssim\bigl(1+\epsilon\|\z\|_{H^s}+\epsilon\|\vv v\|_{H^s}\bigr)\|\vv V\|_{X^s_\mu},
\eeno
which along with \eqref{estimate for I b=d=0},  \eqref{estimate for II b=d=0}, \eqref{est 26}  and \eqref{equivalent 3} implies
\beq\label{priori 3}
\f{d}{dt} E_s(\vv V)\lesssim\epsilon\bigl(1+\epsilon(E_s(\vv V))^{\f12}\bigr)\bigl(E_s(\vv V)\bigr)^{\f32}\lesssim\epsilon\bigl(1+\epsilon^2 E_s(\vv V)\bigr)\bigl(E_s(\vv V)\bigr)^{\f32}.
\eeq
This is exactly \eqref{priori energy estimate}.

\medskip

\begin{remark}
The a priori estimate \eqref{priori energy estimate} for the remain cases in Definition \ref{def2} can be treated  in a similar way as the cases in this section.
\end{remark}

\setcounter{equation}{0}
\section{Global existence for the Hamiltionian case $b=d>0,\,a\leq0,\,c<0$}
In this section, we shall prove Theorem \ref{Global existence for B-FD in case b=d} that is the global existence of solutions of \eqref{eqB-FD} with
$b=d>0,\,a\leq0,\,c<0$. We only discuss the two-dimensional case. The one-dimensional case follows in a similar way and actually it is considered in \cite{AS}.

\subsection{Hamiltonian structure for the Boussinesq-Full dispersion system when $b=d$.}
Recalling \eqref{B-FD}, we search a function $\mathcal{H}=\mathcal{H}(\z,\vv v)$ satisfying
\beq\label{H1}\begin{aligned}
&\f{\d\mathcal{H}}{\d\z}=(1-\gamma)(1+c\mu\D)\z-\f{\epsilon}{2\gamma}|\vv v|^2,\\
&\f{\d\mathcal{H}}{\d{\vv v}}=\f{1}{\gamma}(1-\epsilon\z)\vv v+\f{a\mu}{\gamma}\D\vv v+\f{1}{\gamma^2}\sqrt{\f{\mu}{\mu_2}}\sigma(D)\vv v
+\f{1}{\gamma^3}\f{\mu}{\mu_2}\sigma(D)^2\vv v.
\end{aligned}\eeq
Then we have
\beq\label{Hamiltonian}\begin{aligned}
\mathcal{H}(\z,\vv v)\eqdefa&\f12\int_{\R^2}\Bigl((1-\gamma)|\z|^2+\f{1}{\gamma}(1-\epsilon\z)|\vv v|^2-(1-\gamma)c\mu|\na\z|^2
-\f{a\mu}{\gamma}|\na\vv v|^2\\
&\qquad+\f{1}{\gamma^2}\sqrt{\f{\mu}{\mu_2}}|\sigma(D)^{\f12}\vv v|^2
+\f{1}{\gamma^3}\f{\mu}{\mu_2}|\sigma(D)\vv v|^2\Bigr)dx.
\end{aligned}\eeq

\begin{remark}
By the expression of $\mathcal{H}(\z,\vv v)$, and assuming that
\beq\label{H0}
1-\epsilon\z\geq H>0,
\eeq
we have for $a\leq 0,\,c<0$
\beno
\mathcal{H}(\z,\vv v)\sim\|\z\|_{X^0_\mu}^2+\|\vv v\|_{X^0_\mu}^2.
\eeno
However, condition \eqref{H0} could not be conserved in $X^0_\mu$ since $H^1(\R^2)$ is not embedding in $L^\infty(\R^2)$ contrary the one-dimensional case. Thus, the Hamiltonian is not obviously positive.
\end{remark}

Thanks to \eqref{B-FD} and \eqref{H1}, we have
\beq\label{H2}\begin{aligned}
&(1-b\mu\D)\p_t\z=-\na\cdot\f{\d\mathcal{H}}{\d{\vv v}},\\
&(1-d\mu\Delta)\p_t\vv v=-\na\f{\d\mathcal{H}}{\d{\z}}.
\end{aligned}\eeq
Due to \eqref{H2}, when $b=d$, \eqref{B-FD} is a Hamiltonian system that is given by
\beq\label{H B-FD}
\p_t
\left(
\begin{array}{c}
\z\\
\vv v
\end{array}\right)+J\na_{\z,\vv v}\mathcal{H}(\z,\vv v)=0.
\eeq
where
\beno
J=(1-b\mu\D)^{-1}\left(\begin{array}{cc}
0&\na\cdot\\
\na&\vv 0
\end{array}\right).
\eeno

Since $\mathcal{H}(\z,\vv v)$ is a Hamiltonian of \eqref{B-FD}, we have the following conservation law for \eqref{B-FD}.
\begin{lemma}\label{conservation}
When $b=d$, the smooth solution $(\z,\vv v)$ to \eqref{B-FD} satisfies
\beq\label{conservation law}
\f{d}{dt} \mathcal{H}(\z,\vv v)=0,
\eeq
where $\mathcal{H}(\z,\vv v)$ is a Hamiltonian defined by \eqref{Hamiltonian}.
\end{lemma}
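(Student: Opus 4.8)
The plan is to read the conservation law straight off the Hamiltonian form \eqref{H B-FD}: with $U=(\z,\vv v)^T$ one has $\f{d}{dt}\mathcal{H}=-(\na_{\z,\vv v}\mathcal{H}\,|\,J\na_{\z,\vv v}\mathcal{H})_2$, and this vanishes as soon as the structure operator $J$ is skew-adjoint. So the argument splits into a chain-rule step, a substitution of the equations of motion, and the verification of skew-adjointness, none of which requires any smallness or analytic input beyond the algebra of Fourier multipliers.

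First I would differentiate $\mathcal{H}$ in time by the chain rule, using the functional-derivative identities \eqref{H1}, to write
\[
\f{d}{dt}\mathcal{H}(\z,\vv v)=\Bigl(\f{\d\mathcal{H}}{\d\z}\,\Big|\,\p_t\z\Bigr)_2+\Bigl(\f{\d\mathcal{H}}{\d{\vv v}}\,\Big|\,\p_t\vv v\Bigr)_2.
\]
Here one must first confirm that the two expressions in \eqref{H1} are genuinely the $L^2$-gradients of the functional $\mathcal{H}$ in \eqref{Hamiltonian}: this is a direct variation, integrating by parts on the $|\na\z|^2$ and $|\na\vv v|^2$ terms and using that $\sigma(D)^{\f12}$ and $\sigma(D)$ are self-adjoint Fourier multipliers for the dispersive terms, together with the cubic term $-\f{\epsilon}{2\gamma}\z|\vv v|^2$, whose $\z$- and $\vv v$-variations are $-\f{\epsilon}{2\gamma}|\vv v|^2$ and $-\f{\epsilon}{\gamma}\z\vv v$. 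For the smooth, decaying solutions under consideration all boundary terms vanish.

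Next I would substitute the equations of motion \eqref{H2}. Setting $P\eqdefa(1-b\mu\D)^{-1}$ and using $b=d$, these read $\p_t\z=-P\na\cdot\f{\d\mathcal{H}}{\d{\vv v}}$ and $\p_t\vv v=-P\na\f{\d\mathcal{H}}{\d\z}$, so that, abbreviating $F\eqdefa\f{\d\mathcal{H}}{\d\z}$ and $\vv G\eqdefa\f{\d\mathcal{H}}{\d{\vv v}}$,
\[
\f{d}{dt}\mathcal{H}=-(F\,|\,P\na\cdot\vv G)_2-(\vv G\,|\,P\na F)_2.
\]
The decisive point is that $P$ is a Fourier multiplier with the real, even symbol $(1+b\mu|\xi|^2)^{-1}$, hence self-adjoint and commuting with $\na$; combined with $\na^{*}=-\na\cdot$ (one more integration by parts, no boundary contribution) this gives
\[
(F\,|\,P\na\cdot\vv G)_2=(PF\,|\,\na\cdot\vv G)_2=-(\na PF\,|\,\vv G)_2=-(\vv G\,|\,P\na F)_2,
\]
so the two contributions are exactly opposite and $\f{d}{dt}\mathcal{H}=0$. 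This chain of identities is precisely the statement that the structure operator $J$ of \eqref{H B-FD} is skew-adjoint, $J^{*}=-J$, since $J$ is the composition of the self-adjoint multiplier $P$ with the skew-adjoint off-diagonal first-order block $(\z,\vv v)\mapsto(\na\cdot\vv v,\,\na\z)$.

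I expect no genuine analytic difficulty: the only real work is bookkeeping, namely getting every constant and the $\epsilon$-dependent cubic contribution in \eqref{H1} right and checking that each integration by parts is legitimate in the regularity class at hand. It is worth emphasizing that the argument is purely structural and uses neither the smallness of $\epsilon,\mu$ nor the sign conditions $a\le0$, $c<0$; those enter only afterwards, in the Remark following \eqref{Hamiltonian}, when $\mathcal{H}$ is used to control the $X^0_\mu$ norm.
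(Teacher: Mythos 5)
Your proposal is correct and follows essentially the same route as the paper: chain rule via the functional derivatives \eqref{H1}, substitution of the equations of motion \eqref{H2}, and cancellation of the two resulting terms by integration by parts using $b=d$ (equivalently, skew-adjointness of $J$). You simply spell out the self-adjointness of $(1-b\mu\D)^{-1}$ and the verification of \eqref{H1} in more detail than the paper does.
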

\begin{proof}
Thanks to \eqref{Hamiltonian} and \eqref{H1}, we have
\beno\begin{aligned}
&\f{d}{dt}\mathcal{H}(\z,\vv v)=(\f{\d\mathcal{H}}{\d\z}\,|\,\p_t\z)_2+(\f{\d\mathcal{H}}{\d{\vv v}}\,|\,\p_t\vv v)_2
\end{aligned}\eeno
which along with \eqref{H2} implies
\beno\begin{aligned}
&\f{d}{dt}\mathcal{H}(\z,\vv v)=-\bigl(\f{\d\mathcal{H}}{\d\z}\,|\,(1-b\mu\D)^{-1}\na\cdot\f{\d\mathcal{H}}{\d{\vv v}}\bigr)_2-\bigl(\f{\d \mathcal{H}}{\d{\vv v}}\,|\,(1-d\mu\D)^{-1}\na\f{\d\mathcal{H}}{\d{\z}}\bigr)_2.
\end{aligned}\eeno
Since $b=d$, integration by parts gives rise to \eqref{conservation law}. The lemma is proved.
\end{proof}

\subsection{Local existence of the solutions to \eqref{B-FD} with $b=d>0,\,a\leq0,\,c<0$.}
In this subsection, we state the local existence and blow-up criteria for  \eqref{B-FD}-\eqref{initial data}.
\begin{proposition}\label{a priori for H B-FD}
Let  $b=d>0,\,a\leq0,\,c<0$, $\mu\sim\epsilon$. Assume that $(\z_0,\vv v_0)\in X^0_\mu(\R^2)\times X^0_\mu(\R^2)$. Then \eqref{B-FD}-\eqref{initial data} has a unique solution $(\z,\vv v)$ on $[0,T]$ for some $T>0$ so that  $(\z,\vv v)\in C(0,T; X^0_\mu(\R^2)\times X^0_\mu(\R^2))$  and
\beq\label{priori for H B-FD}
\max_{[0,T]}\bigl(\|\z\|_{X^0_\mu}+\|\vv v\|_{X^0_\mu}\bigr)\leq 2C_1\bigl(\|\z_0\|_{X^0_\mu}+\|\vv v_0\|_{X^0_\mu}\bigr),
\eeq
where $C_1>1$ is a constant.

Moreover, if $T^*$ is the lifespan to this solution and $T^*<\infty$, then
\beq\label{blow-up criteria}
\liminf_{t\rightarrow T^*}\bigl(\|\z(t)\|_{X^0_\mu}+\|\vv v(t)\|_{X^0_\mu}\bigr)=\infty.
\eeq

\end{proposition}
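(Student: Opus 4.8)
The plan is to exploit the Hamiltonian reformulation \eqref{H B-FD} to split the system into a skew-adjoint linear part plus a \emph{bounded} quadratic nonlinearity, and then to solve by a Duhamel fixed point argument in $C([0,T];X^0_\mu(\R^2)\times X^0_\mu(\R^2))$. Writing $\vv V=(\z,\vv v)$, I would set $L\eqdefa\mathrm{diag}\bigl((1-\gamma)(1+c\mu\D),\,\f1\gamma A(D)\bigr)$, the Hessian of the quadratic part $\mathcal{H}_2$ of $\mathcal{H}$ obtained by dropping the cubic term $-\f{\epsilon}{2\gamma}\int\z|\vv v|^2$ in \eqref{Hamiltonian}, and $\mathcal{L}\eqdefa JL$, so that \eqref{H B-FD} reads $\p_t\vv V+\mathcal{L}\vv V+\mathcal{Q}(\vv V)=\vv 0$ with
\[
\mathcal{Q}(\vv V)=\Bigl(\f{\epsilon}{\gamma}(1-b\mu\D)^{-1}\na\cdot(\z\vv v),\ \f{\epsilon}{2\gamma}(1-b\mu\D)^{-1}\na(|\vv v|^2)\Bigr).
\]
Since $a\le0$, $c<0$ and $\f{\mu}{\mu_2}\sigma(D)^2\gtrsim\mu|D|^2$ by \eqref{sigma 1}, the quadratic form $\langle\vv V,\vv V\rangle_L\eqdefa(L\vv V\,|\,\vv V)_2=2\mathcal{H}_2(\vv V)$ is positive and defines an inner product equivalent to the $X^0_\mu\times X^0_\mu$ norm; let $C_1>1$ denote the constant in this equivalence. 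Because $J$ is skew-adjoint on $L^2$ and $L$ is self-adjoint, one has $(L\mathcal{L}\vv V\,|\,\vv V)_2=(JL\vv V\,|\,L\vv V)_2=0$, so $\mathcal{L}$ is skew-adjoint for $\langle\cdot,\cdot\rangle_L$. As $\mathcal{L}$ is a Fourier multiplier, $e^{-t\mathcal{L}}$ is given by the exponential of its symbol matrix, and this skew-symmetry yields $\|e^{-t\mathcal{L}}\vv W\|_{X^0_\mu}\le C_1\|\vv W\|_{X^0_\mu}$ for all $t$.

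The crucial step, and the main obstacle, is to prove that $\mathcal{Q}$ is a bounded quadratic map on $X^0_\mu(\R^2)$, namely
\[
\|\mathcal{Q}(\vv V)-\mathcal{Q}(\vv W)\|_{X^0_\mu}\lesssim\bigl(\|\vv V\|_{X^0_\mu}+\|\vv W\|_{X^0_\mu}\bigr)\|\vv V-\vv W\|_{X^0_\mu}.
\]
This is where both the two-dimensionality and the scaling $\mu\sim\epsilon$ enter. The multipliers $(1-b\mu\D)^{-1}\na$ and $\mu^{\f12}(1-b\mu\D)^{-1}\na\otimes\na$ have symbols $\f{i\xi}{1+b\mu|\xi|^2}$ and $\f{\mu^{1/2}\xi\otimes\xi}{1+b\mu|\xi|^2}$, both of $L^2\to L^2$ norm $\lesssim\mu^{-1/2}$; and the Gagliardo--Nirenberg inequality in $\R^2$ gives $\|fg\|_{L^2}\le\|f\|_{L^4}\|g\|_{L^4}\lesssim\|f\|_{L^2}^{1/2}\|\na f\|_{L^2}^{1/2}\|g\|_{L^2}^{1/2}\|\na g\|_{L^2}^{1/2}\lesssim\mu^{-1/2}\|f\|_{X^0_\mu}\|g\|_{X^0_\mu}$. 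Combining these with the prefactor $\epsilon$ and examining both the $L^2$ and the $\mu^{1/2}\na$ parts of the $X^0_\mu$ norm yields $\|\mathcal{Q}(\vv V)\|_{X^0_\mu}\lesssim\f{\epsilon}{\mu}\|\vv V\|_{X^0_\mu}^2$; since $\mu\sim\epsilon$ the dangerous factor $\epsilon\mu^{-1}$ is $O(1)$. This cancellation is precisely what makes well-posedness possible at the low regularity $X^0_\mu\simeq H^1$, where $H^1(\R^2)\not\hookrightarrow L^\infty$; the difference estimate then follows from the bilinear structure.

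With these two ingredients the construction is routine. I would run the contraction mapping scheme for the Duhamel map
\[
\Phi(\vv V)(t)=e^{-t\mathcal{L}}\vv V_0-\int_0^t e^{-(t-\tau)\mathcal{L}}\,\mathcal{Q}\bigl(\vv V(\tau)\bigr)\,d\tau
\]
on the ball $\bigl\{\vv V\in C([0,T];X^0_\mu\times X^0_\mu):\sup_{[0,T]}\|\vv V\|_{X^0_\mu}\le 2C_1(\|\z_0\|_{X^0_\mu}+\|\vv v_0\|_{X^0_\mu})\bigr\}$. The uniform bound on $e^{-t\mathcal{L}}$ together with the quadratic bound on $\mathcal{Q}$ make $\Phi$ a self-map and a contraction once $T$ is small enough depending only on $\|\z_0\|_{X^0_\mu}+\|\vv v_0\|_{X^0_\mu}$. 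Its unique fixed point is the desired solution in $C(0,T;X^0_\mu\times X^0_\mu)$, and the self-map property is exactly the a priori bound \eqref{priori for H B-FD}, while uniqueness and the continuity in time come directly from the contraction.

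Finally, because the existence time $T$ may be chosen to depend only, and monotonically, on the size $\|\z_0\|_{X^0_\mu}+\|\vv v_0\|_{X^0_\mu}$ of the data, the blow-up criterion \eqref{blow-up criteria} follows from the standard continuation argument: if $T^*<\infty$ but $\liminf_{t\to T^*}\bigl(\|\z(t)\|_{X^0_\mu}+\|\vv v(t)\|_{X^0_\mu}\bigr)=:M<\infty$, then choosing $t_n\to T^*$ with $\|\z(t_n)\|_{X^0_\mu}+\|\vv v(t_n)\|_{X^0_\mu}\le 2M$ and restarting the local construction from $t_n$ produces a solution on $[t_n,t_n+\tau]$ with $\tau=\tau(M)>0$ fixed, which for $n$ large extends the solution past $T^*$ and contradicts the maximality of $T^*$.
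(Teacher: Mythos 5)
Your proposal is correct, and at its core it coincides with the paper's proof; the only genuine divergence is in how the linear propagator is controlled. The decisive step is identical in both arguments: the quadratic nonlinearity, with its prefactor $\epsilon$ and the $\mu^{-1/2}$-smoothing of $(1-b\mu\D)^{-1}\na$, is shown to be a bounded quadratic map on $X^0_\mu(\R^2)$ via Ladyzhenskaya's inequality, the dangerous factor $\epsilon/\mu$ being $O(1)$ precisely because $\mu\sim\epsilon$ --- this is exactly the paper's estimate \eqref{H11} for $f_\pm$. The Duhamel contraction on the ball of radius $2C_1(\|\z_0\|_{X^0_\mu}+\|\vv v_0\|_{X^0_\mu})$ and the continuation argument for \eqref{blow-up criteria} are likewise the paper's Steps 4 and 5. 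Where you differ is the linear part: the paper diagonalizes the system explicitly, introducing $W=|D|^{-1}\curl\vv v$ and $Z_\pm=\z\pm\sqrt{\omega_1(D)/\omega_2(D)}\,(i|D|)^{-1}\na\cdot\vv v$, so that the free evolution becomes the unitary scalar groups $e^{\mp it|D|\sqrt{\omega_1(D)\omega_2(D)}}$ together with $\p_tW=0$, and it must then check that the zero-order multipliers $\sqrt{\omega_1/\omega_2}$ and $\sqrt{\omega_2/\omega_1}$ are bounded both ways so that the new unknowns carry an equivalent $X^0_\mu$ norm (see \eqref{H5} through \eqref{equivalent energy for H}). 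You instead keep the matrix form and observe that $\mathcal{L}=JL$ is skew-adjoint for the inner product defined by the quadratic part of the Hamiltonian, which is positive and $X^0_\mu$-equivalent since $a\le0$, $c<0$; this conserved quadratic form gives the uniform semigroup bound without diagonalizing. The two devices are interchangeable here: yours is slightly more economical and stays closer to the Hamiltonian structure, while the paper's diagonalization additionally isolates the conserved vorticity component $W$ and exhibits the dispersion relation $\lambda_\pm(\xi)$ explicitly. (A cosmetic remark: your $\mathcal{Q}$ should carry the opposite signs when the nonlinear terms are moved to the left-hand side, but this is immaterial for every estimate.)
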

\begin{proof}
 We divide the proof into several steps.

{\bf Step 1. Diagonalization of \eqref{B-FD}.}
Let $\lambda_{\pm}(\xi)$ be the eigen values of  system \eqref{B-FD}. Analysis on the linear part of \eqref{B-FD} with $b=d>0$ yields that
\beq\label{eigen value}
\lambda_{\pm}(\xi)=\pm i\sqrt{\f{1-\gamma}{\gamma}}\f{A(\xi)^{\f12}(1-c\mu|\xi|^2)^{\f12}}{1+b\mu|\xi|^2}|\xi|,
\eeq
where $A(\xi)$ is a symbol of the Fourier multiplier $A(D)$ that is defined in \eqref{notation}.

Now, we  diagonalize the system \eqref{B-FD}.
Denoting by
\beno
\omega_1=\omega_1(\xi)=\f{1}{\gamma}\f{A(\xi)}{1+b\mu|\xi|^2},\quad \omega_2=\omega_2(\xi)=(1-\gamma)\f{1-c\mu|\xi|^2}{1+b\mu|\xi|^2}
\eeno
we have
\beq\label{H3}
\lambda_{\pm}(\xi)=\pm i\sqrt{\omega_1\omega_2}|\xi|.
\eeq

Letting
\beq\label{new unknowns}
W=|D|^{-1}\curl\vv v,\quad
Z_\pm=\z\pm\sqrt{\f{\omega_1(D)}{\omega_2(D)}}\f{1}{i|D|}\na\cdot\vv v,
\eeq
\eqref{B-FD} is equivalent to
\beq\label{H B-FD 1}\begin{aligned}
&\p_t W=0,\quad \p_tZ_\pm\pm i|D|\sqrt{\omega_1(D)\omega_2(D)}Z_\pm=f_\pm,
\end{aligned}\eeq
where
\beq\label{nonlinear for H}
f_\pm=\f{1}{\gamma}\f{\epsilon}{1-b\mu\D}\na\cdot(\z\vv v)\pm
\f{1}{2\gamma}\sqrt{\f{\omega_1(D)}{\omega_2(D)}}\f{i\epsilon|D|}{1-b\mu\D}(|\vv v|^2).
\eeq

\medskip

{\bf Step 2. Solutions to \eqref{H B-FD 1}.} Defining
\beno
W_0=|D|^{-1}\curl\vv v_0,\quad
Z_{\pm,0}=\z_0\pm\sqrt{\f{\omega_1(D)}{\omega_2(D)}}\f{1}{i|D|}\na\cdot\vv v_0,
\eeno
by virtue of Duhamel principle, the solutions to \eqref{H B-FD 1} are written as
\beq\label{H4}\begin{aligned}
&W(t,x)=W_0(x),\\
& Z_\pm(t,x)=e^{\mp it|D|\sqrt{\omega_1(D)\omega_2(D)}}Z_{\pm,0}(x)
+\int_0^te^{\mp i(t-s)|D|\sqrt{\omega_1(D)\omega_2(D)}}f_\pm(s,x)ds.
\end{aligned}\eeq

Thanks to \eqref{new unknowns}, we have
\beq\label{unknowns}
\z=\f12(Z_++Z_-),
\quad\vv v=\f{\na}{2i|D|}\sqrt{\f{\omega_2(D)}{\omega_1(D)}}(Z_+-Z_-)+\f{\na^\perp}{|D|}W,
\eeq
where $\na^\perp=(-\p_2,\p_1)^T$.

Since
\beno
A(\xi)=1-a\mu|\xi|^2+\f{1}{\gamma}\sqrt{\f{\mu}{\mu_2}}\sigma(\xi)
+\f{1}{\gamma^2}\f{\mu}{\mu_2}\sigma(\xi)^2,\quad \sigma(\xi)=\sqrt{\mu_2}|\xi|\coth(\sqrt{\mu_2}|\xi|),
\eeno
and
\beno
\lim_{|\xi|\rightarrow0}\sigma(\xi)=1,\quad\lim_{|\xi|\rightarrow\infty}\f{\sigma(\xi)}{\sqrt{\mu_2}|\xi|}=1,
\eeno
it is easy to check that
\beq\label{H5}\begin{aligned}
&\|\omega_1(\xi)\|_{L^\infty_\xi}+\|\omega_2(\xi)\|_{L^\infty_\xi}\lesssim1,
\quad\|\f{\omega_2(\xi)}{\omega_1(\xi)}\|_{L^\infty_\xi}+\|\f{\omega_1(\xi)}{\omega_2(\xi)}\|_{L^\infty_\xi}\lesssim1.
\end{aligned}\eeq
Actually, following similar derivation as $g(D)$, one could check that  $\omega_1(D)$, $\omega_2(D)$, $\sqrt{\f{\omega_1(D)}{\omega_2(D)}}$ and $\sqrt{\f{\omega_2(D)}{\omega_1(D)}}$ are zero-order pseudo-differential operators which satisfy
\beq\label{H6}\begin{aligned}
&\|\omega_1(D)f\|_{H^s}\sim\|f\|_{H^s},\quad\|\omega_2(D)f\|_{H^s}\sim\|f\|_{H^s}\\
&\|\sqrt{\f{\omega_2(D)}{\omega_1(D)}}f\|_{H^s}\sim\|f\|_{H^s},\quad\|\sqrt{\f{\omega_1(D)}{\omega_2(D)}}f\|_{H^s}\sim\|f\|_{H^s},
\end{aligned}\eeq
for any $s\in\R$ and $f\in H^s(\R^2)$.

By virtue of Plancherel theorem and \eqref{H5}, or due to \eqref{H6}, we deduce from \eqref{new unknowns} and \eqref{unknowns} that
\beq\label{equivalent energy for H}
\|W\|_{X^0_\mu}+\|Z_+\|_{X^0_\mu}+\|Z_-\|_{X^0_\mu}\sim\|\z\|_{X^0_\mu}+\|\vv v\|_{X^0_\mu}.
\eeq

\medskip

{\bf Step 3. The {\it a priori} energy estimate.} Thanks to \eqref{H4}, we first have
\beq\label{H7}
\|W\|_{X^0_\mu}=\|W_0\|_{X^0_\mu},
\eeq
\beq\label{H8}
\|Z_\pm(t)\|_{X^0_\mu}\lesssim\|Z_{\pm,0}\|_{X^0_\mu}+\int_0^t\|f_\pm(s)\|_{X^0_\mu}d\tau.
\eeq

Now, we derive the bound of $\|f_\pm\|_{X^0_\mu}\sim\|f_\pm\|_{L^2}+\sqrt\mu\|\na f_\pm\|_{L^2}$. Thanks to \eqref{nonlinear for H} and \eqref{H6}, we have
\beno\begin{aligned}
&\|f_\pm\|_{L^2}\lesssim\|\f{\epsilon}{1-b\mu\D}\na\cdot(\z\vv v)\|_{L^2}+
\|\f{\epsilon|D|}{1-b\mu\D}(|\vv v|^2)\|_{L^2}\\
&\lesssim\f{\epsilon}{\sqrt\mu}\bigl(\|\z\vv v\|_{L^2}+\||\vv v|^2\|_{L^2}\bigr)
\lesssim\f{\epsilon}{\sqrt\mu}\bigl(\|\vv v\|_{L^4}\|\z\|_{L^4}+\|\vv v\|_{L^4}^2\bigr).
\end{aligned}\eeno
Since Ladyzhenskaya's inequality yields
\beno
\|\vv v\|_{L^4}\lesssim\|\vv v\|_{L^2}^{\f12}\|\na\vv v\|_{L^2}^{\f12},\quad \|\z\|_{L^4}\lesssim\|\z\|_{L^2}^{\f12}\|\na\z\|_{L^2}^{\f12},
\eeno
we have
\beq\label{H9}
\|f_\pm\|_{L^2}\lesssim\bigl(\|\z\|_{L^2}+\|\vv v\|_{L^2}\bigr)\cdot\f{\epsilon}{\sqrt\mu}\bigl(\|\na\z\|_{L^2}+\|\na\vv v\|_{L^2}\bigr)
\lesssim \|\z\|_{X^0_\mu}^2+\|\vv v\|_{X^0_\mu}^2,
\eeq
provided that $\epsilon\sim\mu$.

Whereas thanks to \eqref{nonlinear for H} and \eqref{H6}, we have
\beno\begin{aligned}
&\|\na f_\pm\|_{L^2}\lesssim\|\f{\epsilon\na^2}{1-b\mu\D}(\z\vv v)\|_{L^2}+
\|\f{\epsilon|D|^2}{1-b\mu\D}(|\vv v|^2)\|_{L^2}\lesssim\f{\epsilon}{\mu}\bigl(\|\z\vv v\|_{L^2}+\||\vv v|^2\|_{L^2}\bigr).
\end{aligned}\eeno
After similar derivation as \eqref{H9}, if $\epsilon\sim\mu$, we have
\beq\label{H10}
\sqrt\mu\|\na f_\pm\|_{L^2}\lesssim \|\z\|_{X^0_\mu}^2+\|\vv v\|_{X^0_\mu}^2.
\eeq

Due to \eqref{H9} and \eqref{H10}, we have
\beq\label{H11}
\|f_\pm\|_{X^0_\mu}\lesssim \|\z\|_{X^0_\mu}^2+\|\vv v\|_{X^0_\mu}^2,
\eeq
which along with \eqref{H8} implies
\beq\label{H12}
\|Z_\pm(t)\|_{X^0_\mu}\lesssim\|Z_{\pm,0}\|_{X^0_\mu}+t\sup_{(0,t)}\bigl(\|\z\|_{X^0_\mu}+\|\vv v\|_{X^0_\mu}\bigr)^2.
\eeq

Thanks to \eqref{H7}, \eqref{H8}, \eqref{H12} and \eqref{equivalent energy for H}, there exist two constants $C_1>1$ and $C_2>1$ such that
\beq\label{priori for H}
\sup_{(0,t)}\bigl(\|\z\|_{X^0_\mu}+\|\vv v\|_{X^0_\mu}\bigr)\leq C_1\bigl(\|\z_0\|_{X^0_\mu}+\|\vv v_0\|_{X^0_\mu}\bigr)
+C_2t\sup_{(0,t)}\bigl(\|\z\|_{X^0_\mu}+\|\vv v\|_{X^0_\mu}\bigr)^2.
\eeq

\medskip

{\bf Step 4. Local existence and uniqueness.}
   By virtue of the {\it a priori } energy estimate \eqref{priori for H} and the standard contraction theorem, \eqref{B-FD}-\eqref{initial data} admits a unique solution $(\z,\vv v)$ on $[0,T]$ with $T\leq\f{1}{4C_1C_2(\|\z_0\|_{X^0_\mu}+\|\vv v_0\|_{X^0_\mu})}$ and
\beq\label{H29}
\max_{[0,T]}\bigl(\|\z\|_{X^0_\mu}+\|\vv v\|_{X^0_\mu}\bigr)\leq 2C_1\bigl(\|\z_0\|_{X^0_\mu}+\|\vv v_0\|_{X^0_\mu}\bigr).
\eeq
We omit the details here. One could refer to \cite{Hu}.

 \medskip

{\bf Step 5. Proof of the blow-up criteria \eqref{blow-up criteria}.} We prove \eqref{blow-up criteria} by contradiction. Assume that  when $T^*<\infty$, there holds
\beq\label{H27}
\liminf_{t\rightarrow T^*}\bigl(\|\z(t)\|_{X^0_\mu}+\|\vv v(t)\|_{X^0_\mu}\bigr)<\infty.
\eeq
Then there exists a constant $M>0$ such that for any $t\in[0,T^*)$
\beq\label{H28}
\|\z(t)\|_{X^0_\mu}+\|\vv v(t)\|_{X^0_\mu}\leq M.
\eeq

Taking $T_0=T^*-\f{1}{8C_1C_2M}$, by the definition of $T^*$, \eqref{B-FD} with initial data $(\z_0,\vv v_0)$ admits a unique solution
\beno
(\z,\vv v)\in C([0,T_0]; X^0_\mu(\R^2)\times X^0_\mu(\R^2)),
\eeno
and \eqref{H28} holds for $t\in[0,T_0]$. Now let $(\z(T_0),\vv v(T_0))$ be the new initial data. The local existence theorem proved in Steps 1-4 shows that \eqref{B-FD} with initial data $(\z(T_0),\vv v(T_0))$ admits a unique solution
\beno
(\z,\vv v)\in C([T_0,\bar{T}]; X^0_\mu(\R^2)\times X^0_\mu(\R^2)),
\eeno
where $\bar{T}=T_0+T_1$
\beno
T_1=\f{1}{4C_1C_2M}\leq \f{1}{4C_1C_2(\|\z(T_0)\|_{X^0_\mu}+\|\vv v(T_0)\|_{X^0_\mu})}.
\eeno
Then \eqref{B-FD} with initial data $(\z_0,\vv v_0)$ admits a unique solution on time interval $[0,\bar{T}]$ with
\beno
\bar{T}=T^*+\f{1}{8C_1C_2M}>T^*.
\eeno
This is a contradiction to the definition of $T^*$. Now, we complete the proof of the proposition.
\end{proof}

\subsection{Proof of Theorem \ref{Global existence for B-FD in case b=d}}

 We shall use  Proposition \ref{a priori for H B-FD} and the conservation law \eqref{conservation law} to establish the global theory for \eqref{B-FD} with $b=d>0$ and $a\leq0$, $c<0$.

Firstly, Proposition \ref{a priori for H B-FD} shows that \eqref{B-FD} with initial data $(\z_0,\vv v_0)$ admits a unique solutions
\beno
(\z,\vv v)\in C([0,T]; X^0_\mu(\R^2)\times X^0_\mu(\R^2)), \quad\hbox{for some } T>0.
\eeno

 Due to \eqref{Hamiltonian} and \eqref{sigma 1}, there exists a constant $c_1>0$ such that for any $t\in[0,T]$
\beq\label{H16}
\mathcal{H}(\z(t),\vv v(t))\geq\f{1-\gamma}{2}\|\z\|_{L^2}^2+\f{(1-\gamma)|c|}{2}\mu\|\na\z\|_{L^2}^2+\f{1}{2\gamma}\|\vv v\|_{L^2}^2
+c_1\mu\|\na\vv v\|_{L^2}^2-\f{1}{2\gamma}\epsilon\int_{\R^2}|\z||\vv v|^2dx.
\eeq

By virtue of Ladyzhenskaya's inequality,  there exists a constant $c_2>0$ such that
\beq\label{H15}\begin{aligned}
&\epsilon\int_{\R^2}|\z||\vv v|^2dx\leq\epsilon\|\z\|_{L^2}\|\vv v\|_{L^4}^2\leq c_2\epsilon\|\z\|_{L^2}\|\vv v\|_{L^2}\|\na\vv v\|_{L^2}
\leq\f12\|\vv v\|_{L^2}^2+\f12c_2^2\epsilon^2\|\z\|_{L^2}^2\|\na\vv v\|_{L^2}^2.
\end{aligned}\eeq

Noticing that $\mu\sim\epsilon$, we deduce from \eqref{H16} and \eqref{H15} that
\beq\label{H17}
\mathcal{H}(\z(t),\vv v(t))\geq c_3\bigl(\|\z\|_{L^2}^2+\mu\|\na\z\|_{L^2}^2+\|\vv v\|_{L^2}^2
+2\mu(1-\epsilon\|\z\|_{L^2}^2)\|\na\vv v\|_{L^2}^2\bigr),
\eeq
for some constant $c_3>0$.

Taking $\epsilon_0'=\f{1}{4\|\z_0\|_{L^2}^2}$, we have $\epsilon_0'\|\z_0\|_{L^2}^2=\f14<\f12$. Since solution $(\z,\vv v)$ is continuous in time, there exists $0<T_0<T$ such that
\beq\label{H31}
\epsilon_0'\|\z(t)\|_{L^2}^2<\f12\quad\hbox{for any } t\in[0,T_0],
\eeq
which along with \eqref{Hamiltonian} and \eqref{H17} implies that for any $\epsilon\leq\epsilon_0'$
and $t\in[0,T_0]$, the Hamiltonian $\mathcal{H}(\z(t),\vv v(t))$ is positive and
\beq\label{H17a}
\|\z(t)\|_{X^0_\mu}^2+\|\vv v(t)\|_{X^0_\mu}^2\leq\f{1}{c_3}\mathcal{H}(\z(t),\vv v(t))=\f{1}{c_3}\mathcal{H}(\z_0,\vv v_0).
\eeq

Taking $\epsilon_0=\min\{\f{c_3}{4\mathcal{H}(\z_0,\vv v_0)},\,\f{1}{4\|\z_0\|_{L^2}^2}\}\leq\epsilon_0'$, by virtue of \eqref{H31} and \eqref{H17a}, we  have for any $\epsilon\leq\epsilon_0$
and $t\in[0,T_0]$, $\mathcal{H}(\z(t),\vv v(t))$ is positive and
\beq\label{H32}
\epsilon_0\|\z(t)\|_{L^2}^2<\f12,\quad \|\z(t)\|_{X^0_\mu}^2+\|\vv v(t)\|_{X^0_\mu}^2\leq \f{1}{c_3}\mathcal{H}(\z_0,\vv v_0).
\eeq

Thus, \eqref{H32} and the blow-up criteria \eqref{blow-up criteria} in Proposition \ref{a priori for H B-FD} shows
that for any $\epsilon\in(0,\epsilon_0)$, the solution to \eqref{B-FD}-\eqref{initial data} can always be extended till $T^*=\infty$. Then \eqref{B-FD}-\eqref{initial data} admits a unique solution on time interval $[0,\infty)$ such that
\beq\label{H26}
\sup_{(0,\infty)}\bigl(\|\z(t)\|_{X^0_\mu}+\|\vv v(t)\|_{X^0_\mu}\bigr)\leq\f{1}{c_3}\mathcal{H}(\z_0,\vv v_0)\leq C\bigl(\|\z_0\|_{X^0_\mu}+\|\vv v_0\|_{X^0_\mu}\bigr).
\eeq
This is exactly \eqref{global estimate}. We complete the proof of  Theorem \ref{Global existence for B-FD in case b=d}.

\setcounter{equation}{0}
\section{Final comments}
So far we are not aware of  a global existence result  of {\it large} solutions to at least one of the Boussinesq-FD systems. Recall that for the Boussinesq $(abcd)$ systems such a result is only known in the one-dimensional $a=b=c=0, d=1$ case, see \cite{Sc, A} and the comments in the survey article \cite{Lannes3}.

Proving such a result for a Boussinesq -FD system is a challenging problem as is to prove the conjectured dichotomy for the life span $T_\epsilon$ of solutions : either $T_\epsilon =+\infty$, or $T_\epsilon=O(1/\epsilon).$
\section{Appendix}

{\bf 1. Proof of \eqref{equivalent 1}.} By the definition of $E_s(\vv V)$ in \eqref{E 1} and the expression of $S_{\vv V}(D)$ in \eqref{S for b neq d}, we  have
\beq\label{A 1}\begin{aligned}
&E_s(\vv V)=(1-\gamma)^2\gamma^2\|g(D)^{\f12}(1-b\mu\D)^{\f12}(1+c\mu\D)\Lambda^s\z\|_{L^2}^2\\
&\quad+\gamma(1-\gamma)\bigl((1-b\mu\D)\Lambda^s\vv v\,|\,(A(D)-\epsilon\z)(1+c\mu\D)\Lambda^s\vv v\bigr)_2\\
&\quad-\epsilon\gamma(1-\gamma)\bigl((1-b\mu\D)\Lambda^s\z\,|\,g(D)\bigl(\vv v\cdot(1+c\mu\D)\Lambda^s\vv v\bigr)\bigr)_2\\
&\quad-\epsilon\gamma(1-\gamma)\bigl((1-b\mu\D)\Lambda^s\vv v\,|\,g(D)\bigl(\vv v(1+c\mu\D)\Lambda^s\z\bigr)\bigr)_2\\
&\quad+\epsilon^2\sum_{i,j=1,2}\bigl((1-b\mu\D)\Lambda^sv^i\,|\,v^iv^j(g(D)-1)\Lambda^sv^j\bigr)_2\eqdefa A_1+A_2+A_3+A_4+A_5.
\end{aligned}\eeq

For $A_1$, using \eqref{g 1}, we have
\beq\label{A 2}
A_1\sim\|(1-b\mu\D)^{\f12}(1+c\mu\D)\Lambda^s\z\|_{L^2}^2\sim\|\z\|_{X^s_{\mu^3}}^2.
\eeq

For $A_2$, using the expression of $A(D)$ in \eqref{notation}, we first have
\beq\label{A 3}\begin{aligned}
&A_2=\gamma(1-\gamma)\bigl((1-b\mu\D)\Lambda^s\vv v\,|\,(1-\epsilon\z)(1+c\mu\D)\Lambda^s\vv v\bigr)_2
+\gamma(1-\gamma)|a|\mu\|(1-b\mu\D)^{\f12}(1+c\mu\D)^{\f12}\na\Lambda^s\vv v\|_{L^2}^2\\
&\quad+(1-\gamma)\sqrt{\f{\mu}{\mu_2}}\|\sigma(D)^{\f12}(1-b\mu\D)^{\f12}(1+c\mu\D)^{\f12}\Lambda^s\vv v\|_{L^2}^2
+\f{1-\gamma}{\gamma}\f{\mu}{\mu_2}\|\sigma(D)(1-b\mu\D)^{\f12}(1+c\mu\D)^{\f12}\Lambda^s\vv v\|_{L^2}^2\\
&\eqdefa A_{21}+A_{22}+A_{23}+A_{24}
\end{aligned}\eeq

Integrating by parts, we have
\beno\begin{aligned}
&A_{21}=\gamma(1-\gamma)\{\bigl(\Lambda^s\vv v\,|\,(1-\epsilon\z)\Lambda^s\vv v\bigr)_2+b|c|\mu^2\bigl(\D\Lambda^s\vv v\,|\,(1-\epsilon\z)\D\Lambda^s\vv v\bigr)_2\\
&\quad+(b-c)\mu\bigl(\na\Lambda^s\vv v\,|\,(1-\epsilon\z)\na\Lambda^s\vv v\bigr)_2-(b-c)\epsilon\mu\bigl((\na\z\cdot\na)\Lambda^s\vv v\,|\,\Lambda^s\vv v\bigr)_2\}
\end{aligned}\eeno
which along with \eqref{ansatz for amplitude} and \eqref{interpolation} implies that
\beno
A_{21}+(b-c)\epsilon\mu\bigl((\na\z\cdot\na)\Lambda^s\vv v\,|\,\Lambda^s\vv v\bigr)_2\sim\|\Lambda^s\vv v\|_{L^2}^2+\mu^2\|\D\Lambda^s\vv v\|_{L^2}^2+\mu\|\na\Lambda^s\vv v\|_{L^2}^2\sim\|\vv v\|_{X^s_{\mu^2}}^2.
\eeno
Using \eqref{ansatz for amplitude}, Sobolev inequality and H\"older inequality, we have
\beno
\epsilon\mu|\bigl((\na\z\cdot\na)\Lambda^s\vv v\,|\,\Lambda^s\vv v\bigr)_2|\leq\mu^{\f12}\epsilon\|\na\z\|_{L^\infty}\cdot\mu^{\f12}\|\na\Lambda^s\vv v\|_{L^2}\|\Lambda^s\vv v\|_{L^2}
\leq\mu^{\f12}\epsilon^{\f12}\bigl(\mu\|\na\Lambda^s\vv v\|_{L^2}^2+\|\Lambda^s\vv v\|_{L^2}^2\bigr).
\eeno
Then by virtue of \eqref{interpolation}, we have
\beq\label{A 4}
(1-\mu^{\f12}\epsilon^{\f12})\|\vv v\|_{X^s_{\mu^2}}^2\lesssim A_{21}\lesssim(1+\mu^{\f12}\epsilon^{\f12})\|\vv v\|_{X^s_{\mu^2}}^2.
\eeq

Due to \eqref{sigma 1}, we have
\beno\begin{aligned}
&A_{24}\gtrsim\mu\|\na(1-b\mu\D)^{\f12}(1+c\mu\D)^{\f12}\Lambda^s\vv v\|_{L^2}^2
\gtrsim\|\vv v\|_{X^s_{\mu^3}}^2-\mu\|\vv v\|_{X^s_{\mu^2}}^2,\\
&A_{24}\lesssim\mu\|(1-b\mu\D)^{\f12}(1+c\mu\D)^{\f12}\Lambda^s\vv v\|_{H^1}^2\lesssim\|\vv v\|_{X^s_{\mu^3}}^2,
\end{aligned}\eeno
which implies
\beq\label{A 5}
\|\vv v\|_{X^s_{\mu^3}}^2-\mu\|\vv v\|_{X^s_{\mu^2}}^2\lesssim A_{24}\lesssim \|\vv v\|_{X^s_{\mu^3}}^2.
\eeq
Similarly, using \eqref{interpolation} and \eqref{sigma 1}, we have
\beq\label{A 6}
\mu^{\f12}\||D|^{\f12}\vv v\|_{X^s_{\mu^2}}^2\lesssim A_{23}\lesssim\|\vv v\|_{X^s_{\mu^3}}^2.
\eeq

Using \eqref{interpolation}, we have
\beno
A_{22}\lesssim\|\vv v\|_{X^s_{\mu^3}}^2,
\eeno
which along with \eqref{A 3}, \eqref{A 4}, \eqref{A 5} and \eqref{A 6} implies
\beq\label{A 7}
\|\vv v\|_{X^s_{\mu^3}}^2+(1-\mu^{\f12}\epsilon^{\f12}-\mu)\|\vv v\|_{X^s_{\mu^2}}^2\lesssim A_2\lesssim\|\vv v\|_{X^s_{\mu^3}}^2.
\eeq

For $A_3$, $A_4$ and $A_5$, using \eqref{g 1} and \eqref{g 2}, we have
\beno
|A_3|+|A_4|+|A_5|\lesssim(1+\epsilon\|\vv v\|_{L^\infty})\epsilon\|\vv v\|_{L^\infty}\bigl(\|\z\|_{X^s_{\mu^2}}^2+\|\vv v\|_{X^s_{\mu^2}}^2\bigr),
\eeno
which along with \eqref{ansatz for amplitude} and Sobolev inequality implies
\beq\label{A 8}
|A_3|+|A_4|+|A_5|\lesssim\sqrt\epsilon\bigl(\|\z\|_{X^s_{\mu^3}}^2+\|\vv v\|_{X^s_{\mu^3}}^2\bigr).
\eeq

Thanks to \eqref{A 1}, \eqref{A 2}, \eqref{A 7} and \eqref{A 8}, we have
\beq\label{A 9}
(1-\mu-\sqrt\epsilon)\bigl(\|\z\|_{X^s_{\mu^3}}^2+\|\vv v\|_{X^s_{\mu^3}}^2\bigr)\lesssim E_s(\vv V)\lesssim\|\z\|_{X^s_{\mu^3}}^2+\|\vv v\|_{X^s_{\mu^3}}^2.
\eeq
 Taking $\epsilon$ and $\mu$ sufficiently small, we deduce from \eqref{A 9} that
\beq\label{A 10}
E_s(\vv V)\sim\|\z\|_{X^s_{\mu^3}}^2+\|\vv v\|_{X^s_{\mu^3}}^2.
\eeq
This is exactly \eqref{equivalent 1}.

\medskip

{\bf 2. Proof of \eqref{equivalent 2}.} By the definition of $E_s(\vv V)$ in \eqref{E 1} and the expression of $S_{\vv V}(D)$ in \eqref{S for b neq d}, noticing that in this case $g(D)=1-b\mu\D$, we  have
\beq\label{A 11}\begin{aligned}
&E_s(\vv V)=(1-\gamma)^2\gamma^2\|(1-b\mu\D)(1+c\mu\D)\Lambda^s\z\|_{L^2}^2\\
&\quad+\gamma(1-\gamma)\bigl((1-b\mu\D)\Lambda^s\vv v\,|\,(A(D)-\epsilon\z)(1+c\mu\D)\Lambda^s\vv v\bigr)_2\\
&\quad-\epsilon\gamma(1-\gamma)\bigl((1-b\mu\D)^2\Lambda^s\z\,|\,\vv v\cdot(1+c\mu\D)\Lambda^s\vv v\bigr)_2\\
&\quad-\epsilon\gamma(1-\gamma)\bigl((1-b\mu\D)\Lambda^s\vv v\,|\,(1-b\mu\D)\bigl(\vv v(1+c\mu\D)\Lambda^s\z\bigr)\bigr)_2\\
&\quad-b\mu\epsilon^2\sum_{i,j=1,2}\bigl((1-b\mu\D)\Lambda^sv^i\,|\,v^iv^j\D\Lambda^sv^j\bigr)_2\eqdefa A_1+A_2+A_3+A_4+A_5.
\end{aligned}\eeq
Following similar derivation as \eqref{equivalent 1}, under the assumption \eqref{ansatz for amplitude}, we have
 \beno\begin{aligned}
&A_1\sim\|\z\|_{X^s_{\mu^4}}^2,\quad\|\vv v\|_{X^s_{\mu^3}}^2+(1-\mu^{\f12}\epsilon^{\f12}-\mu)\|\vv v\|_{X^s_{\mu^2}}^2\lesssim A_2\lesssim\|\vv v\|_{X^s_{\mu^3}}^2\\
&|A_3|+|A_4|+|A_5|\lesssim\sqrt\epsilon\bigl(\|\z\|_{X^s_{\mu^4}}^2+\|\vv v\|_{X^s_{\mu^3}}^2\bigr)
\end{aligned}\eeno

Then for sufficiently small $\epsilon$ and $\eta$, we have
\beno
E_s(\vv V)\sim\mathcal{E}_s{(t)}\eqdefa\|\z(t)\|_{X^s_{\mu^4}}^2+\|\vv v(t)\|_{X^s_{\mu^{3}}}^2.
\eeno
This is exactly \eqref{equivalent 2}.

\medskip

{\bf 3. Proof of \eqref{equivalent 3}.} By the definition of $E_s(\vv V)$ in \eqref{E 3} and the expression of $S_{\vv V}(D)$ in \eqref{S for b=d>0}, we  have
\beq\label{A 12}\begin{aligned}
&E_s(\vv V)=(1-\gamma)\|(1+c\mu\D)^{\f12}\Lambda^s\z\|_{L^2}^2
+\bigl(\Lambda^s\vv v\,|\,(A(D)-\epsilon\z)\Lambda^s\vv v\bigr)_2\\
&\quad-\epsilon\bigl(\Lambda^s\z\,|\,\vv v\cdot\Lambda^s\vv v\bigr)_2-\epsilon\bigl(\Lambda^s\vv v\,|\,\vv v\Lambda^s\z\bigr)_2.
\end{aligned}\eeq
After similar derivation as \eqref{equivalent 1} and \eqref{equivalent 2}, we get that \eqref{equivalent 3} holds for sufficiently small $\epsilon$ and $\eta$.

\vspace{0.5cm}
\noindent {\bf Acknowledgments.} The first author was partially supported by the ANR Project ANuI. The work
of the second author was partially supported by NSF of China under grants 11671383 and
by an innovation grant from National Center for Mathematics and Interdisciplinary Sciences.

\end{document}